\numberwithin{equation}{section}
\newcommand{\N}{\mathbb{N}}
\newcommand{\Z}{\mathbb{Z}}
\newcommand{\Q}{\mathbb{Q}}
\newcommand{\F}{\mathcal{F}}
\newcommand{\Ch}{\mathcal{C}}
\newcommand{\R}{\mathbb{R}}
\newcommand{\PR}{\mathbb{P}}
\newcommand{\ESP}{\mathbb{E}}
\newcommand{\sas}{\mathcal{S}\alpha\mathcal{S}}
\newcommand{\Za}[1]{\mathrm{Z}_{\alpha} \left( {#1} \right) }
\newcommand{\LL}{\mathrm{L}^2}
\newcommand{\Lun}{\mathrm{L}^1}
\newcommand{\La}{\mathrm{L}^{\alpha}}
\newcommand{\Li}{\mathrm{L}^{\infty}}
\newtheorem{Theo}{Theorem}[section]
\newtheorem{Lem}{Lemma}[section]
\newtheorem{Prop}{Proposition}[section]
\newtheorem{Rem}{Remark}[section]
\def\o{\omega}
\def\O{\Omega}
\def\al{\alpha}
\def\ga{\gamma}
\newcommand{\supp}{\mathrm{supp}}
\def\sign{\mathrm{sgn }}
\def\al{\alpha}
\def\tcd{Dominated Convergence Theorem}
\def\cov{\mathrm{cov}}
\def\var{\mathrm{Var}}
\def\nor{\mathrm{nor}}
\title{Linear Multifractional Stable Motion: wavelet estimation of $H(\cdot)$ and $\al$ parameters}
\author{Antoine Ayache \\UMR CNRS 8524, Laboratoire Paul Painlev\'e, B\^at. M2\\
  Universit\'e Lille 1\\ 59655 Villeneuve d'Ascq Cedex, France\\
E-mail: \texttt{Antoine.Ayache@math.univ-lille1.fr}\\
\ 
\and
 Julien Hamonier \footnote{Corresponding author}\\
  FR CNRS 2956,  LAMAV, \\
  Institut des Sciences et Techniques de Valenciennes,\\
  Universit\'e de Valenciennes et du Hainaut Cambr\'esis, \\
  F-59313 - Valenciennes Cedex 9, France\\
E-mail: \texttt{Julien.Hamonier@univ-valenciennes.fr}
}
\begin{document}

\maketitle

\begin{abstract}
Linear Fractional Stable Motion (LFSM) of Hurst parameter $H$ and of stability parameter $\al$, is one of the most classical extensions of the well-known Gaussian Fractional Brownian Motion (FBM), to the setting of heavy-tailed stable distributions \cite{SamTaq,EmMa}. In order to overcome some limitations of its areas of application, coming from  stationarity of its increments as well as constancy over time of its self-similarity exponent, Stoev and Taqqu introduced in \cite{stoev2004stochastic} an extension of LFSM, called Linear Multifractional Stable Motion (LMSM), in which the Hurst parameter becomes a function $H(\cdot)$ depending on the time variable $t$. Similarly to LFSM, the tail heaviness of the marginal distributions of LMSM is determined by $\al$; also, under some conditions, its self-similarity is governed by $H(\cdot)$ and its path roughness is closely related to $H(\cdot)-1/\al$. Namely, it was shown in \cite{stoev2004stochastic} that $H(t_0)$ is the self-similarity exponent of LMSM at a time $t_0\neq 0$; moreover, very recently, it was established in \cite{hamonier2012lmsm}, that the quantities $\min_{t\in I} H(t)-1/\al$, and $H(t_0)-1/\al$, are respectively the uniform H\"older exponent of LMSM on a compact interval $I$, and its local H\"older exponent at $t_0$. 

The main goal of our article, is to construct, using wavelet coefficients of LMSM, strongly consistent (i.e. almost surely convergent) statistical estimators of $\min_{t\in I} H(t)$, $H(t_0)$, and $\al$; our estimation results, are obtained when $\al\in (1,2)$, and, $H(\cdot)$ is a H\"older function smooth enough, with values 
in a compact subinterval $[\underline{H},\overline{H}]$ of $(1/\al,1)$.
\end{abstract}

\medskip

      {\it Running head}:  Statistical inference for multifractional stable process  \\

      {\it AMS Subject Classification}: 60G22, 60G52, 60M09.\\

      {\it Key words:} Stable stochastic processes, Wavelet coefficients, 
    H\"older regularity, local self-similarity, laws of large numbers.

\section{Introduction and statement of the main results}
\label{sec:Introestim}
Let $\al \in (0,2)$ \footnote{Notice that in the sequel we will restrict to $\al\in (1,2)$.} and $\{\Za{s}\,:\, s\in\R\}$ a symmetric $\al$-stable ($\sas$) L\'evy process with c\`adl\`ag paths defined on a probability space $(\O,\F,\PR)$, chosen\footnote{A careful inspection of the article \cite{hamonier2012lmsm}, shows that such a choice for $\O$, is possible.}, without loss of generality, in such a way that results concerning the typical path behavior of the continuous versions, defined below, of the Linear Multifractional Stable Motion (LMSM) $\{Y(t)\,:\, t\in\R\}$ and the $\sas$ random field $\{X(u,v):(u,v)\in \R\times (1/\al,1)\}$ generating it, hold, not only for almost all paths but also for all of them, in other words for every $\o\in\O$. 

Linear Fractional Stable Motion (LFSM) of stability parameter $\al$ and Hurst parameter $H\in (0,1)$, is the self-similar $\sas$ process with stationary increments, 
denoted\footnote{Even if $\al$ is an important parameter of LFSM, for the sake of simplicity, we prefer to denote this process by $\{X_{H}(t)\,:\, t\in\R\}$ instead of $\{X_{\al,H}(t)\,:\, t\in\R\}$.} by $\{X_{H}(t)\,:\, t\in\R\}$, and defined, for each $t\in\R$, as the $\sas$ stochastic integral:
\begin{equation}
\label{aeq:defLFSM}
X_{H}(t)=\int_{\R} \left\{(t-s)_{+}^{H-1/\alpha} - (-s)_{+}^{H-1/\alpha}\right\} \Za{ds},
\end{equation}
where, for all real numbers $x$ and $\kappa$, 
\begin{equation}
\label{h:eq:pospart}
(x)_+^{\kappa}=
\left\{
\begin{array}{l}
x^{\kappa}, \,\mbox{if $x\in (0,+\infty)$,}\\
\\
0,\, \mbox{else.}
\end{array}
\right.
\end{equation}
Roughly speaking, it can be viewed, as a fractional primitive or a fractional derivative of $\{\Za{s}\,:\, s\in\R\}$, depending on whether $H-1/\al>0$ or $H-1/\al<0$; on one hand, it reduces to the latter L\'evy process when $H=1/\al$, on the other hand, it becomes the usual Gaussian Fractional Brownian Motion (FBM), when $\al=2$. Thus, LFSM is one of the most natural extensions of FBM, to the setting of heavy-tailed stable distributions; two classical references on it and many other stochastic models with infinite variance are \cite{SamTaq,EmMa}. On one hand, similarly to $\{\Za{s}\,:\, s\in\R\}$, the tail heaviness of the marginal distributions of LFSM, is determined by $\al$; on the other hand, similarly to FBM, the self-similarity property of the finite-dimensional distributions of LFSM is governed by $H$; more precisely, for each fixed positive real number $a$, one has, 
$$
\{X_{H}(at)\,:\,t\in\R\} \stackrel{\mbox{{\tiny fdd}}}{=} \{a^H X_{H}(t)\,:\,t\in\R\}, 
$$
where $\stackrel{\mbox{{\tiny fdd}}}{=}$ means that the equality holds in the sense of the finite-dimensional distributions; therefore, the self-similarity exponent of LFSM is, at any time $t_0$, equal to $H$.

The fact that LFSM has stationary increments and a self-similarity exponent constant in time, restricts its areas of application: many real-life signals fail to satisfy these two properties. Therefore, Stoev and Taqqu \cite{stoev2004stochastic} have introduced a non stationary increments extension of LFSM called Linear Multifractional Stable Motion (LMSM), which, roughly speaking, consists in making the Hurst parameter of LFSM, to be dependent on the time variable $t$. More precisely, LMSM, denoted by $\{Y(t)\,:\,t\in\R\}$, is defined, for each $t\in\R$, as,
\begin{equation}\label{m:def:lmsm}
Y(t)=X(t,H(t)),
\end{equation}
where $\{X(u,v):(u,v)\in \R\times (0,1)\}$, is the $\sas$ random field, such that for every $(u,v)\in \R\times (0,1)$,
\begin{equation}
\label{h:def:champX}
X(u,v)=\int_{\R}\Big\{ (u-s)_+^{v-1/\alpha} - (-s)_+^{v-1/\alpha} \Big\} \Za{ds}.
\end{equation}
Notice that for each fixed $H\in (0,1)$, the process $X(\cdot,H)=\{X(t,H)\,:\, t\in\R\}$ is $X_H$ the LFSM of Hurst parameter $H$. Also notice that, when $\al=2$, then the process $\{Y(t)\,:\,t\in\R\}$ reduces to the Gaussian Multifractional Brownian Motion (MBM), which is the most known multifractional process. LMSM, indeed provides a model whose self-similarity exponent depends on time and thus may change over time; namely, as shown in \cite{stoev2004stochastic}, when $H(t)-H(t_0)=o\big(|t-t_0|^{H(t_0)}\big)$ at some fixed $t_0\ne 0$,
then the LMSM $\{Y(t)\,:\,t\in\R\}$, is at $t_0$, locally asymptotically self-similar of exponent $H(t_0)$, moreover the tangent process (this notion was introduced and studied in \cite{falconer2002tangent,falconer2003local}) is the LFSM $X(\cdot,H(t_0))$; more precisely, one has, 
\begin{equation}
\label{eqa:sseM}
\left\{ a^{-H(t_0)} (Y(t_0+a u)-Y(t_0)) \,:\,u\in\R\right\} \xrightarrow[a \rightarrow 0,\,a>0]{\mbox{{\tiny fdd}}} \{ X(u,H(t_0))\,:\,u\in\R\}. 
\end{equation}

Let us now make a few recalls concerning path continuity and roughness of LMSM. Observe that the fact that LFSMs with $H\le 1/\al$ do not have a version with continuous paths (see \cite{SamTaq,EmMa}), clearly implies that the field $\{X(u,v):(u,v)\in \R\times (0,1)\}$ itself, cannot have such a version. Yet, it has been shown in \cite{hamonier2012lmsm}, that when $\al$ belongs\footnote{From now on, we assume that $\al\in (1,2)$.} to $(1,2)$ and $(u,v)$ is restricted to $\R\times (1/\al,1)$, then one can construct, through random wavelet-type series, a version of $\{X(u,v):(u,v)\in \R\times (1/\al,1)\}$, also denoted by $\{X(u,v):(u,v)\in \R\times (1/\al,1)\}$, whose paths are continuous functions and satisfy several other nice properties; a quite useful one among them, is that, for all fixed compact intervals $\mathcal{H}\subset (1/\al,1)$ and $\mathcal{I}\subset\R$, the paths are Lipschitz functions with respect to $v\in\mathcal{H}$, uniformly in $u\in\mathcal{I}$, namely one has,
\begin{equation}
\label{eqa:uniflipX}
\sup\left\{\frac{\big|X(u,v_1)-X(u,v_2)\big|}{|v_1-v_2|}\,:\, u\in\mathcal{I}\mbox{ and } (v_1,v_2)\in \mathcal{H}^2\right\}<+\infty, 
\end{equation}
with the convention that $0/0=0$. In view of (\ref{m:def:lmsm}), it is clear that the LMSM $\{Y(t)\,:\,t\in\R\}$ has continuous paths, as long as its parameter $H(\cdot)$ is a continuous function with values in $(1/\al,1)$; this will be the case in all the sequel, moreover, for the sake of simplicity, we will even assume that the range of $H(\cdot)$ is included in a compact subinterval of $(1/\al,1)$, denoted by $[\underline{H},\overline{H}]$. We now turn to path roughness of LMSM, it can classically be measured through H\"older exponents. Recall that for each compact interval $I\subset\R$ with non-empty interior, and every $\ga\in [0,1]$, the H\"older space $\Ch^{\ga}(I)$ is defined as, 
$$
\Ch^{\ga}(I)=\left\{f:I\rightarrow\R\,:\,\sup_{t_1,t_2\in I}\frac{|f(t_1)-f(t_2)|}{|t_1-t_2|^{\ga}}<+\infty\right\}.
$$
Also recall that $\rho_{g}^{\mbox{{\tiny unif}}}(I)$ the uniform (or global) H\"older exponent over $I$, of a continuous function $g:\R\rightarrow\R$, is defined as, 
\begin{equation}
\label{aeq:defuhe}
\rho_{g}^{\mbox{{\tiny unif}}}(I)=\sup\left\{\ga\in [0,1]: g\in \Ch^{\ga}(I)\right\},
\end{equation}
and, $\rho_{g}^{\mbox{{\tiny unif}}}(t_0)$, its local (or uniform pointwise) H\"older exponent at an arbitrary $t_0\in\R$, is defined as,
\begin{equation}
\label{aeq:deflhe} 
\rho_{g}^{\mbox{{\tiny unif}}}(t_0)=\sup\left\{\rho_{g}^{\mbox{{\tiny unif}}}\big([M_1,M_2]\big): M_1\in\R,\, M_2\in\R \mbox{ and } M_1<t_0<M_2\right\}.
\end{equation}
In the case where $g$ is a LMSM path, these two exponents are respectively denoted by $\rho_{Y}^{\mbox{{\tiny unif}}}(I)$ and $\rho_{Y}^{\mbox{{\tiny unif}}}(t_0)$.
It has been shown in \cite{hamonier2012lmsm} that, under the condition, 
\begin{equation}
\label{aeq:condYuhe}
\rho_{H}^{\mbox{{\tiny unif}}}(I)>1/\al,
\end{equation}
one has 
\begin{equation}
\label{aeq:valYuhe}
\rho_{Y}^{\mbox{{\tiny unif}}}(I)=\min_{t\in I} H(t)-1/\al,
\end{equation}
and, under the condition, 
\begin{equation}
\label{aeq:condYlhe}
\rho_{H}^{\mbox{{\tiny unif}}}(t_0)>1/\al,
\end{equation}
one has 
\begin{equation}
\label{aeq:valYlhe}
\rho_{Y}^{\mbox{{\tiny unif}}}(t_0)=H(t_0)-1/\al.
\end{equation}
The equalities (\ref{aeq:valYuhe}) and (\ref{aeq:valYlhe}), show that the quantities, $\min_{t\in I} H(t)-1/\al$, and $H(t_0)-1/\al$, provide important information concerning global and local path roughness of LMSM; moreover, as we already pointed it out (see (\ref{eqa:sseM})), $H(t_0)$ is its self-similarity exponent at $t_0$, and $\al$ determines tail heaviness of its marginal distributions. Therefore, it seems natural to look for statistical estimation procedures of $\min_{t\in I} H(t)$, $H(t_0)$ and $\al$; this is what we intend to do in our article, by making use of wavelet coefficients of LMSM. Let us mention that in the case of LFSM, the problem of the estimation of the constant Hurst parameter, has already been studied in several works (see for example \cite{abry1999estimation,delbeke98,delbeke2000stochastic,pipiras2007bounds,SPT2002}) 
and strongly consistent wavelet estimators have been obtained; yet, this problem becomes more tricky, in the more general case of LMSM, because the Hurst parameter becomes changing with time and the increments of the process are no longer stationary. Also it is worth mentioning that, recently, using an approach different from us, Le Gu\'evel \cite{leguevel2010}, has been able to construct, for a class of multistable processes which includes LMSM,  statistical estimators of their time changing Hurst and stability parameters; however, it is not clear that the latter estimators be strongly consistent (almost surely convergent) and how they may allow to estimate $\min_{t\in I} H(t)$. Before ending this paragraph, let us mention that in the case of Gaussian MBM, strongly consistent estimators of $\min_{t\in I} H(t)$ and $H(t_0)$, have been first obtained in \cite{benassi1998identifying}, through generalized quadratic variations of this process, and by imposing to its parameter $H(\cdot)$ to be a $C^1$ function. After this first article on this issue, the works on it, have focused on the estimation of $H(t_0)$. Many results of \cite{benassi1998identifying}, have been sharpened in \cite{coeurjolly2005identification,coeurjolly2006erratum}; among other things, it has been shown that it is enough to impose to $H(\cdot)$ to be a H\"older function with an exponent strictly bigger than all the values of $H(\cdot)$. An extensive study on the estimation of $H(t_0)$, in a more general Gaussian setting than that of MBM, has been done in \cite{bardet2010nonparametric}; among other things, it shows that the new increment ratio method can be an alternative to the classical quadratic variations one. A wavelet estimator of this quantity in the framework of MBM, has been given in \cite{ayache2011prostate}, and has turned out to be efficient in detection of prostate cancer tumors on mr images. Last but not least, the estimation of $H(t_0)$ in a setting of noisy MBM, has been studied in \cite{peng11}.

In order to state the two main results of our article, one needs to introduce some notations.

\begin{itemize}
\item[$\bullet$] The sequence of the $\sas$ random variables $\{d_{j,k}\,:\,(j,k)\in\Z^2\}$, denotes the wavelet coefficients of the LMSM $\{Y(t)\,:\,t\in\R\}$; they are defined by,
\begin{equation}\label{m:djk}
d_{j,k}=2^j \int_{\R} Y(t) \psi(2^jt-k) dt=2^j\int_{\R} X(t,H(t)) \psi(2^jt-k) dt,
\end{equation}
the last equality results from (\ref{m:def:lmsm}). It is worth noticing that, one only imposes to the analyzing wavelet $\psi$ three weak assumptions:
\begin{enumerate}
\item it is not the zero function, 
\item it is continuous on $\R$, and vanishes outside of the interval $[0,1]$, in other words,
\begin{equation}\label{m:eq:support}
\mathrm{supp}\, \psi \subseteq [0,1],
\end{equation}
\item its first two moments vanish, i.e. 
\begin{equation}\label{m:2moments}
\int_{\R} \psi(s) ds = \int_{\R} s\psi(s) ds = 0.
\end{equation}
\end{enumerate}
Let us underline that there is no need that $\{2^{j/2} \psi(2^j\cdot-k): (j,k)\in\Z^2\}$ be an orthonormal wavelet basis of $\LL(\R)$, or even a frame in this space; information on orthonormal or biorthogonal wavelet bases, as well as on frames, can be found in \cite{Dau92}, for instance.
\item[$\bullet$] $\{I_j\,;\,j\in\Z_+\}$ is an arbitrary non-increasing\footnote{That is, $I_{j}\subseteq I_{j-1}$, for all $j\in\N$.} (in the sense of the inclusion) sequence of compact intervals, satisfying for each $j\in\Z_+$,
\begin{equation}\label{h:cond2:Ij}
|I_j|=\mathrm{diam}(I_j) =\sup\big\{|x_1-x_2|\,:\, (x_1,x_2)\in I_{j}^2\big\}\ge 2^{1-j/2}.
\end{equation}
\item[$\bullet$] For all $j\in\Z_+$, one can sets,  
\begin{equation}\label{h:def1:Hj}
\underline{H}_j=\min_{t\in I_j} H(t);
\end{equation}
in view of the continuity of $H(\cdot)$, and of the compactness of $I_j$, this definition of $\underline{H}_j$ makes sense, moreover, there exists, at least one $\mu_j\in I_j$, such that,
\begin{equation}\label{h:def2:Hj}
H(\mu_j)=\underline{H}_j.
\end{equation}
\item[$\bullet$] At last, $\{\nu_j \,;\,j\in\Z_+ \}$ denotes the sequence of the finite sets of indices $k$, defined by, 
\begin{equation}\label{h:def:nuj}
\nu_j=\Big\{ k\in\Z : \big[k2^{-j},(k+1)2^{-j}\big]\subseteq I_j \Big\};
\end{equation}
moreover, $n_j$ is the number of the elements of $\nu_j$, namely,
\begin{equation}\label{h:def:cardnuj}
n_j=\mathrm{card }(\nu_j).
\end{equation} 
Observe that, in view of (\ref{h:cond2:Ij}),  for all $j\in\Z_+$, one has 
\begin{equation}
\label{h:cond1:nj}
n_j\ge \big[2^{j/2}\big]\ge 1, 
\end{equation}
where $[\cdot]$ denotes the integer part function.
\end{itemize}
$ $

Now, we are in position to state the two main results of this article.
\begin{Theo}\label{h:thm:main1}
Let $\beta \in (0,\al/4)$ be arbitrary and fixed.  For all $j\in\Z_+$, one denotes by $V_j$, the empirical mean, of order $\beta$, defined as, 
\begin{equation}\label{h:def:VJ}
V_j=\frac{1}{n_j} \sum_{k\in\nu_j} |d_{j,k}|^{\beta}.
\end{equation}
Assume that there exists $j_0\in\Z_+$ such that $\rho_{H}^{\mbox{{\tiny unif}}}(I_{j_0})$, the uniform H\"older exponent of $H(\cdot)$ over $I_{j_0}$, satisfies,
\begin{equation}
\label{Am:cond:pholder1}
\rho_{H}^{\mbox{{\tiny unif}}}(I_{j_0}) >\max_{t\in I_{j_0}} H(t).
\end{equation}
Then, one has, almost surely,
\begin{equation}\label{h:main:equation}
\lim_{j\rightarrow + \infty} \Bigg| \frac{\log_2(V_j)}{-j\beta} - \underline{H}_j \Bigg|=0.
\end{equation}
\end{Theo}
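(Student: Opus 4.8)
The plan is to analyze $V_j$ by decomposing each wavelet coefficient $d_{j,k}$ into a ``dominant'' part coming from the frozen-field value at the Hurst parameter $H(\mu_j)=\underline{H}_j$, plus a correction term that is negligible at the logarithmic scale we care about. Concretely, since $d_{j,k}=2^j\int_\R X(t,H(t))\psi(2^jt-k)\,dt$ and $\psi$ is supported in $[0,1]$, the coefficient $d_{j,k}$ only sees the field on the dyadic interval $[k2^{-j},(k+1)2^{-j}]$. On such an interval of length $2^{-j}$, the oscillation of $H(\cdot)$ is controlled by the H\"older hypothesis (\ref{Am:cond:pholder1}): with $\rho_H^{\mathrm{unif}}(I_{j_0})>\max_{t\in I_{j_0}}H(t)$, we have $|H(t)-\underline H_j|\lesssim 2^{-j\rho_H^{\mathrm{unif}}(I_{j_0})}$ for $t$ in the relevant interval. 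I would first substitute $H(t)$ by the constant $\underline H_j$ inside the integral, writing
\begin{equation}
d_{j,k}=2^j\int_\R X(t,\underline H_j)\psi(2^jt-k)\,dt + R_{j,k},
\end{equation}
where the remainder $R_{j,k}$ is controlled using the uniform Lipschitz property (\ref{eqa:uniflipX}) of $v\mapsto X(u,v)$: indeed $|X(t,H(t))-X(t,\underline H_j)|\le C\,|H(t)-\underline H_j|\lesssim 2^{-j\rho_H^{\mathrm{unif}}(I_{j_0})}$, so $|R_{j,k}|\lesssim 2^{-j\rho_H^{\mathrm{unif}}(I_{j_0})}$ uniformly in $k\in\nu_j$.

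Next I would identify the leading term $e_{j,k}:=2^j\int_\R X(t,\underline H_j)\psi(2^jt-k)\,dt$ as a wavelet coefficient of the \emph{LFSM} $X(\cdot,\underline H_j)=X_{\underline H_j}$, which is self-similar of exponent $\underline H_j$ and has stationary increments. By self-similarity and stationarity, the $\{e_{j,k}\}_{k\in\nu_j}$ form, up to the deterministic scaling factor $2^{-j\underline H_j}$, a stationary $\sas$ sequence whose marginal law does not depend on $j$; more precisely $e_{j,k}\stackrel{d}{=}2^{-j\underline H_j}\,e_{0,0}^{(\underline H_j)}$, where $e_{0,0}^{(\underline H_j)}$ is a fixed $\sas$ variable depending only on $\underline H_j$ and $\psi$. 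Raising to the power $\beta\in(0,\al/4)$ (chosen below $\al$ so that the $\beta$-moments are finite, since $\sas$ variables have finite $p$-moments exactly for $p<\al$), the empirical mean behaves like $2^{-j\beta\underline H_j}$ times an average of identically distributed terms. The core probabilistic step is then a strong law of large numbers: I would show
\begin{equation}
\frac{1}{n_j}\sum_{k\in\nu_j}\big|2^{j\underline H_j}e_{j,k}\big|^{\beta}\longrightarrow \ESP\big|e_{0,0}^{(\underline H_j)}\big|^{\beta}\quad\text{a.s.},
\end{equation}
uniformly enough in the (slowly varying) parameter $\underline H_j$, using $n_j\ge[2^{j/2}]\to\infty$ from (\ref{h:cond1:nj}) and moment bounds at order $2\beta<\al/2<\al$ to get a Borel--Cantelli / fourth-moment style control of the fluctuations.

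Assembling the pieces, $V_j\approx 2^{-j\beta\underline H_j}\cdot(\text{bounded, bounded-away-from-}0\text{ factor})$, so $\log_2(V_j)=-j\beta\underline H_j+O(1)+o(j)$ after verifying that the remainder $R_{j,k}$ does not alter the exponent: here the gain is the exponent gap $\rho_H^{\mathrm{unif}}(I_{j_0})>\max_t H(t)\ge\underline H_j$, which forces the correction to decay strictly faster than the main term, so it disappears upon dividing by $-j\beta$. Dividing by $-j\beta$ and letting $j\to\infty$ yields (\ref{h:main:equation}). The main obstacle I anticipate is the uniformity of the strong law across the drifting parameter $\underline H_j$: since $\underline H_j$ changes with $j$, one cannot invoke a single fixed-sequence SLLN, and one must instead establish almost-sure convergence with explicit, $\underline H_j$-uniform moment and tail estimates—controlling both the heavy $\sas$ tails (through the restriction $\beta<\al/4$, ensuring even $\lfloor\cdot\rfloor$-moments up to order $2\beta$ are finite) and the dependence structure of the wavelet coefficients of LFSM. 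Handling the interplay between this uniform SLLN and the deterministic exponent $\underline H_j$, together with propagating the negligibility of $R_{j,k}$ through the nonlinear map $x\mapsto|x|^\beta$ (via the elementary inequality $\big||a+b|^\beta-|a|^\beta\big|\le|b|^\beta$ valid for $\beta\le 1$, or a mean-value bound otherwise), is the technical heart of the argument.
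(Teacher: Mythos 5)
There is a genuine gap, and it lies at the very first step: you freeze the Hurst parameter at $\underline H_j=\min_{t\in I_j}H(t)$, whereas the only substitution that the H\"older hypothesis actually controls is the \emph{local} one, at $H(k2^{-j})$. Since $\psi$ is supported in $[0,1]$, the coefficient $d_{j,k}$ only sees $H(\cdot)$ on $[k2^{-j},(k+1)2^{-j}]$, and on that interval the oscillation of $H$ around $H(k2^{-j})$ is indeed $O(2^{-j\rho_H})$. But your claim that $|H(t)-\underline H_j|\lesssim 2^{-j\rho_H^{\mathrm{unif}}(I_{j_0})}$ on that interval is false in general: $\underline H_j$ is attained at some $\mu_j\in I_j$ which may be far from $k2^{-j}$, and the theorem only imposes the \emph{lower} bound $|I_j|\ge 2^{1-j/2}$ on the diameters --- in the main application (estimation of $\min_{t\in I}H(t)$, Part~3 of Remark~\ref{Remh:thm:main1}) the $I_j$ are a fixed interval $I$ for all large $j$. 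In that case $|H(k2^{-j})-\underline H_j|$ is of order $1$ for most $k\in\nu_j$, the Lipschitz-in-$v$ bound (\ref{eqa:uniflipX}) gives $|R_{j,k}|\lesssim |H(k2^{-j})-\underline H_j|=O(1)$, and your remainder is not negligible but in fact dominates the leading term $e_{j,k}\asymp 2^{-j\underline H_j}$. Everything downstream (the stationary $\sas$ sequence with a single marginal law, the SLLN, the conclusion $V_j\approx 2^{-j\beta\underline H_j}\cdot O(1)$) collapses with it.

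The paper's proof avoids this by approximating $d_{j,k}$ with $\widetilde d_{j,k}=2^j\int_\R X(t,H(k2^{-j}))\psi(2^jt-k)\,dt$, for which the error really is $O(2^{-j\rho_H})$ uniformly in $k$ (Lemma~\ref{h:lem:ecartdjktdjk}). The price is that the scale parameters $\|\widetilde d_{j,k}\|_\al\asymp 2^{-jH(k2^{-j})}$ now vary with $k$, so $\ESP(\widetilde V_j)\asymp n_j^{-1}\sum_{k\in\nu_j}2^{-j\beta H(k2^{-j})}$ is \emph{not} trivially $\asymp 2^{-j\beta\underline H_j}$; extracting $\underline H_j$ from this sum at the $\log_2/(-j\beta)$ scale is a genuine step (Proposition~\ref{h:lem:control1}), whose lower bound requires counting the indices $k$ with $k2^{-j}$ within $j^{-1/\rho_H}$ of the minimizer $\mu_j$ and using $|I_j|\ge 2^{1-j/2}$ to show they are a $j^{-1/\rho_H}$-fraction of $n_j$. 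This counting argument is entirely absent from your proposal because your (incorrect) decomposition makes all terms carry the exponent $\underline H_j$ from the outset. Your remaining ingredients --- the choice $\beta<\al/4$ so that $|d_{j,k}|^\beta$ has finite second moment, a Borel--Cantelli concentration of $V_j$ around its mean exploiting covariance decay of the wavelet coefficients, and the inequality $\big||a+b|^\beta-|a|^\beta\big|\le|b|^\beta$ to transfer the remainder through $x\mapsto|x|^\beta$ --- do match the paper's strategy, but they cannot rescue the argument unless the substitution point is corrected.
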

 
\begin{Rem}
\label{Remh:thm:main1}
\begin{enumerate}
\item Let $S$ be an arbitrary $\sas$ random variable, the condition $\beta \in (0,\al/4)$, implies that the second order moment of the random variable $|S|^\beta$ is finite; in the case where the parameter $\al\in (1,2)$ is unknown, then $\beta$ will be chosen in the interval $(0,1/4]\subset (0,\al/4)$. Note in passing, that it is possible to choose $\beta$ in the interval $[\al/4,\alpha/2)$, yet, the condition (\ref{h:cond2:Ij}) needs then to be strengthened, in order that Theorem~\ref{h:thm:main1} to be satisfied (more precisely, in order that (\ref{eqa:bc:lfgn1}) still results from (\ref{eqa:bc3}), (\ref{h:maj1:controle:bjlambda}) and (\ref{h:cond1:nj})).  
\item Observe that it follows from (\ref{aeq:defuhe}) and (\ref{Am:cond:pholder1}), that there exist two positive constants $c$ and $\rho_H$, satisfying, \begin{equation}\label{m:cond:holder}
\forall t_1,t_2\in I_{j_0}, \, |H(t_1)-H(t_2)| \le c |t_1-t_2|^{\rho_H},
\end{equation}
and 
\begin{equation}\label{m:cond:pholder}
1\ge \rho_H >\max_{t\in I_{j_0}} H(t).
\end{equation}
\item Assume that $I$ is a compact interval with non-empty interior and such that $\rho_{H}^{\mbox{{\tiny unif}}}(I) >\max_{t\in I} H(t)$. Taking $I_j=I$, for every $j\ge -2\log_2 (2^{-1}|I|)$, and else $I_j=L$, where $L$ is an arbitrary fixed compact interval such that $I\subseteq L$ and $|L|\ge 2$; then, it follows from Theorem~\ref{h:thm:main1}, that,
$
\frac{\log_2(V_j)}{-j\beta},
$
is a strongly consistent estimator (i.e. which converges almost surely ) of $\min_{t\in I} H(t)$.
\item Assume that $t_0$ is such that $\rho_{H}^{\mbox{{\tiny unif}}} (t_0)>H(t_0)$, and that $\{I_j\,;\,j\in\Z_+\}$ is an arbitrary non-increasing sequence, which satisfies (\ref{h:cond2:Ij}) as well as,
$$\{t_0\}=\bigcap_{j\in\Z_+} I_j;$$
then, it follows from Theorem~\ref{h:thm:main1} and from the continuity of $H(\cdot)$, that, 
$
\frac{\log_2(V_j)}{-j\beta},
$
is a strongly consistent estimator (i.e. which converges almost surely ) of $H(t_0)$.
\item  A careful inspection of the proof of Theorem~\ref{h:thm:main1}, shows that it remains valid in the Gaussian setting of Multifractional Brownian Motion (MBM); in this case, $\al=2$, one takes $\beta=2$, and $H(\cdot)$ is allowed to be with values in any compact subinterval of $(0,1)$. Even in this quite classical framework, the theorem provides two interesting results which were unknown so far, namely:
\begin{itemize}
\item it gives a strongly consistent wavelet estimator of $\min_{t\in I} H(t)$;
\item it shows that, it is possible (at least from a theoretical point of view) to estimate $H(t_0)$, even if the rate of convergence to zero, when $j$ goes to $+\infty$, of the diameters of the intervals $I_j$, is arbitrarily slow; in particular there is no need to impose to this rate to be a power function or a logarithmic function of $2^{-j}$, as it has be done in the previous literature.
\end{itemize}
\end{enumerate}
\end{Rem}
 
 \begin{Theo}
\label{m:Th:main}
Assume that $I$ is a compact interval with non-empty interior and such that, 
\begin{equation}
\label{Am:cond:pholder}
\rho_{H}^{\mbox{{\tiny unif}}}(I) >\max_{t\in I} H(t).
\end{equation}
Denote by $\underline{\widehat{H}}_j(I)$ the strongly consistent
estimator of $\min_{t\in I} H(t)$, introduced in Part~3 of Remark~\ref{Remh:thm:main1}; moreover, 
for each fixed positive integer $j\ge -2\log_2 (2^{-1}|I|)$, one sets,
\begin{equation}
\label{eq:eqa:defDg}
D_j=\max\big\{|d_{j,k}|\,:\, [2^{-j}k,2^{-j}(k+1)]\subseteq I\big\}
\end{equation}
and
\begin{equation*}
\label{m:def:estimal}
\widehat{\al}_j=\left(\underline{\widehat{H}}_j(I)+j^{-1}\log_{2} (D_j)\right)^{-1}.
\end{equation*}
Then, one has, almost surely,
\begin{equation*}\label{m:conv:estimal}
\widehat{\al}_j \xrightarrow[j\rightarrow+\infty]{a.s.} \al.
\end{equation*}
\end{Theo}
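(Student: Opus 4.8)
The plan is to reduce the statement to a single almost sure asymptotics for the maxima $D_j$, and then to obtain that asymptotics through two one-sided bounds of very different natures. Write $m:=\min_{t\in I}H(t)$, and note that $m>1/\al$ since $H$ takes its values in a compact subinterval of $(1/\al,1)$. By Part~3 of Remark~\ref{Remh:thm:main1}, assumption (\ref{Am:cond:pholder}) ensures that $\underline{\widehat{H}}_j(I)\to m$ almost surely. Since, by definition, $\widehat{\al}_j^{-1}=\underline{\widehat{H}}_j(I)+j^{-1}\log_2 D_j$, and since $x\mapsto 1/x$ is continuous at $1/\al\neq 0$, it is enough to prove that, almost surely,
\[
\lim_{j\to+\infty}\frac{\log_2 D_j}{j}=\frac1\al-m ;
\]
the two limits then add up to $m+(1/\al-m)=1/\al$, whence $\widehat{\al}_j\to\al$.

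For the upper bound $\limsup_{j}j^{-1}\log_2 D_j\le 1/\al-m$, I would rely on the H\"older regularity of $Y$ recalled in (\ref{aeq:valYuhe}). Assumption (\ref{Am:cond:pholder}) forces $\rho_H^{\mathrm{unif}}(I)>\max_{t\in I}H(t)\ge m>1/\al$, so (\ref{aeq:condYuhe}) holds and $\rho_Y^{\mathrm{unif}}(I)=m-1/\al$; hence, for every fixed $\gamma<m-1/\al$, each path of $Y$ belongs to $\Ch^{\gamma}(I)$. Using $\int_\R\psi=0$ and $\supp\psi\subseteq[0,1]$, the change of variable $t=2^{-j}(u+k)$ in (\ref{m:djk}) gives $d_{j,k}=\int_0^1\big(Y(2^{-j}(u+k))-Y(2^{-j}k)\big)\psi(u)\,du$, so that $|d_{j,k}|\le C\,2^{-j\gamma}$ uniformly over the indices $k$ with $[2^{-j}k,2^{-j}(k+1)]\subseteq I$. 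Thus $D_j\le C\,2^{-j\gamma}$, and letting $\gamma\uparrow m-1/\al$ yields the claim.

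The matching lower bound $\liminf_{j}j^{-1}\log_2 D_j\ge 1/\al-m$ is the core of the proof. Note first that the wavelet characterization of $\rho_Y^{\mathrm{unif}}(I)$ only guarantees large coefficients \emph{infinitely often}, whereas we need a bound valid for \emph{all} large $j$; this gap must be bridged by a probabilistic extreme-value argument. Fix $\eps>\eps'>0$ with $\eps<1/\al$, set $\lambda_j=2^{j(1/\al-m-\eps)}$, pick $\mu\in I$ with $H(\mu)=m$, and use the continuity of $H$ to select a fixed non-degenerate interval $J\subseteq I$ on which $H\le m+\eps'$; then $K_j:=\{k:[2^{-j}k,2^{-j}(k+1)]\subseteq J\}$ satisfies $\#K_j\ge c\,2^{j}$ for large $j$. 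Writing $d_{j,k}=\int_\R g_{j,k}(s)\,\Za{ds}$ with $g_{j,k}(s)=2^j\int_\R\{(t-s)_+^{H(t)-1/\al}-(-s)_+^{H(t)-1/\al}\}\psi(2^jt-k)\,dt$ (stochastic Fubini in (\ref{m:djk})), I split the integral at $s=2^{-j}k$: the local part $B_{j,k}=\int_{2^{-j}k}^{2^{-j}(k+1)}g_{j,k}(s)\,\Za{ds}$ is an $\sas$ variable depending only on the increment of $\{\Za s\}$ over $[2^{-j}k,2^{-j}(k+1)]$, hence independent of $\mathcal F_k:=\sigma(\Za s:s\le 2^{-j}k)$, while the remainder $R_{j,k}=\int_{-\infty}^{2^{-j}k}g_{j,k}(s)\,\Za{ds}$ is $\mathcal F_k$-measurable. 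The key deterministic input is the scale lower bound $\norm{g_{j,k}\ind_{[2^{-j}k,2^{-j}(k+1)]}}_{\al}\ge c'\,2^{-jH(2^{-j}k)}\ge c'\,2^{-j(m+\eps')}$ for $k\in K_j$, which reflects the non-integrable diagonal singularity of $(t-s)_+^{H(t)-1/\al}$ as $s\uparrow t$ and is of the type established in \cite{hamonier2012lmsm}.

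Granting this, I would invoke the anti-concentration property of symmetric stable laws: for any $a\in\R$ and any $\sas$ variable $S$ of scale $\sigma$ one has $\PR(|a+S|>\lambda)\ge\PR(|S|>\lambda)$, and the latter is $\ge C_\al(\sigma/\lambda)^\al$ once $\lambda/\sigma$ is large. Conditioning on $\mathcal F_k$, where $R_{j,k}$ is frozen, therefore gives the \emph{deterministic} bound $\PR(|d_{j,k}|>\lambda_j\mid\mathcal F_k)\ge p_{j,k}:=C_\al\,(c'2^{-j(m+\eps')}/\lambda_j)^\al$, regardless of the dependent remainder. Ordering $K_j$ and peeling off its largest index successively with the tower property — each event $\{|d_{j,k}|\le\lambda_j\}$ being measurable with respect to the next $\sigma$-field of the filtration $(\mathcal F_k)$ — one obtains
\[
\PR\Big(\max_{k\in K_j}|d_{j,k}|\le\lambda_j\Big)\le\prod_{k\in K_j}(1-p_{j,k})\le\exp\Big(-\sum_{k\in K_j}p_{j,k}\Big),
\]
and a direct computation gives $\sum_{k\in K_j}p_{j,k}\ge c''\,2^{j\al(\eps-\eps')}\to+\infty$. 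The right-hand side is thus summable in $j$, so Borel--Cantelli yields $D_j\ge\max_{k\in K_j}|d_{j,k}|>\lambda_j$ for all large $j$, almost surely; letting $\eps\downarrow0$ finishes the lower bound. The step I expect to be the genuine obstacle is the local scale bound of the previous paragraph: because the long-range dependence of LFSM makes the remainder $R_{j,k}$ of the same order $2^{-jH}$ as the whole coefficient, one cannot simply discard it, and it is precisely the anti-concentration inequality that renders the argument insensitive to this dependent part.
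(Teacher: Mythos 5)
Your overall reduction (to $j^{-1}\log_2 D_j\to 1/\al-m$), your upper bound via the uniform H\"older exponent and the vanishing moments of $\psi$, and your use of Borel--Cantelli with a product bound on $\PR(\max_k|d_{j,k}|\le\lambda_j)$ all coincide with the paper's scheme. Where you genuinely diverge is in how you manufacture the product bound. The paper sparsifies the indices to $k=le_j$ with $e_j=[2^{j\delta}]$ (see (\ref{m:eq:add1})), so that the enlarged local parts $G_{j,le_j}$ of (\ref{m:def:Gjl}) live on \emph{disjoint} intervals and are exactly independent, and it then shows separately (Lemma~\ref{m:prop:majmaxrjl}) that the remainders $R_{j,le_j}$ are negligible because $\Phi_\al$ decays and the integration region sits at distance $\approx 2^{j\delta}$ from the support. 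You instead keep \emph{all} indices, split each coefficient at $s=2^{-j}k$ into a unit-window local part $B_{j,k}$ (independent of $\mathcal F_k$) plus an $\mathcal F_k$-measurable remainder, and absorb the remainder via anti-concentration under sequential conditioning. Your peeling is sound: $\{|d_{j,k}|\le\lambda_j\}$ is $\mathcal F_{k+1}$-measurable, so conditioning on $\mathcal F_{k_N}$ and iterating does give $\prod_k(1-p_{j,k})$, and your exponent bookkeeping ($\#K_j\gtrsim 2^j$, $p_{j,k}\gtrsim 2^{-j}2^{j\al(\eps-\eps')}$) closes. This buys you a shorter argument with no $\delta$-sparsification and no separate remainder lemma; the paper's route buys genuine independence and only ever needs tail bounds, not anti-concentration.

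Two ingredients you lean on deserve to be called out, because one of them is exactly what the paper's construction is designed to avoid. First, your anti-concentration step $\PR(|a+S|>\lambda)\ge\PR(|S|>\lambda)$ is not a consequence of symmetry alone: it is Anderson's inequality and requires the (true, but nontrivial) unimodality of symmetric stable laws; cite it. Second, and more seriously, your "key deterministic input" $\|g_{j,k}\ind_{[2^{-j}k,2^{-j}(k+1)]}\|_{\al}\ge c'2^{-jH(k2^{-j})}$ amounts (after freezing $H$ as in Lemma~\ref{h:lem:ecartdjktdjk}, a reduction you skip but need, since $H(t)$ varies inside the window) to the lower bound $\min_{v\in[\underline H,\overline H]}\int_{0}^{1}|\Phi_\al(x,v)|^{\al}dx>0$. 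This is strictly stronger than the paper's (\ref{h:minphial}), which concerns $\int_{-\infty}^{1}$, and it is precisely why the paper lets the local window grow like $e_j2^{-j}$: then the relevant integral tends to the full one and (\ref{h:minphial}) suffices. Your statement is in fact true --- if $\Phi_\al(\cdot,v)$ vanished on all of $[0,1]$ then, since it also vanishes on $[1,+\infty)$ and the right-sided fractional derivative of order $1+v-1/\al$ at a point only involves values to the right, $\psi$ would vanish on $[0,1]$ and hence everywhere --- but it is not "established in \cite{hamonier2012lmsm}" in the form you need, and without this supplementary argument your lower bound does not close. With those two points repaired, the proof is correct.
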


\begin{Rem}
\label{Rem:m:Th:main}
Theorem~\ref{m:Th:main} has already been obtained in \cite{hamonier2012lfsm}, in the less general case of LFSM.
\end{Rem}

The remaining of this article is structured in the following way: Section~\ref{sec:proofthm:main1} is devoted to the proof of Theorem~\ref{h:thm:main1} and Section~\ref{sec:proofm:Th:main} to that of Theorem~\ref{m:Th:main}. Before ending the present section, let us mention that in the sequel, we will make a rather frequent use of the following two classical properties (see \cite{SamTaq}) of $\sas$ random variables:
\begin{itemize}
\item if $S$ is an arbitrary $\sas$ random variable, then one has for all $\ga\in (0,\al)$,
\begin{equation}
\label{eqa:equivmoS}
\ESP(|S|^{\ga})=c(\ga)\|S\|_{\al}^{\ga},
\end{equation}
where the (finite) constant $c(\ga)$ only depends on $\ga$, and $\|S\|_{\al}$ denotes the scale parameter of~$S$;
\item moreover, when $S=\int_{\R} f(s)\Za{ds}$ for some $f\in L^{\al}(\R)$, then one has,
\begin{equation}
\label{eqa:scparamS}
\|S\|_{\al}^{\al}=\int_{\R}|f(s)|^\al ds.
\end{equation}
\end{itemize}

\section{Proof of Theorem~\ref{h:thm:main1}} 
\label{sec:proofthm:main1}
A natural strategy for proving Theorem~\ref{h:thm:main1}, variants of which have already been used several times in the literature, consists in the following two main steps:
\begin{enumerate}
\item to establish that, when $j$ goes to $+\infty$, the asymptotic behavior of the empirical mean $V_j$, is equivalent to that of its expectation $\ESP(V_j)$, namely one has,
\begin{equation}
\label{eqa:ratioVj}
 \frac{V_j}{\ESP \big( V_j \big)} \xrightarrow[j\rightarrow+\infty]{a.s.} 1;
 \end{equation}
 \item to show that (\ref{h:main:equation}) holds when $V_j$ is replaced by $\ESP(V_j)$.
 \end{enumerate}
 
Rather than directly working with wavelet coefficients $d_{j,k}$ (see (\ref{m:djk})), it is better to work with their approximations $\widetilde{d}_{j,k}$, defined for each $(j,k)\in\Z^2$, as,
\begin{equation}\label{h:tdjk}
\widetilde{d}_{j,k}=2^j \int_{\R} X(t,H(k2^{-j})) \psi(2^jt-k) dt;
\end{equation}
indeed, in view of \cite[Corollary 1]{delbeke2000stochastic} and of the fact that the process $\{X(t,H(k2^{-j}))\,:\,t\in\R\}$ is a LFSM of Hurst parameter $H(k2^{-j})$, the latter coefficients offer the advantage to have a stable stochastic integral representation which is 
rather convenient to handle. More precisely, for all $(j,k)\in\Z^2$, 
\begin{equation}\label{h:tdjk2}
\widetilde{d}_{j,k}\stackrel{a.s.}{=} 2^{-j(H(k2^{-j})-1/\al)} \int_{\R} \Phi_{\al}(2^js-k,H(k2^{-j})) \Za{ds},
\end{equation}
where $\Phi_{\al}$ is the real-valued function, defined, for every $(s,v)\in \R \times (1/\al,1)$, as,
\begin{equation}\label{h:def:phial}
\Phi_{\al}(s,v) = \int_{\R} (y-s)_+^{v-1/\al} \psi(y) dy;
\end{equation}
later (see Proposition~\ref{h:prop:proprietephial}), we will show that $\Phi_{\al}$ satisfies several nice properties. 

The following lemma provides, uniformly in $k\in\nu_j$, a control of the made error, when $d_{j,k}$ is replaced by $\widetilde{d}_{j,k}$.

\begin{Lem}\label{h:lem:ecartdjktdjk}
There exists a finite positive random variable $C$, such that for all $\omega \in \Omega$, and $j\in\Z_+$, one has, 
\begin{equation}\label{h:ecartdjktdjk}
\max_{k\in\nu_j}\big|d_{j,k}(\omega)-\widetilde{d}_{j,k}(\omega)\big| \le C(\omega) 2^{-j\rho_H},
\end{equation}
where $\rho_H$ has been introduced in Part~2 of Remark~\ref{Remh:thm:main1}.
\end{Lem}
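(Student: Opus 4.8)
The plan is to bound the difference $d_{j,k}-\widetilde{d}_{j,k}$ uniformly in $k\in\nu_j$ by exploiting the definitions (\ref{m:djk}) and (\ref{h:tdjk}) together with the uniform Lipschitz property (\ref{eqa:uniflipX}) of the field $X(u,v)$ in its second variable. Subtracting the two integral representations, and using that $\psi$ is supported in $[0,1]$ (see (\ref{m:eq:support})), I would first write
\begin{equation*}
d_{j,k}-\widetilde{d}_{j,k}=2^j\int_{\R}\bigl(X(t,H(t))-X(t,H(k2^{-j}))\bigr)\psi(2^jt-k)\,dt,
\end{equation*}
so that the integration is effectively restricted to the dyadic interval $t\in[k2^{-j},(k+1)2^{-j}]$. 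On this interval both $H(t)$ and $H(k2^{-j})$ take values in a fixed compact subinterval of $(1/\al,1)$ (since $k\in\nu_j$ forces the interval inside $I_{j}$, hence inside $I_{j_0}$), so the uniform Lipschitz estimate (\ref{eqa:uniflipX}) applies with a single deterministic-up-to-$\omega$ constant.

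The key steps, in order, are as follows. First, I would apply (\ref{eqa:uniflipX}) to obtain, for each relevant $t$, a pointwise bound $\bigl|X(t,H(t))-X(t,H(k2^{-j}))\bigr|\le C_1(\omega)\,|H(t)-H(k2^{-j})|$, where $C_1(\omega)$ is the finite supremum appearing in (\ref{eqa:uniflipX}) for the compact ranges $\mathcal{H}=[\underline H,\overline H]$ and $\mathcal I=I_{j_0}$. Second, I would control the Hölder modulus of $H$ on $I_{j_0}$ using (\ref{m:cond:holder}) from Part~2 of Remark~\ref{Remh:thm:main1}: since $t$ and $k2^{-j}$ both lie in the dyadic interval of length $2^{-j}$, one has $|H(t)-H(k2^{-j})|\le c\,|t-k2^{-j}|^{\rho_H}\le c\,2^{-j\rho_H}$. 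Third, I would combine these with the trivial bound $\int_{\R}|\psi(2^jt-k)|\,dt=2^{-j}\|\psi\|_{\Lun}$, giving
\begin{equation*}
\bigl|d_{j,k}-\widetilde{d}_{j,k}\bigr|\le 2^j\cdot C_1(\omega)\,c\,2^{-j\rho_H}\cdot 2^{-j}\|\psi\|_{\Lun}=C(\omega)\,2^{-j\rho_H},
\end{equation*}
with $C(\omega)=c\,\|\psi\|_{\Lun}\,C_1(\omega)$; crucially the factor $2^j$ from the normalization cancels against the $2^{-j}$ from the length of the support, leaving exactly the asserted rate $2^{-j\rho_H}$. Taking the maximum over $k\in\nu_j$ changes nothing, since $C(\omega)$ is independent of $k$.

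I expect the main obstacle to be justifying that a \emph{single} random constant $C(\omega)$, finite for every $\omega\in\Omega$, works uniformly over all $j\in\Z_+$ and all $k\in\nu_j$. This rests entirely on the fact that (\ref{eqa:uniflipX}) is a supremum over the whole compact set $\mathcal I\times\mathcal H^2$ — so $C_1(\omega)$ does not depend on $(j,k)$ — combined with the global validity of the path properties for \emph{every} $\omega\in\Omega$, guaranteed by the choice of $\Omega$ made in the second footnote of Section~\ref{sec:Introestim}. A secondary technical point I would verify is that for $k\in\nu_j$ (hence $j\ge j_0$ so that $I_j\subseteq I_{j_0}$) the entire dyadic interval $[k2^{-j},(k+1)2^{-j}]$ lies inside $I_{j_0}$, which is precisely the content of definition (\ref{h:def:nuj}); this is what licenses the use of the Hölder estimate (\ref{m:cond:holder}), which holds only on $I_{j_0}$. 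Once these uniformities are in place, the remaining computation is the routine cancellation of powers of $2^j$ described above.
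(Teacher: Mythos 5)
Your proposal is correct and follows essentially the same route as the paper's proof: restrict the integral to the support of $\psi$, apply the uniform Lipschitz property (\ref{eqa:uniflipX}) in the $v$-variable with $\mathcal I=I_{j_0}$ and $\mathcal H=[\underline H,\overline H]$, invoke the H\"older bound (\ref{m:cond:holder}) on $H(\cdot)$, and observe the cancellation of $2^j$ against the length $2^{-j}$ of the dyadic interval. The only cosmetic differences are that the paper performs the change of variable $t=2^{-j}k+2^{-j}x$ explicitly and bounds via $\|\psi\|_{\Li(\R)}$ rather than $\|\psi\|_{\Lun(\R)}$, and that the finitely many indices $j<j_0$ are disposed of by the remark ``there is no restriction to assume $j\ge j_0$,'' which matches your handling of that point.
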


\begin{proof}[Proof of Lemma \ref{h:lem:ecartdjktdjk}]
There is no restriction to assume that $j\ge j_0$, where $j_0$ has been introduced in the statement of Theorem~\ref{h:thm:main1}.
Putting together (\ref{m:djk}), (\ref{h:tdjk}), the change of variable $t=2^{-j}k+2^{-j}x$, and (\ref{m:eq:support}), it comes that, for all $k\in\nu_j$, 
\begin{align*}
& \big|d_{j,k}(\omega)-\widetilde{d}_{j,k}(\omega)\big| \\
& \le \int_{\R} \Big|X\big(2^{-j}k+2^{-j}x,H(2^{-j}k+2^{-j}x),\omega\big)-X\big(2^{-j}k+2^{-j}x,H(2^{-j}k),\omega\big)\Big| \big|\psi(x)\big| dx \\
& \le \int_{0}^{1} \sup_{u\in I_{j_0}} \Big|X\big(u,H(2^{-j}k+2^{-j}x),\omega\big)-X\big(u,H(k2^{-j}),\omega\big)\Big| \big|\psi(x)\big| dx, 
\end{align*}
where the last inequality results from (\ref{h:def:nuj}) and the inclusion $I_{j}\subseteq I_{j_0}$. Thus, using (\ref{eqa:uniflipX}), when $\mathcal{I}=I_{j_0}$ and $\mathcal{H}=[\underline{H},\overline{H}]$, one obtains that,
\begin{align*}
\big|d_{j,k}(\omega)-\widetilde{d}_{j,k}(\omega)\big| & \le C'(\omega) \|\psi\|_{\Li(\R)} \int_0^1 \big|H(2^{-j}k+2^{-j}x)-H(2^{-j}k)\big| dx \\
& \le C(\o)2^{-j\rho_H},
\end{align*}
where, the last inequality follows from (\ref{m:cond:holder}); notice that $C'$ and $C$ are finite positive random variables non depending on $(j,k)$. 
\end{proof}

Let us now consider $\widetilde{V}_j$, the empirical mean of order $\beta$, defined, for all $j\in\Z_+$, as,
\begin{equation}\label{h:definition:Vjtilde:lfgn1}
\widetilde{V}_j= \frac{1}{n_j}\sum_{k\in\nu_j} |\widetilde{d}_{j,k}|^{\beta}.
\end{equation}
The following two propositions correspond to the two main steps of the strategy for proving Theorem~\ref{h:thm:main1}, in which $V_j$ is replaced by $\widetilde{V}_j$. 

\begin{Prop}\label{h:prop:lfgn1}
One has almost surely, 
\begin{equation}\label{h:lfgn1}
\frac{\widetilde{V}_j}{\ESP \big( \widetilde{V}_j \big)} \xrightarrow[j\rightarrow+\infty]{a.s.} 1.
\end{equation}
\end{Prop}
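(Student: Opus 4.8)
The plan is to derive (\ref{h:lfgn1}) from a summable second-moment bound combined with the Borel--Cantelli lemma. Since $\beta\in(0,\al/4)$ one has $2\beta<\al/2<\al$, so (\ref{eqa:equivmoS}) guarantees that each $|\widetilde d_{j,k}|^\beta$ is square-integrable, with $\ESP\big(|\widetilde d_{j,k}|^{2\beta}\big)=c(2\beta)\|\widetilde d_{j,k}\|_\al^{2\beta}$. Writing $\sigma_{j,k}=\|\widetilde d_{j,k}\|_\al$, formulas (\ref{h:tdjk2}) and (\ref{eqa:scparamS}), together with the change of variable $u=2^js-k$, give $\sigma_{j,k}=2^{-jH(k2^{-j})}\norm{\Phi_\al(\cdot,H(k2^{-j}))}_\al$. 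I would first reduce (\ref{h:lfgn1}) to proving that $\sum_{j}\var(\widetilde V_j)\big/\big(\ESP(\widetilde V_j)\big)^2<+\infty$: indeed, Markov's inequality then yields, for each fixed $\eps>0$, $\PR\big(|\widetilde V_j/\ESP(\widetilde V_j)-1|>\eps\big)\le\eps^{-2}\var(\widetilde V_j)/(\ESP(\widetilde V_j))^2$, whose sum over $j$ is finite, so that Borel--Cantelli together with a diagonal argument over a sequence $\eps\downarrow0$ gives (\ref{h:lfgn1}).

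Expanding $\var(\widetilde V_j)=n_j^{-2}\sum_{k,k'\in\nu_j}\cov\big(|\widetilde d_{j,k}|^\beta,|\widetilde d_{j,k'}|^\beta\big)$ and recalling $\ESP(\widetilde V_j)=c(\beta)\,n_j^{-1}\sum_{k\in\nu_j}\sigma_{j,k}^\beta$, the whole matter reduces to controlling the covariances. The key estimate I would establish is that there exist a finite constant $c$ and a \emph{summable} sequence $\big(\theta(m)\big)_{m\in\Z}$, not depending on $j$, such that for all $k,k'\in\nu_j$,
\[
\big|\cov\big(|\widetilde d_{j,k}|^\beta,|\widetilde d_{j,k'}|^\beta\big)\big|\le c\,\sigma_{j,k}^\beta\,\sigma_{j,k'}^\beta\,\theta(k-k').
\]
The diagonal terms ($k=k'$) are immediate from the square-integrability above. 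The off-diagonal decay is the genuine obstacle. To obtain it I would use the representation $|x|^\beta=C_\beta\int_\R|\theta|^{-1-\beta}\big(1-\cos(\theta x)\big)\,d\theta$ (valid for $0<\beta<2$) for each of the two factors, which turns the covariance into a double integral of $\tfrac12\big(e^{-B_+}+e^{-B_-}\big)-e^{-A}$, where $A=\int_\R\big(|\theta_1 f_{j,k}|^\al+|\theta_2 f_{j,k'}|^\al\big)\,ds$ and $B_\pm=\int_\R|\theta_1 f_{j,k}\pm\theta_2 f_{j,k'}|^\al\,ds$, with $f_{j,k}$ the integrand of $\widetilde d_{j,k}$ in (\ref{h:tdjk2}). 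This expression vanishes identically when the two kernels have disjoint supports, and in general is controlled by the normalized overlap $\big(\int_\R|f_{j,k}f_{j,k'}|^{\al/2}\big)\big/\big(\norm{f_{j,k}}_\al^{\al/2}\norm{f_{j,k'}}_\al^{\al/2}\big)$, the convergence of the resulting double $\theta$-integral being guaranteed by the assumption $\beta<\al/4$.

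After the change of variable $u=2^js-k$ all the dyadic scaling factors cancel, and this normalized overlap is bounded, up to the factors $\norm{\Phi_\al(\cdot,v)}_\al$ (which by the properties of $\Phi_\al$ recorded in Proposition~\ref{h:prop:proprietephial} are bounded above and below, uniformly for $v\in[\underline H,\overline H]$), by $\int_\R|\Phi_\al(u,\cdot)|^{\al/2}\,|\Phi_\al(u+k-k',\cdot)|^{\al/2}\,du$. Because $\psi$ has two vanishing moments (see (\ref{m:2moments})), $\Phi_\al(u,v)$ decays at infinity like $|u|^{v-1/\al-2}$, so this last integral decays like a summable negative power of $|k-k'|$, which furnishes $\theta$. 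Combining this covariance bound with $\sum_{k,k'}\sigma_{j,k}^\beta\sigma_{j,k'}^\beta\theta(k-k')\le\big(\sum_{m\in\Z}\theta(m)\big)\big(\max_{k\in\nu_j}\sigma_{j,k}^\beta\big)\big(\sum_{k\in\nu_j}\sigma_{j,k}^\beta\big)$ gives
\[
\frac{\var(\widetilde V_j)}{\big(\ESP(\widetilde V_j)\big)^2}\le c'\,\frac{\max_{k\in\nu_j}\sigma_{j,k}^\beta}{\sum_{k\in\nu_j}\sigma_{j,k}^\beta}.
\]
It then remains to check that this ratio is $O(2^{-j/2})$. The maximum is of order $2^{-j\beta\underline H_j}$ and is attained near a minimizer $\mu_j$ of $H$ on $I_j$; the H\"older regularity (\ref{m:cond:holder}) of $H$ together with (\ref{h:cond2:Ij}) guarantees that of the order of $2^{j/2}$ indices $k\in\nu_j$ satisfy $|k2^{-j}-\mu_j|\le2^{-j/2}$, and for each of them $2^{-j\beta H(k2^{-j})}\ge2^{-j\beta\underline H_j}\,2^{-c\beta j2^{-j\rho_H/2}}$ with $j2^{-j\rho_H/2}\to0$; summing over these indices yields $\sum_{k\in\nu_j}\sigma_{j,k}^\beta\gtrsim2^{j/2}\max_{k\in\nu_j}\sigma_{j,k}^\beta$. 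This is exactly the place where the normalization (\ref{h:cond1:nj}) enters, as pointed out in Part~1 of Remark~\ref{Remh:thm:main1}, and it closes the summability.

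The genuinely delicate point is the covariance estimate: extracting a \emph{summable} lag-decay $\theta$ requires both the sharp tail behavior of $\Phi_\al$ (a consequence of the two vanishing moments of $\psi$) and the exact cancellation of the dyadic scaling factors in the normalized overlap; the moment condition $\beta<\al/4$ is what keeps the singular $\theta$-integrals, and hence the final bound, under control. Everything else — the reduction through Markov and Borel--Cantelli, and the estimation of $\max_{k}\sigma_{j,k}^\beta\big/\sum_{k}\sigma_{j,k}^\beta$ via the H\"older regularity of $H$ — is comparatively routine.
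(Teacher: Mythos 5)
Your overall architecture coincides with the paper's: reduce the claim to $\sum_j\var(\widetilde V_j)/\big(\ESP(\widetilde V_j)\big)^2<+\infty$ via Markov's inequality and Borel--Cantelli, estimate the scale parameters $\|\widetilde d_{j,k}\|_{\al}\asymp 2^{-jH(k2^{-j})}$, bound the covariances by a function of the lag $k-k'$, and obtain a lower bound on $\sum_{k\in\nu_j}\|\widetilde d_{j,k}\|_{\al}^{\beta}$ by counting the roughly $2^{j/2}$ indices near a minimizer of $H$ on $I_j$ (this last step of yours is sound and is essentially what the paper does in Lemma~\ref{h:lem:controle:bjlambda} combined with (\ref{h:cond1:nj}) and the bound $2\beta(\overline{H}-\underline{H})<1/2$). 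The gap lies in the covariance estimate, which you correctly identify as the crux but do not actually establish. You assert a bound $\big|\cov\big(|\widetilde d_{j,k}|^{\beta},|\widetilde d_{j,k'}|^{\beta}\big)\big|\le c\,\sigma_{j,k}^{\beta}\sigma_{j,k'}^{\beta}\,\theta(k-k')$ with $\theta$ \emph{summable}, coming solely from the normalized overlap $\int_{\R}|f_{j,k}f_{j,k'}|^{\al/2}ds$. The covariance bound actually available (Lemma~\ref{leme:PTA08}, taken from Pipiras--Taqqu) contains, besides the overlap term $[f^1,f^2]_2$, the term $[f^1,f^2]_1$ built from $\int_{\R}|f^1|^{\al-1}|f^2|ds$; by Lemma~\ref{h:lem:maj:phi1jkl} this term only decays like $\big(1+|k-k'|\big)^{-(\al-1)(2+1/\al-\overline{H})}$, and since $\al-1$ can be arbitrarily close to $0$, the exponent can be far below $1$, so the resulting $\theta$ is \emph{not} summable in general. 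This is exactly why the paper cannot simply sum over the lag, and instead splits $\nu_j\times\nu_j$ into the near-diagonal set $\Delta_j=\{|k-l|\le j^{1/\widetilde\lambda}\}$ and its complement in Lemma~\ref{h:lem:controle:bjlambda}, exploiting the smallness of $\mathrm{card}(\Delta_j(k))$ on one side and of $j^{-\lambda/\widetilde\lambda}$ on the other.

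Your sketched derivation via the representation $|x|^{\beta}=C_{\beta}\int_{\R}|\theta|^{-1-\beta}(1-\cos(\theta x))\,d\theta$ could conceivably produce a covariance bound free of the $[\cdot,\cdot]_1$ term (via a pointwise inequality of the type $\big||a+b|^{\al}-|a|^{\al}-|b|^{\al}\big|\le 2|ab|^{\al/2}$), which would indeed give the summable decay exponent $\tfrac{\al}{2}\big(2+\tfrac{1}{\al}-\overline{H}\big)>1$; but as written the argument is incomplete at an essential point: to integrate $e^{-\min(A,B_{\pm})}$ against $|\theta_1\theta_2|^{\al/2-1-\beta}$ over $\R^2$ you need a uniform lower bound of the form $\|\theta_1 f_{j,k}\pm\theta_2 f_{j,k'}\|_{\La(\R)}^{\al}\ge c\big(|\theta_1|^{\al}+|\theta_2|^{\al}\big)$, i.e.\ precisely Condition $(\mathcal{A}_2)$ of Lemma~\ref{leme:PTA08}, which holds only for $|k-k'|\ge Q_0$ and whose verification (the paper's Lemmas~\ref{h:lem:hypA1} and~\ref{h:lem:hypA2}, resting on the localization of $\Phi_{\al}$) is a nontrivial part of the proof that you omit entirely. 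Until you either prove your strengthened covariance bound in full --- including this non-degeneracy step and the Cauchy--Schwarz treatment of the small lags $|k-k'|<Q_0$ --- or fall back on the published bound and add the near/far-diagonal splitting, the proof does not close.
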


\begin{Prop}\label{h:lem:control1}
One has,
\begin{equation}\label{eqa:h:control1}
\lim_{j\rightarrow + \infty} \Bigg| \frac{\log_2\big(\ESP(\widetilde{V_j})\big)}{-j\beta} - \underline{H}_j \Bigg|=0.
\end{equation}
\end{Prop}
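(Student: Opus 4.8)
The goal is to compute the leading-order asymptotics of $\ESP(\widetilde V_j)$ and show its $\log_2$, normalized by $-j\beta$, converges to $\underline H_j$. The plan is to evaluate each term $\ESP(|\widetilde d_{j,k}|^\beta)$ exactly using the stable stochastic integral representation \eqref{h:tdjk2} together with the two recalled properties \eqref{eqa:equivmoS} and \eqref{eqa:scparamS} of $\sas$ variables.

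First I would apply \eqref{eqa:equivmoS} with $\ga=\beta$ (legitimate since $\beta<\al/4<\al$) to each coefficient, giving $\ESP(|\widetilde d_{j,k}|^\beta)=c(\beta)\,\|\widetilde d_{j,k}\|_\al^\beta$. Then, from representation \eqref{h:tdjk2} and the scale-parameter formula \eqref{eqa:scparamS}, I would extract the deterministic prefactor to obtain
\begin{equation}
\label{eqa:prop2:scale}
\|\widetilde d_{j,k}\|_\al^\beta = 2^{-j\beta(H(k2^{-j})-1/\al)}\left(\int_{\R}\big|\Phi_\al(2^js-k,H(k2^{-j}))\big|^\al\,ds\right)^{\beta/\al}.
\end{equation}
A change of variable $x=2^js-k$ turns the integral into $2^{-j}\int_{\R}|\Phi_\al(x,H(k2^{-j}))|^\al\,dx$, so the whole factor becomes $2^{-j}\|\Phi_\al(\cdot,H(k2^{-j}))\|_{\La(\R)}^\al$, independent of $k$ except through the value $H(k2^{-j})$. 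Summing over $k\in\nu_j$ and dividing by $n_j$ gives an exact expression for $\ESP(\widetilde V_j)$ as a normalized sum of terms of the form $2^{-j\beta(H(k2^{-j})-1/\al)}$ times a bounded, bounded-away-from-zero factor $\big(2^{-j}\|\Phi_\al(\cdot,H(k2^{-j}))\|_\La^\al\big)^{\beta/\al}$.

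Next I would take $\log_2$ and divide by $-j\beta$. The factor $2^{-j/\al}$ contributes $1/\al$, cancelling the $-1/\al$ in the exponent, while the $\Phi_\al$-norm and the constant $c(\beta)$ contribute terms of order $O(1/j)$ once I establish that $\|\Phi_\al(\cdot,v)\|_\La$ is bounded above and below by positive constants uniformly for $v\in[\underline H,\overline H]$ (this is where the promised properties of $\Phi_\al$ from Proposition~\ref{h:prop:proprietephial} enter). The remaining dominant contribution is the $\log_2$ of the normalized sum $\frac1{n_j}\sum_{k\in\nu_j}2^{-j\beta H(k2^{-j})}$. The key estimate is that this sum is sandwiched: it is at most $2^{-j\beta\underline H_j}$ times a factor bounded by $n_j$ coming from the number of terms, and at least a single term with $H(k2^{-j})$ close to $\underline H_j=H(\mu_j)$, chosen by picking $k$ with $k2^{-j}$ near the minimizer $\mu_j$ from \eqref{h:def2:Hj} and using the continuity/Hölder control \eqref{m:cond:holder} to bound $H(k2^{-j})-\underline H_j$. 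Taking $\log_2$, dividing by $-j\beta$, and noting that $\log_2 n_j=O(j)$ so that $(\log_2 n_j)/(j\beta)$ must be shown to be dominated, yields $\underline H_j+o(1)$.

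The main obstacle I anticipate is controlling the normalized sum so that the spread of the exponents $H(k2^{-j})$ across $\nu_j$ does not corrupt the limit: the upper bound $\frac1{n_j}\sum_k 2^{-j\beta H(k2^{-j})}\le 2^{-j\beta\underline H_j}$ is immediate since every exponent is at least $\underline H_j$, but after dividing by $-j\beta$ this already gives the correct leading term, so the delicate direction is the lower bound, where I must exhibit enough terms of size comparable to $2^{-j\beta\underline H_j}$. Using the Hölder regularity \eqref{m:cond:holder} with exponent $\rho_H$, the values $H(k2^{-j})$ for $k2^{-j}$ within distance $\delta$ of $\mu_j$ differ from $\underline H_j$ by at most $c\delta^{\rho_H}$; balancing the number of such indices against this deviation, and invoking $n_j\ge[2^{j/2}]$ from \eqref{h:cond1:nj} to guarantee $(\log_2 n_j)/j$ stays bounded, shows the lower bound also matches $2^{-j\beta\underline H_j}$ up to a $2^{o(j)}$ factor. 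Dividing by $-j\beta$ sends every such error to zero, which establishes \eqref{eqa:h:control1}.
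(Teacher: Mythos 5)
Your proposal is correct and follows essentially the same route as the paper: reduce $\ESP(\widetilde V_j)$ to the deterministic sum $n_j^{-1}\sum_{k\in\nu_j}2^{-j\beta H(k2^{-j})}$ via the exact scale-parameter computation and the uniform two-sided bounds on $\|\Phi_\al(\cdot,v)\|_{\La(\R)}$, use the trivial upper bound $2^{-j\beta\underline H_j}$, and obtain the matching lower bound from the indices $k$ with $k2^{-j}$ in a ball of radius $\delta\sim j^{-1/\rho_H}$ around the minimizer $\mu_j$ (the paper's set $\widetilde\nu_j(\mu_j)$), whose proportion in $\nu_j$ is only polynomially small in $j$. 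The only point worth stressing is that "a single term" would not suffice (it loses a factor $n_j^{-1}$ with $\log_2 n_j$ of order $j$); your subsequent balancing of the neighborhood size against the H\"older deviation, which is exactly the paper's argument, is what makes the error $2^{o(j)}$.
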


Let us first focus on the proof of Proposition~\ref{h:prop:lfgn1}, later one will come back to that of Proposition~\ref{h:lem:control1}. Thanks to Borel-Cantelli Lemma, for showing that Proposition~\ref{h:prop:lfgn1} holds, it is sufficient to prove that, for any fixed real-number $\eta>0$, one has,
\begin{equation}
\label{eqa:bc:lfgn1}
\sum_{j=0}^{+\infty} \PR\Bigg( \Big| \frac{\widetilde{V}_j}{\ESP \big( \widetilde{V}_j \big)} -1  \Big| > \eta \Bigg)<+\infty;
\end{equation}
in order to obtain (\ref{eqa:bc:lfgn1}), one needs to derive a convenient upper bound for the probability,
$$
\PR\Bigg( \Big| \frac{\widetilde{V}_j}{\ESP \big( \widetilde{V}_j \big)} -1  \Big| > \eta \Bigg).
$$
Notice that Markov inequality, implies that,
\begin{equation}\label{h:markov:lfgn1}
\PR\Bigg( \Big| \frac{\widetilde{V}_j}{\ESP \big( \widetilde{V}_j \big)} -1  \Big| > \eta \Bigg) \le \eta^{-2} \frac{\var\big( \widetilde{V}_j \big)}{ \Big( \ESP( \widetilde{V}_j ) \Big)^2 };
\end{equation}
then, in view of the equalities,
\begin{equation}
\label{eqa:esperancetildeV}
\ESP \big( \widetilde{V}_j \big)=\frac{1}{n_j}\sum_{k\in \nu_j} \ESP\big(|\widetilde{d}_{j,k}|^{\beta}\big)
\end{equation}
and
\begin{equation}
\label{eqa:variancetildeV}
\var\big( \widetilde{V}_j \big) = \frac{1}{n_{j}^2}\sum_{(k,l)\in\nu_j\times \nu_j} \cov\big( |\widetilde{d}_{j,k}|^{\beta} ,  |\widetilde{d}_{j,l}|^{\beta} \big),
\end{equation}
it turns out that, for conveniently bounding,
 $$
\PR\Bigg( \Big| \frac{\widetilde{V}_j}{\ESP \big( \widetilde{V}_j \big)} -1  \Big| > \eta \Bigg),
$$
one needs to estimate, with some precision, $\ESP\big(|\widetilde{d}_{j,k}|^{\beta}\big)$ (or equivalently (see (\ref{eqa:equivmoS})) the scale parameter $\|\widetilde{d}_{j,k} \|_{\al}$ of $d_{j,k}$) and $\big|\cov\big( |\widetilde{d}_{j,k}|^{\beta} ,  |\widetilde{d}_{j,l}|^{\beta} \big)\big|$. The following two results provide rather sharp estimates of those quantities.

\begin{Lem}\label{h:lem:borne:scaletdjk}
There exist two constants $0<c_{2}\le c_1$, such that for every $(j,k)\in\Z^2$, one has,
\begin{equation}\label{h:borne:scaletdjk}
c_{2}2^{-jH(k2^{-j})} \le \|\widetilde{d}_{j,k} \|_{\al} \le c_{1} 2^{-jH(k2^{-j})},
\end{equation}
where $\|\widetilde{d}_{j,k} \|_{\al}$ is the scale parameter of the $\sas$ random variable $\widetilde{d}_{j,k}$.
\end{Lem}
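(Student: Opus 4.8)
The plan is to compute the scale parameter $\|\widetilde{d}_{j,k}\|_{\al}$ explicitly from the stable integral representation (\ref{h:tdjk2}), and then to reduce the two-sided bound to a uniform control of the $\La$-norm of $\Phi_\al(\cdot,v)$ over the compact range of admissible values of $v$. First I would apply (\ref{eqa:scparamS}) to (\ref{h:tdjk2}), which gives
$$\|\widetilde{d}_{j,k}\|_{\al}^{\al} = 2^{-j\al(H(k2^{-j})-1/\al)} \int_{\R} \big|\Phi_\al(2^j s - k, H(k2^{-j}))\big|^{\al}\, ds.$$
Performing the change of variable $u = 2^j s - k$ (so that $ds = 2^{-j}\,du$), the extra factor $2^{-j}$ combines with $2^{-j\al(H(k2^{-j})-1/\al)}$ to yield exactly $2^{-j\al H(k2^{-j})}$, leaving
$$\|\widetilde{d}_{j,k}\|_{\al} = 2^{-jH(k2^{-j})} \left(\int_{\R} \big|\Phi_\al(u, H(k2^{-j}))\big|^{\al}\, du\right)^{1/\al}.$$
The crucial feature is that after this rescaling the remaining integral depends on $j$ and $k$ only through the single real number $v = H(k2^{-j})$, which, since the range of $H(\cdot)$ lies in $[\underline{H},\overline{H}]$, always belongs to this compact interval, for every $(j,k)\in\Z^2$.

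Second, I would set $G(v) = \|\Phi_\al(\cdot,v)\|_{\La(\R)}$ for $v\in[\underline{H},\overline{H}]$ and show that $G$ is bounded above and below by strictly positive constants. Invoking the properties of $\Phi_\al$ to be established in Proposition~\ref{h:prop:proprietephial} --- namely that $\Phi_\al(\cdot,v)\in\La(\R)$ for every $v\in(1/\al,1)$, that the map $v\mapsto G(v)$ is continuous there, and that $\Phi_\al(\cdot,v)$ is never the zero function, so that $G(v)>0$ --- one concludes by compactness of $[\underline{H},\overline{H}]$ that $c_2 := \min_{v\in[\underline{H},\overline{H}]} G(v)$ and $c_1 := \max_{v\in[\underline{H},\overline{H}]} G(v)$ are well defined and satisfy $0 < c_2 \le c_1 < +\infty$. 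Since $G(H(k2^{-j}))$ lies between these two constants for every $(j,k)$, the claimed bound (\ref{h:borne:scaletdjk}) follows at once.

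The main obstacle is the strict positivity of the lower bound, i.e. the fact that $G(v)>0$, which amounts to showing that $\Phi_\al(\cdot,v)$ does not vanish identically; this cannot follow from size estimates alone but relies on $\psi$ being a nonzero function. The natural route is a Fourier-transform argument: writing $\Phi_\al(\cdot,v)$ as a convolution of $\psi$ with the kernel $z\mapsto(-z)_+^{v-1/\al}$, whose distributional Fourier transform does not vanish off the origin, one sees that $\widehat{\Phi_\al}(\cdot,v)$ cannot be identically zero unless $\widehat{\psi}$ vanishes, contradicting $\psi\neq 0$. I expect this non-degeneracy, together with the continuity of $v\mapsto G(v)$, to be exactly the content supplied by Proposition~\ref{h:prop:proprietephial}, so that here it suffices to invoke it; the upper bound, by contrast, is the routine finiteness of the $\La$-norm uniformly over the compact $v$-range.
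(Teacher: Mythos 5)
Your proposal is correct and follows essentially the same route as the paper: apply (\ref{eqa:scparamS}) to the representation (\ref{h:tdjk2}), rescale by $u=2^js-k$ to obtain $\|\widetilde{d}_{j,k}\|_{\al}=2^{-jH(k2^{-j})}\|\Phi_{\al}(\cdot,H(k2^{-j}))\|_{\La(\R)}$, and then take $c_1$ and $c_2$ as the maximum and minimum of $v\mapsto\|\Phi_{\al}(\cdot,v)\|_{\La(\R)}$ over $[\underline{H},\overline{H}]$, using Proposition~\ref{h:prop:proprietephial}. Your anticipated Fourier-transform argument for the strict positivity is indeed exactly how the paper establishes Part $(iii)$ of that proposition.
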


\begin{Prop}\label{h:prop:maj1:cov}
Let $\lambda$ be the strictly positive real number, defined as,
\begin{equation}\label{h:def:lambdaopt}
\lambda= \min \left\{ \frac{\al}{2}\Big(2+\frac{1}{\al}-\overline{H}\Big)\,; (\al-1)\Big(2+\frac{1}{\al}-\overline{H}\Big)\right\}.
\end{equation}
Then, there exists a constant $c>0$, non depending on $(j,k,l)$, such that the inequality, 
\begin{equation}\label{h:maj1:cov}
\big| \cov( |\widetilde{d}_{j,k}|^{\beta},|\widetilde{d}_{j,l}|^{\beta} )\big| \le c 2^{-j\beta(H(k2^{-j})+H(l2^{-j}))} \big(1+|k-l|\big)^{-\lambda},
\end{equation}
holds for each $(j,k,l)\in\Z^3$.
\end{Prop}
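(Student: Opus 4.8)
The plan is to factor out the scales so that only a normalized covariance depending on $(j,k,l)$ through $|k-l|$ and the two Hurst values survives, and then to extract the polynomial decay in $|k-l|$ from the tail behaviour of $\Phi_\al$. First I would use the stochastic-integral representation (\ref{h:tdjk2}) to write $\widetilde d_{j,k}=\int_\R f_k(s)\,\Za{ds}$ with $f_k(s)=2^{-j(H(k2^{-j})-1/\al)}\Phi_\al(2^js-k,H(k2^{-j}))$, and likewise $\widetilde d_{j,l}$ with kernel $f_l$. Since $x\mapsto|x|^\beta$ is positively homogeneous, I would pull out the scale parameters and write $\cov(|\widetilde d_{j,k}|^\beta,|\widetilde d_{j,l}|^\beta)=\|\widetilde d_{j,k}\|_\al^\beta\,\|\widetilde d_{j,l}\|_\al^\beta\,\cov(|U|^\beta,|V|^\beta)$, where $U=\widetilde d_{j,k}/\|\widetilde d_{j,k}\|_\al$ and $V=\widetilde d_{j,l}/\|\widetilde d_{j,l}\|_\al$ are jointly $\sas$ of unit scale, with normalized spectral kernels $g_k=f_k/\|f_k\|_\al$ and $g_l=f_l/\|f_l\|_\al$. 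By Lemma~\ref{h:lem:borne:scaletdjk}, $\|\widetilde d_{j,k}\|_\al^\beta\|\widetilde d_{j,l}\|_\al^\beta\le c_1^{2\beta}\,2^{-j\beta(H(k2^{-j})+H(l2^{-j}))}$, so there remains only to prove that $|\cov(|U|^\beta,|V|^\beta)|\le c\,(1+|k-l|)^{-\lambda}$, uniformly in $(j,k,l)$.

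For the normalized covariance I would use the elementary identity $|x|^\beta=K_\beta\int_\R(1-\cos(x\xi))\,|\xi|^{-1-\beta}\,d\xi$, valid for $\beta\in(0,2)$ (which holds here since $\beta<\al/4<1/2$), so that, the deterministic parts dropping out,
\begin{equation*}
\cov\big(|U|^\beta,|V|^\beta\big)=K_\beta^2\int_\R\int_\R\frac{\cov\big(\cos(\xi_1 U),\cos(\xi_2 V)\big)}{|\xi_1|^{1+\beta}\,|\xi_2|^{1+\beta}}\,d\xi_1\,d\xi_2.
\end{equation*}
Expanding the cosine product by the sum-to-product formula and using (\ref{eqa:scparamS}) for the real characteristic functions of the $\sas$ integrals $\xi_1 U\pm\xi_2 V$, the covariance of cosines equals $e^{-(A+B)}\big(\tfrac12 e^{-R_+}+\tfrac12 e^{-R_-}-1\big)$, with $A=|\xi_1|^\al$, $B=|\xi_2|^\al$ (recall $\|g_k\|_\al=\|g_l\|_\al=1$) and $R_\pm=\int_\R\big(|\xi_1 g_k\pm\xi_2 g_l|^\al-|\xi_1 g_k|^\al-|\xi_2 g_l|^\al\big)\,ds$. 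The whole point is that $R_\pm$ measures the overlap of the two kernels: were their supports disjoint, $R_\pm$ would vanish and so would the covariance.

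The core of the argument is then a polynomial bound, in $|k-l|$, on $R_\pm$, and hence on $\tfrac12 e^{-R_+}+\tfrac12 e^{-R_-}-1$. Here I would exploit the two properties of $\Phi_\al$ from Proposition~\ref{h:prop:proprietephial}: it vanishes for $s\ge1$ (by (\ref{m:eq:support})) and, thanks to the two vanishing moments (\ref{m:2moments}), decays like $|s|^{v-1/\al-2}$ as $s\to-\infty$, so its tail decay exponent is at least $2+1/\al-\overline H$. After the change of variable $u=2^js$, shift-invariance shows that the relevant cross-integrals $\int_\R|\Phi_\al(u-k,\cdot)|^{a}|\Phi_\al(u-l,\cdot)|^{b}\,du$ depend on $k,l$ only through $|k-l|$ and decay polynomially in it. Bounding the integrand $\big||\xi_1 g_k\pm\xi_2 g_l|^\al-|\xi_1 g_k|^\al-|\xi_2 g_l|^\al\big|$ in two complementary ways — once by the mean-value inequality $\big||a+b|^\al-|a|^\al\big|\le c\,(|a|^{\al-1}|b|+|b|^\al)$, which yields the exponent $(\al-1)(2+1/\al-\overline H)$, and once by a symmetric Hölder splitting at exponent $\al/2$, which yields $\tfrac{\al}{2}(2+1/\al-\overline H)$ — produces the decay $(1+|k-l|)^{-\lambda}$ with $\lambda$ the minimum of the two, exactly (\ref{h:def:lambdaopt}). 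Finally I would integrate this against $|\xi_1|^{-1-\beta}|\xi_2|^{-1-\beta}$: the factor $e^{-(A+B)}$ controls the region where $|\xi_1|$ or $|\xi_2|$ is large, while near $\xi_i=0$ the cancellation of the linear-in-$\xi_i$ contributions between $R_+$ and $R_-$ makes the covariance of cosines vanish fast enough (of order $|\xi_i|^\al$) to beat the singular weight, since $\al>\beta$. Uniformity in $j$ comes from $H(k2^{-j}),H(l2^{-j})\in[\underline H,\overline H]$, a compact set.

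The step I expect to be the main obstacle is precisely the decay estimate of the cross-integrals together with the subsequent double integration over $(\xi_1,\xi_2)$. One must simultaneously (i) show that the cross-integral genuinely decays at rate $\lambda$ in $|k-l|$, which requires splitting the $u$-integration into the bulk of each kernel and its polynomial tail, since a single global Hölder bound need not even be integrable, and (ii) keep the $(\xi_1,\xi_2)$-integral finite near the origin, where the weight $|\xi_1|^{-1-\beta}|\xi_2|^{-1-\beta}$ is singular and one must extract enough powers of $\xi_1,\xi_2$ from $R_\pm$. Balancing these two competing requirements is exactly what forces the two-case minimum defining $\lambda$ in (\ref{h:def:lambdaopt}).
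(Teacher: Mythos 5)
Your overall architecture coincides with the paper's: you factor out the scale parameters via the normalization and Lemma~\ref{h:lem:borne:scaletdjk}, reduce to a bound on the normalized covariance decaying like $(1+|k-l|)^{-\lambda}$, and extract that decay from the tail estimate (\ref{h:local:phial}) on $\Phi_{\al}$ through exactly the two cross-integrals $\int|\Phi_{\al}(\cdot-k,\cdot)\Phi_{\al}(\cdot-l,\cdot)|^{\al/2}$ and $\int|\Phi_{\al}(\cdot-k,\cdot)|^{\al-1}|\Phi_{\al}(\cdot-l,\cdot)|$ that the paper controls in Lemmas~\ref{h:lem:maj:phi2jkl} and~\ref{h:lem:maj:phi1jkl} via the convolution-type Lemma~\ref{h:lem:maj1:rq}; this is where the two exponents entering the minimum (\ref{h:def:lambdaopt}) come from in both treatments. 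The structural difference is that the paper does not prove the covariance-versus-cross-integrals inequality itself: it imports it as Lemma~\ref{leme:PTA08}, i.e.\ \cite[Theorem~2.4]{pipiras2007bounds}, and spends its effort verifying the hypotheses $(\mathcal{A}_1)$ and $(\mathcal{A}_2)$ uniformly in $(j,k,l)$ with $|k-l|\ge Q_0$ (Lemmas~\ref{h:lem:hypA1} and~\ref{h:lem:hypA2}), disposing of $|k-l|<Q_0$ by Cauchy--Schwarz. You propose instead to re-derive that inequality from scratch via $|x|^{\beta}=K_{\beta}\int(1-\cos(x\xi))|\xi|^{-1-\beta}d\xi$ and the joint characteristic function, which is, in substance, the proof of the cited theorem.

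In that re-derivation there is one genuine missing ingredient, and it is precisely the content of condition $(\mathcal{A}_2)$. The quantity you must bound is $\tfrac12 e^{-\|\xi_1 g_k+\xi_2 g_l\|_{\La(\R)}^{\al}}+\tfrac12 e^{-\|\xi_1 g_k-\xi_2 g_l\|_{\La(\R)}^{\al}}-e^{-(A+B)}$, and since $R_{\pm}$ can be as negative as $-(A+B)$ when the two kernels nearly cancel, the exponents $\|\xi_1 g_k\pm\xi_2 g_l\|_{\La(\R)}^{\al}$ could a priori degenerate; your claim that ``the factor $e^{-(A+B)}$ controls the region where $|\xi_1|$ or $|\xi_2|$ is large'' is then unjustified, and in that region you would be left with an $O(1)$ integrand carrying no decay in $|k-l|$ against a weight that is not integrable at infinity by itself. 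What closes this is the uniform lower bound $\|\xi_1 g_k\pm\xi_2 g_l\|_{\La(\R)}^{\al}\ge c''\big(|\xi_1|^{\al}+|\xi_2|^{\al}\big)$ for $|k-l|\ge Q_0$ --- the paper's Lemma~\ref{h:lem:hypA2}, itself a consequence of the smallness of $\int|f_{j,k}f_{j,l}|^{\al/2}$ --- combined with $|e^{-x}-e^{-y}|\le|x-y|\,e^{-\min(x,y)}$, which gives $\big|\cov(\cos(\xi_1U),\cos(\xi_2V))\big|\le \tfrac12\big(|R_+|+|R_-|\big)e^{-c''(A+B)}$ and lets both splittings of $R_{\pm}$ be integrated (note that only the $\al/2$-splitting is integrable near the origin in general, since $\beta<\al/2$ always holds but $\beta<\al-1$ may fail for $\al$ close to $1$; the $e^{-c''(A+B)}$ factor handles infinity for both). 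Add this non-degeneracy step, and treat the finitely many $|k-l|<Q_0$ by the trivial Cauchy--Schwarz bound as the paper does; with those two additions your argument is complete.
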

\begin{Rem}
\label{Rem1:newcov}
The inequality (\ref{h:maj1:cov}) is completely sufficient for the purposes of our present article, however it is worth noticing that, a careful inspection of its proof, 
shows that it can be improved in the following way:
$$
\big| \cov( |\widetilde{d}_{j,k}|^{\beta},|\widetilde{d}_{j,l}|^{\beta} )\big| \le c 2^{-j\beta(H(k2^{-j})+H(l2^{-j}))} \big(1+|k-l|\big)^{-\lambda_{j,k}},
$$
where, $c>0$ is a constant non depending on $(j,k,l)$, and where 
$$
\lambda_{j,k}= \min \left\{ \frac{\al}{2}\Big(2+\frac{1}{\al}-\max\big\{H(k2^{-j}),H(l2^{-j})\big\}\Big)\,; (\al-1)\Big(2+\frac{1}{\al}-\max\big\{H(k2^{-j}),H(l2^{-j})\big\}\Big)\right\};
$$
if moreover, the analyzing wavelet $\psi$, has $N\ge 2$ vanishing moments, i.e.
$$
\int_{\R} \psi(s) ds = \int_{\R} s\psi(s) ds =\ldots=\int_{\R} s^{N-1}\psi(s) ds=0,
$$
then one can even take,
$$
\lambda_{j,k}= \min \left\{ \frac{\al}{2}\Big(N+\frac{1}{\al}-\max\big\{H(k2^{-j}),H(l2^{-j})\big\}\Big)\,; (\al-1)\Big(N+\frac{1}{\al}-\max\big\{H(k2^{-j}),H(l2^{-j})\big\}\Big)\right\}.
$$
\end{Rem}

Some preliminary results are required, for proving Lemma~\ref{h:lem:borne:scaletdjk} and Proposition~\ref{h:prop:maj1:cov}. 

\begin{Prop}\label{h:prop:proprietephial}
The function $\Phi_{\al}$ defined in (\ref{h:def:phial}), satisfies the three following "nice" properties: 
\begin{itemize}
\item[(i)] The function $\Phi_{\al}$ is continuous over $\R\times (1/\al,1)$.
\item[(ii)] The function $\Phi_{\al}$ is well-localized in $s$ uniformly in  $v\in [\underline{H},\overline{H}]$, more precisely, one has,
\begin{equation}
\label{h:local:phial}
\left\{
\begin{array}{l}
\supp(\Phi_{\al}(\cdot,v)) \subseteq ]-\infty,1],\,\mbox{for all fixed $v\in(1/\al,1)$,} \\
\mbox{and}\\
c_1 =\sup\left\{\big(1+|s|\big)^{2+1/\al-\overline{H}}\big|\Phi_{\al}(s,v) \big| \,:\, (s,v)\in ]-\infty,1]\times [\underline{H},\overline{H}]\right\}<+\infty.
\end{array}
\right.
\end{equation}
\item[(iii)] The finite nonnegative function $v \mapsto \| \Phi_{\al}(\cdot,v)\|_{\La(\R)}$ is continuous over $[\underline{H},\overline{H}]$;
moreover, one has, 
\begin{equation}\label{h:minphial}
c_2 =\min_{v\in [\underline{H},\overline{H}]} \| \Phi_{\al}(\cdot,v)\|_{\La(\R)} >0.
\end{equation}
\end{itemize}
\end{Prop}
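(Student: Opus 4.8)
The plan is to treat the three assertions in turn, the common engine being the representation $\Phi_{\al}(s,v)=\int_0^1 (y-s)_+^{v-1/\al}\psi(y)\,dy$, which is legitimate since $\supp\psi\subseteq[0,1]$, together with the two vanishing moments~(\ref{m:2moments}). For the support statement in (ii), observe that if $s\ge 1$ then $y-s\le 0$ for every $y\in[0,1]$, hence $(y-s)_+^{v-1/\al}=0$ and $\Phi_{\al}(s,v)=0$; this yields $\supp(\Phi_{\al}(\cdot,v))\subseteq\,]-\infty,1]$. To obtain (i), I would fix $(s_0,v_0)$, restrict to a compact box $[s_0-1,s_0+1]\times[v_0-\eta,v_0+\eta]\subset\R\times(1/\al,1)$, and note that, since the exponent $v-1/\al$ then ranges in a compact subinterval of $(0,1)$, the map $(s,v)\mapsto(y-s)_+^{v-1/\al}$ is jointly continuous (including across $y=s$, because $x\mapsto x_+^{\kappa}$ is continuous for $\kappa>0$) and bounded on that box uniformly in $y\in[0,1]$; dominated convergence against $|\psi|$ gives continuity.

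For the decay estimate in (ii), the bound is immediate on $\{|s|\le 1\}\times[\underline{H},\overline{H}]$ from (i) and boundedness of $(1+|s|)$. For $s\le -1$, writing $\kappa=v-1/\al\in(0,1)$ and $\sigma=-s\ge 1$, I would Taylor-expand $g(y)=(y-s)^{\kappa}$ at $y=0$ to second order and use $\int\psi=\int y\psi=0$ to subtract the constant and linear parts at no cost:
\[
\Phi_{\al}(s,v)=\int_0^1\Big[(y-s)^{\kappa}-g(0)-g'(0)y\Big]\psi(y)\,dy.
\]
The Lagrange remainder is $\tfrac12 g''(\xi)y^2$ with $g''(\xi)=\kappa(\kappa-1)(\xi-s)^{\kappa-2}$ and $\xi\in(0,y)$; since $\kappa-2<0$ and $\xi-s\ge\sigma$, the factor $(\xi-s)^{\kappa-2}$ is at most $\sigma^{\kappa-2}$. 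Hence $|\Phi_{\al}(s,v)|\le C\,\sigma^{\kappa-2}\int_0^1 y^2|\psi(y)|\,dy$, with $C$ bounding $|\kappa(\kappa-1)|\le 1/4$ uniformly for $v\in[\underline{H},\overline{H}]$. Finally $\kappa-2\le\overline{H}-1/\al-2=-(2+1/\al-\overline{H})$ and $\sigma\ge 1$ give $|\Phi_{\al}(s,v)|\le c\,|s|^{-(2+1/\al-\overline{H})}$, which is exactly~(\ref{h:local:phial}).

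For (iii), integrability of $|\Phi_{\al}(\cdot,v)|^{\al}$ over $]-\infty,1]$ follows from (ii): the tail decays like $(1+|s|)^{-\al(2+1/\al-\overline{H})}$, and the exponent $\al(2+1/\al-\overline{H})=2\al+1-\al\overline{H}>\al+1>1$ because $\overline{H}<1$ and $\al>1$, so the norm is finite. Continuity of $v\mapsto\|\Phi_{\al}(\cdot,v)\|_{\La(\R)}$ then comes from dominated convergence — the integrand is continuous in $v$ by (i) and is dominated by the $v$-independent bound of (ii) — followed by taking the continuous $\al$-th root. It remains to establish~(\ref{h:minphial}): by compactness of $[\underline{H},\overline{H}]$ the continuous minimum is attained, so it suffices to show $\Phi_{\al}(\cdot,v)\not\equiv 0$ for each fixed $v$. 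For $\sigma=-s>1$ I would expand $(y-s)^{\kappa}=\sigma^{\kappa}(1+y/\sigma)^{\kappa}=\sum_{n\ge 0}\binom{\kappa}{n}\sigma^{\kappa-n}y^n$ (uniformly convergent on $y\in[0,1]$ by the M-test, since $y/\sigma\le 1/\sigma<1$), integrate term by term, and get $\Phi_{\al}(s,v)=\sum_{n\ge 0}\binom{\kappa}{n}\sigma^{\kappa-n}m_n$ with $m_n=\int_0^1 y^n\psi(y)\,dy$. Since $\psi\ne 0$ is continuous with compact support, not all its moments vanish (else $\int p\,\psi=0$ for every polynomial $p$, hence $\int\psi^2=0$ by Weierstrass, forcing $\psi\equiv 0$); as $m_0=m_1=0$, there is a least index $m\ge 2$ with $m_m\ne 0$, and $\binom{\kappa}{m}\ne 0$ because $\kappa\in(0,1)$ is non-integer. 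The leading term $\binom{\kappa}{m}m_m\,\sigma^{\kappa-m}$ then dominates as $\sigma\to+\infty$, so $\Phi_{\al}(s,v)\ne 0$ for all sufficiently negative $s$; combined with (i) this forces $\|\Phi_{\al}(\cdot,v)\|_{\La(\R)}>0$.

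I expect the genuinely delicate point to be the strict positivity~(\ref{h:minphial}): the decay and both continuity statements are routine dominated-convergence arguments once the two-moment Taylor cancellation is in place, whereas ruling out $\Phi_{\al}(\cdot,v)\equiv 0$ relies on the observation that a nonzero continuous compactly supported function cannot have all its moments vanishing, together with the non-integrality of $\kappa$ that keeps the binomial coefficient alive.
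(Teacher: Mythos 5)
Your proposal is correct, and on one key point it departs genuinely from the paper. Parts (i), the support claim, and the finiteness/continuity statements of (iii) follow the same dominated-convergence route as the paper. For the decay in (ii), the paper integrates by parts twice against the compactly supported primitives $\psi^{(-1)},\psi^{(-2)}$ (whose supports stay in $[0,1]$ precisely because of the two vanishing moments), whereas you subtract the zeroth- and first-order Taylor terms of $y\mapsto(y-s)^{\kappa}$ and bound the Lagrange remainder; these are two faces of the same cancellation and yield the identical exponent $-(2+1/\al-\overline{H})$. The real divergence is in the proof of the strict positivity (\ref{h:minphial}): the paper argues by contradiction through the Fourier transform, identifying $\Gamma(1+v_0-1/\al)^{-1}\Phi_{\al}(\cdot,v_0)$ as a right-sided fractional derivative of $\psi$ and invoking the explicit formula $\widehat{\Phi}_{\al}(\xi,v_0)=\Gamma(1+v_0-1/\al)\,e^{i\,\sign(\xi)(1+v_0-1/\al)\pi/2}\,\widehat{\psi}(\xi)\,|\xi|^{-(1+v_0-1/\al)}$ from the fractional-calculus literature, so that $\Phi_{\al}(\cdot,v_0)\equiv 0$ would force $\widehat\psi\equiv 0$. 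You instead expand $(y-s)^{\kappa}=\sigma^{\kappa}(1+y/\sigma)^{\kappa}$ in a binomial series for $\sigma=-s>1$, integrate term by term, and exhibit a nonvanishing leading term $\binom{\kappa}{m}m_m\sigma^{\kappa-m}$ as $\sigma\to+\infty$, where $m\ge 2$ is the first nonzero moment of $\psi$ (which exists by Weierstrass approximation) and $\binom{\kappa}{m}\neq 0$ because $\kappa\in(0,1)$ is not an integer. Your argument is entirely elementary and self-contained, avoiding the citation to the fractional-derivative Fourier identity; the paper's argument is shorter once that identity is granted and gives the stronger conclusion that $\widehat{\Phi}_{\al}(\cdot,v_0)$ vanishes nowhere $\widehat\psi$ doesn't. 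Both correctly reduce the minimum over $[\underline{H},\overline{H}]$ to non-vanishing at a single $v$ via compactness and the continuity established earlier in (iii).
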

Before proving Proposition \ref{h:prop:proprietephial}, let us mention that Part $(ii)$ of it, is a generalization of Proposition~2.1 in \cite{hamonier2012lfsm}.
\begin{proof}[Proof of Proposition \ref{h:prop:proprietephial}] First one shows that Part $(i)$ holds; namely, for each $(\tilde{u},\tilde{v})\in \R \times (1/\al,1)$, and for every sequence $\{(u_n,v_n):n\in\Z_+ \}$ of elements of $\R\times (1/\al,1)$ converging to $(\tilde{u},\tilde{v})$, one has,
$$ 
\Phi_{\al}(\tilde{u},\tilde{v})=\lim_{n\rightarrow +\infty} \Phi_{\al}(u_n,v_n),
$$
in other words (see (\ref{h:def:phial})),
\begin{equation} 
\label{eqa:leb11}
\int_{\R}  (y-\tilde{u})_+^{\tilde{v}-1/\al}  \psi(y) dy =\lim_{n\rightarrow  +\infty} \int_{\R} (y-u_n)_+^{v_n-1/\al} \psi(y) dy. 
\end{equation}
The equality (\ref{eqa:leb11}) will result from \tcd. One has, for each fixed $y\in\R$, 
\begin{equation}\label{h:eq1:phial}
\lim_{n\rightarrow + \infty} (y-u_n)_+^{v_n-1/\al} \psi(y) = (y-\tilde{u})_+^{\tilde{v}-1/\al} \psi(y),
\end{equation}
since the function $(s,v) \mapsto (y-s)_+^{v-1/\al} \psi(y)$ is continuous over $\R \times (1/\al,1)$; on the other hand, the triangle inequality and the inequality $0<v_n-1/\al<1/2$, for each $n\in\Z_+$, imply that
\begin{equation}\label{h:eq2:phial}
|(y-u_n)_+^{v_n-1/\al} \psi(y) | \le \Big(1+|y|+\sup_{n\in\Z_+} |u_n| \Big) |\psi(y)|.
\end{equation}
Thus, in view of (\ref{h:eq1:phial}), (\ref{h:eq2:phial}), and of the fact that the function $y\mapsto \Big(1+|y|+\sup_{n\in\Z_+} |u_n| \Big) |\psi(y)|$ belongs to $\Lun(\R)$, one is allowed to obtain (\ref{eqa:leb11}) by applying \tcd.

Let us now prove that Part $(ii)$ holds. It follows from (\ref{m:eq:support}) and (\ref{h:def:phial}), that one has, for each $(s,v)\in\R \times (1/\al,1)$, 
\begin{equation}\label{h:def:phial1}
\Phi_{\al}(s,v) = \int_{0}^{1} (y-s)_+^{v-1/\al}\psi(y) dy.
\end{equation}
Next, combining (\ref{h:eq:pospart}) with (\ref{h:def:phial1}), one gets, for all fixed $v\in (1/\al,1)$, that,
$$
\supp(\Phi_{\al}(\cdot,v)) \subseteq ]-\infty,1].
$$
Let us now turn to the proof of the finiteness of the quantity $c_1$ defined in (\ref{h:local:phial}). The equality (\ref{h:def:phial1}) easily implies that, 
$$
\sup\left\{\big(1+|s|\big)^{2+1/\al-\overline{H}}\big|\Phi_{\al}(s,v) \big| \,:\, (s,v)\in [-1,1]\times [\underline{H},\overline{H}]\right\} \le 4 \|\psi\|_{\Li(\R)}<+\infty;
$$
so, it is enough to show that
\begin{equation}
\label{eqah:local:phial}
\sup\left\{\big(1-s\big)^{2+1/\al-\overline{H}}\big|\Phi_{\al}(s,v) \big| \,:\, (s,v)\in ]-\infty,-1)\times [\underline{H},\overline{H}]\right\}<+\infty.
\end{equation}
One denotes by $\psi^{(-1)}$ and $\psi^{(-2)}$ the primitives of $\psi$, of order $1$ and $2$, defined, for all $z\in\R$, by
\begin{equation*}
\psi^{(-1)}(z)=\int_{-\infty}^{z} \psi(y) dy \,\mbox{ and } \,\psi^{(-2)}(z)=\int_{-\infty}^{z} \psi^{(-1)}(y) dy;
\end{equation*}
it follows from (\ref{m:2moments}) and (\ref{m:eq:support}), that they are compactly supported with supports contained in $[0,1]$, moreover, they clearly are continuous functions on the whole real line. Next, assuming that $(s,v)\in ]-\infty,-1)\times [\underline{H},\overline{H}]$ and making in (\ref{h:def:phial1}) two integrations by parts, one obtains that, 
\begin{equation*}
\Phi_{\al}(s,v)= (v-1/\al)(v-1/\al-1)\int_{0}^{1} (y-s)^{v-1/\al-2} \psi^{(-2)}(y) dy;
\end{equation*}
thus, the inequality $y-s\ge -s \ge 2^{-1}(1-s)$ for all $y\in[0,1]$, entails that,
\begin{align*}
|\Phi_{\al}(s,v)| & \le  2^{2+1/\al-v} |v-1/\al||v-1/\al-1| \|\psi^{(-2)}\|_{\Li(\R)} (1-s)^{v-1/\al-2} \nonumber \\
& \le 2 \|\psi^{(-2)}\|_{\Li(\R)} (1-s)^{\overline{H}-1/\al-2},
\end{align*}
which means that (\ref{eqah:local:phial}) is satisfied.

Let us now prove that Part $(iii)$ holds. Notice that, in view of Part $(ii)$, one knows that the function $v \mapsto \|\Phi_{\al}(\cdot,v)\|_{\La(\R)}$ is finite on $[\underline{H},\overline{H}]$. Proving that the latter function is continuous over this interval, boils down to show the continuity of $v\mapsto \|\Phi_{\al}(\cdot,v)\|_{\La(\R)}^\al=\int_{\R} |\Phi_{\al}(u,v)|^{\al} du$ over it; namely, for each $\tilde{v}\in [\underline{H},\overline{H}]$ and for every sequence $\{v_n\,:\, n\in\Z_+\}$ of elements of $[\underline{H},\overline{H}]$ converging to $\tilde{v}$, one has, 
\begin{equation}
\label{eqa:leb12}
 \int_{\R} |\Phi_{\al}(u,\tilde{v})|^{\al} du=\lim_{n\rightarrow +\infty} \int_{\R}|\Phi_{\al}(u,v_n)|^{\al} du.
\end{equation}
The equality (\ref{eqa:leb12}) will result from \tcd. On one hand, Part $(i)$ of the Proposition, implies, that for each fixed $u\in\R$,    
\begin{equation}\label{h:phial:maj3}
\lim_{n\rightarrow +\infty} |\Phi_{\al}(u,v_n)|^{\al} = |\Phi(u,\tilde{v})|^{\al}; 
\end{equation}
on the other hand, from Part $(ii)$ of it, entails that the inequality,
\begin{equation}\label{h:phial:maj4}
|\Phi_{\al}(u,v_n)|^{\al} \le c_1^{\al} (1+|u|)^{-\al(2+1/\al-\overline{H})},
\end{equation}
is satisfied, for all $u\in\R$ and every $n\in\Z_+$. Thus, in view of (\ref{h:phial:maj3}), (\ref{h:phial:maj4}), and of the fact that the function $u \mapsto (1+|u|)^{-\al(2+1/\al-\overline{H})}$  belongs to $\Lun(\R)$, one is allowed to obtain (\ref{eqa:leb12}) by applying \tcd.

Finally let us show that (\ref{h:minphial}) holds. It follows from the continuity of the function $v\mapsto \|\Phi_{\al}(\cdot,v)\|_{\La(\R)}$, and from the compactness of the interval $[\underline{H},\overline{H}]$, that there exists $v_0 \in [\underline{H},\overline{H}]$ such that $c_2= \|\Phi_{\al}(\cdot,v_0)\|_{\La(\R)}$. Suppose ad absurdum that $c_2=0$, then one has, for any $u\in\R$, $\Phi_{\al}(u,v_0)=0$; which is equivalent to say that for all $\xi\in\R$, 
\begin{equation}\label{h:ant:phial:maj4}
\widehat{\Phi}_{\al}(\xi,v_0)=0,
\end{equation}
where $\widehat{\Phi}_{\al}(\cdot,v_0)$ denotes the Fourier transform of $\Phi_{\al}(\cdot,v_0)$, defined for each $\xi\in\R$, as,
$
\widehat{\Phi}_{\al}(\xi,v_0)=\int_{\R} e^{-i\xi u}\,\Phi_{\al}(u,v_0) du. 
$
Next, let $\Gamma$ be the usual "gamma function" defined, for all $x\in (0,+\infty)$, by 
$
\Gamma(x)=\int_{0}^{+\infty} t^{x-1} e^{-t} dt.
$
Noticing that the function
$
\big(\Gamma(1+v_0-1/\al)\big)^{-1} \Phi_{\al}(\cdot,v_0),
$
is the right-sided fractional derivative of order  $1+v_0-1/\al$ of the wavelet $\psi$; it follows that (see \cite{samk}), for any $\xi \in \R\setminus\{0\}$,
\begin{equation}
\label{h:ant:phial:maj5}
\widehat{\Phi}_{\al}(\xi,v_0)= \Gamma(1+v_0-1/\al)\frac{e^{i\,\sign(\xi)(1+v_0-1/\al)\frac{\pi}{2}}\widehat{\psi}(\xi)}{|\xi|^{1+v_0-1/\al}}.
\end{equation}
Finally, combining (\ref{h:ant:phial:maj4}) with (\ref{h:ant:phial:maj5}), one obtains that $\widehat{\psi}$ is the zero function, which contradicts the assumption that $\psi$ is not the zero function.
\end{proof}

Now, we are in position to show that Lemma \ref{h:lem:borne:scaletdjk} holds.
\begin{proof}[Proof of Lemma \ref{h:lem:borne:scaletdjk}]
Using (\ref{h:tdjk2}), (\ref{eqa:scparamS}), and the change of variable $u=2^js-k$, one gets that,
\begin{align*}
\|\widetilde{d}_{j,k}\|_{\al}^{\al}& = 2^{-j\al H(k2^{-j})+j} \int_{\R} |\Phi_{\al}(2^js-k,H(k2^{-j}))|^{\al} ds \\
& = 2^{-j\al H(k2^{-j})} \int_{\R} |\Phi_{\al}(u,H(k2^{-j}))|^{\al} du.
\end{align*}
So, it comes that
\begin{equation}\label{h:scaletdjk}
\|\widetilde{d}_{j,k}\|_{\al} = 2^{-jH(k2^{-j})} \|\Phi_{\al}(\cdot,H(k2^{-j}))\|_{\La(\R)}.
\end{equation}
Let us set,
\begin{equation}
\label{eqa:lemEdjk}
c_{1}=\max_{v\in [\underline{H},\overline{H}]} \|\Phi_{\al}(\cdot,v)\|_{\La(\R)}\, \mbox{ and } \, c_{2}=\min_{v\in [\underline{H},\overline{H}]} \|\Phi_{\al}(\cdot,v)\|_{\La(\R)};
\end{equation}
in view of Part $(iii)$ of Proposition~\ref{h:prop:proprietephial}, one has $0<c_2\le c_1<+\infty$. Finally, combining (\ref{h:scaletdjk}) with (\ref{eqa:lemEdjk}),
one deduces that (\ref{h:borne:scaletdjk}) holds.
\end{proof}

Let us now focus on the proof of Proposition~\ref{h:prop:maj1:cov}. One denotes by $\{\widetilde{d}_{j,k}^{\nor} : (j,k)\in\Z^2\}$ the sequence of the normalized\footnote{That is the scale parameters are equal to $1$.} $\sas$ random variables, defined as, 
\begin{equation}\label{h:tdjknor}
\widetilde{d}_{j,k}^{\nor} = \frac{\widetilde{d}_{j,k}}{\|\widetilde{d}_{j,k}\|_{\al}};
\end{equation}
observe that, for all $(j,k,l)\in\Z^3$, one has, 
\begin{equation}\label{h:egalite:covariance}
\big|\cov\big( |\widetilde{d}_{j,k}|^{\beta}, |\widetilde{d}_{j,l}|^{\beta} \big)\big| =\|\widetilde{d}_{j,k}\|_{\al}^{\beta} \|\widetilde{d}_{j,l}\|_{\al}^{\beta}\, \big|\cov\big( |\widetilde{d}_{j,k}^{\nor}|^{\beta}, |\widetilde{d}_{j,l}^{\nor}|^{\beta} \big)\big|.
\end{equation}
Thus, in view of (\ref{h:borne:scaletdjk}), for showing that (\ref{h:maj1:cov}) holds, it is sufficient to prove that,
\begin{equation}
\label{eqa:cov30}
\sup \left\{\big (1+|k-l|\big)^{\lambda}\big|\cov\big( |\widetilde{d}_{j,k}^{\nor}|^{\beta}, |\widetilde{d}_{j,l}^{\nor}|^{\beta} \big)\big|\,:\, (j,k,l)\in\Z^3\right\}<+\infty,
\end{equation}
or equivalently that, there exists a constant $Q_0\in\Z_+$, such that, 
\begin{equation}
\label{eqa:cov31}
\sup \left\{\big (1+|k-l|\big)^{\lambda}\big|\cov\big( |\widetilde{d}_{j,k}^{\nor}|^{\beta}, |\widetilde{d}_{j,l}^{\nor}|^{\beta} \big)\big|\,:\, (j,k,l)\in\Z^3\mbox{ and } |k-l|\ge Q_0\right\}<+\infty.
\end{equation}
Observe that (\ref{eqa:cov31}) reduces to (\ref{eqa:cov30}) when $Q_0=0$; the fact that they are equivalent when $Q_0\ge 1$, mainly results from, Cauchy-Schwarz inequality, which entails that,
\begin{align*}
& \sup \left\{\big (1+|k-l|\big)^{\lambda}\big|\cov\big( |\widetilde{d}_{j,k}^{\nor}|^{\beta}, |\widetilde{d}_{j,l}^{\nor}|^{\beta} \big)\big|\,:\, (j,k,l)\in\Z^3\mbox{ and } |k-l|< Q_0\right\}\\
& \le Q_0^{\lambda}\big(\var(|X_0|^{\beta})\big)^2<+\infty,
\end{align*}
where $X_0$ denotes an arbitrary $\sas$ random variable satisfying $\|X_0\|_{\al}=1$.
It is worth noticing that, in view of (\ref{h:tdjk2}) and of (\ref{h:tdjknor}), $\widetilde{d}_{j,k}^{\nor}$ can be expressed as the $\sas$ stochastic integral,
\begin{equation}
\label{h:def:fjk}
\widetilde{d}_{j,k}^{\nor} = \int_{\R} f_{j,k}(s) \Za{ds},
\end{equation}
where $f_{j,k}$ is the function belonging to the unit sphere of $L^\al (\R)$, defined for all $s\in\R$, as,
\begin{equation}
\label{h:fjk}
f_{j,k}(s)= \theta_{j,k}\,2^{j/\al}\Phi_{\al}(2^js-k,H(k2^{-j})), 
\end{equation}
with
\begin{equation}\label{h:thetajk}
\theta_{j,k}=\frac{2^{-jH(k2^{-j})}}{\|\widetilde{d}_{j,k}\|_{\al}}.
\end{equation}
Observe that (\ref{h:borne:scaletdjk}) implies that, 
\begin{equation}
\label{eqa:tetasup}
\theta_{\mbox{{\tiny sup}}}=\sup\left\{\theta_{j,k}\,:\,(j,k)\in\Z^2\right\}<+\infty,
\end{equation}
and
$$
\theta_{\mbox{{\tiny inf}}}=\inf\left\{\theta_{j,k}\,:\,(j,k)\in\Z^2\right\}>0.
$$
In order to establish the existence of a constant $Q_0\in\Z_+$, such that (\ref{eqa:cov31}) holds, one will use the following lemma, which is a consequence of 
\cite[Theorem 2.4]{pipiras2007bounds}.
\begin{Lem} 
\label{leme:PTA08}
\cite{pipiras2007bounds} Let $f^1$ and $f^2$ be two functions of $L^\al (\R)$ satisfying, for some $c'\in (0,1)$ and $c''\in (0,1)$, the following two Conditions $(\mathcal{A}_1)$ and $(\mathcal{A}_2)$:
$$
 \|f^1\|_{\La(\R)}^{\al/2} \|f^2\|_{\La(\R)}^{\al/2} - \int_{\R} |f^1(s)f^2(s)|^{\al/2} ds \ge c' \|f^1\|_{\La(\R)}^{\al/2}\|f^2\|_{\La(\R)}^{\al/2};
\eqno(\mathcal{A}_1)
$$
for every $(x,y)\in\R^2$,
$$ 
\|xf^1+yf^2\|_{\La(\R)}^{\al} \ge c'' \Big( \|xf^1\|_{\La(\R)}^{\al} + \|yf^2\|_{\La(\R)}^{\al} \Big).
\eqno(\mathcal{A}_2)
$$
Then, there exists a constant $c>0$, only depending on $c'$, $c''$, $\beta$, $\|f^1\|_{\La(\R)}$ and $\|f^2\|_{\La(\R)}$, such that one has,
$$
\left|\cov\left(\Big|\int_{\R} f^{1}(s)\Za{ds}\Big|^{\beta},\Big|\int_{\R} f^{2}(s)\Za{ds}\Big|^{\beta}\right)\right|\le c\Big( [f^1,f^2]_1+[f^1,f^2]_2 \Big)
$$
where, 
\begin{equation}\label{h:def:crochet2}
[f^1,f^2]_2= \int_{\R} |f^1(s)f^2(s)|^{\al/2} ds,
\end{equation}
\begin{equation}\label{h:def:crochet1}
[f^1,f^2]_1= [f^1,f^2]_1^{*} + [f^2,f^1]_1^{*},
\end{equation}
and 
\begin{equation}\label{h:def:crochet11}
[f^1,f^2]_1^{*}= \int_{\R} |f^1(s)|^{\al-1} |f^2(s)| ds.
\end{equation}
\end{Lem}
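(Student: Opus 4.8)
The plan is to follow the Fourier-analytic strategy underlying \cite[Theorem 2.4]{pipiras2007bounds}. Write $X_i=\int_\R f^i(s)\,\Za{ds}$ for $i\in\{1,2\}$. The pair $(X_1,X_2)$ is jointly $\sas$, so its characteristic function is explicit and, by symmetry, real-valued: for all $(u,v)\in\R^2$,
\begin{equation*}
\ESP\,e^{i(uX_1+vX_2)}=\exp\big(-\|uf^1+vf^2\|_{\La(\R)}^{\al}\big).
\end{equation*}
Since $\beta\in(0,\al/4)\subset(0,\al/2)$, the Cauchy--Schwarz inequality together with (\ref{eqa:equivmoS}) gives $\ESP\big(|X_1|^\beta|X_2|^\beta\big)\le\big(\ESP|X_1|^{2\beta}\big)^{1/2}\big(\ESP|X_2|^{2\beta}\big)^{1/2}<+\infty$, so that the covariance on the left-hand side is well defined.

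Next I would linearise the power function by means of the classical identity, valid for every $x\in\R$ and every exponent in $(0,2)$,
\begin{equation*}
|x|^\beta=c_\beta\int_\R\frac{1-\cos(tx)}{|t|^{1+\beta}}\,dt,\qquad c_\beta=\Big(\int_\R\frac{1-\cos t}{|t|^{1+\beta}}\,dt\Big)^{-1}\in(0,+\infty).
\end{equation*}
Applying this to both factors and using Tonelli's theorem for the resulting nonnegative integrands, one writes $\ESP(|X_1|^\beta|X_2|^\beta)$ and $\ESP|X_1|^\beta\,\ESP|X_2|^\beta$ as convergent double integrals; subtracting them, the constant and single-cosine contributions cancel at the level of the numerator (which is precisely what makes the combined integrand integrable), and one obtains
\begin{equation*}
\cov\big(|X_1|^\beta,|X_2|^\beta\big)=c_\beta^2\int_\R\int_\R\frac{K(u,v)}{|u|^{1+\beta}|v|^{1+\beta}}\,du\,dv,
\end{equation*}
where $K(u,v)=\ESP[\cos(uX_1)\cos(vX_2)]-\ESP[\cos(uX_1)]\,\ESP[\cos(vX_2)]$. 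Expanding $\cos(uX_1)\cos(vX_2)=\tfrac12\big(\cos(uX_1-vX_2)+\cos(uX_1+vX_2)\big)$ and inserting the joint characteristic function makes $K$ fully explicit,
\begin{equation*}
K(u,v)=\tfrac12\Big(e^{-\|uf^1-vf^2\|_{\La(\R)}^{\al}}+e^{-\|uf^1+vf^2\|_{\La(\R)}^{\al}}\Big)-e^{-|u|^\al\|f^1\|_{\La(\R)}^{\al}-|v|^\al\|f^2\|_{\La(\R)}^{\al}}.
\end{equation*}

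The core of the argument is a pointwise estimate of $|K(u,v)|$. Writing $S=|u|^\al\|f^1\|_{\La(\R)}^{\al}+|v|^\al\|f^2\|_{\La(\R)}^{\al}$ and $R_\pm=\|uf^1\pm vf^2\|_{\La(\R)}^{\al}$, Condition $(\mathcal{A}_2)$ applied with the pairs $(u,\pm v)$ yields $R_\pm\ge c''S$, and since $c''<1$ the elementary bound $|e^{-x}-e^{-y}|\le e^{-\min(x,y)}|x-y|$ gives
\begin{equation*}
|K(u,v)|\le\tfrac12\,e^{-c''S}\big(|R_--S|+|R_+-S|\big).
\end{equation*}
The exponent differences are controlled through two convexity inequalities for $t\mapsto|t|^\al$ with $1<\al<2$, namely $\big||a\pm b|^\al-|a|^\al-|b|^\al\big|\le C\big(|a|^{\al-1}|b|+|a||b|^{\al-1}\big)$ and $\big||a\pm b|^\al-|a|^\al-|b|^\al\big|\le C|ab|^{\al/2}$. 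Substituting $a=uf^1(s)$, $b=vf^2(s)$ and integrating in $s$ bounds $|R_\pm-S|$ either by $C\big(|u|^{\al-1}|v|\,[f^1,f^2]_1^{*}+|u|\,|v|^{\al-1}\,[f^2,f^1]_1^{*}\big)$ or by $C\,|uv|^{\al/2}\,[f^1,f^2]_2$. Hence $|K|$ admits two companion pointwise bounds, one proportional to $e^{-c''S}$ times the cross quantities reconstructing $[f^1,f^2]_1$, the other to $e^{-c''S}|uv|^{\al/2}[f^1,f^2]_2$. Here $(\mathcal{A}_2)$ is exactly the hypothesis producing the rapidly decaying factor $e^{-c''S}$ that will make the $(u,v)$-integration converge, while $(\mathcal{A}_1)$, a strict Cauchy--Schwarz (non-collinearity) condition, is what guarantees that the constants remain uniform over the admissible pairs $(f^1,f^2)$.

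Finally I would insert these bounds into the double integral and split $\R^2$ into the neighbourhood $\{|u|<1,\,|v|<1\}$ of the origin and its complement. On the neighbourhood I use the $|uv|^{\al/2}$ bound, which produces $[f^1,f^2]_2$ and leads to $\int_{|u|<1}|u|^{\al/2-1-\beta}\,du\int_{|v|<1}|v|^{\al/2-1-\beta}\,dv$, convergent precisely because $\beta<\al/2$. On the complement, at least one of $|u|,|v|$ is $\ge 1$, so no singularity is met in the variable carrying the power $\al-1$, and the $[f^1,f^2]_1$ bound yields products of one-dimensional integrals of the type $\int_{\R}|t|^{\gamma}e^{-c''\|f^i\|_{\La(\R)}^{\al}|t|^\al}\,dt$, all finite thanks to the decaying factor and depending only on $\al,\beta,c''$ and $\|f^i\|_{\La(\R)}$. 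Summing the two contributions gives the asserted inequality with $c$ depending only on $c',c'',\beta,\|f^1\|_{\La(\R)}$ and $\|f^2\|_{\La(\R)}$. I expect the main obstacle to be exactly this partition of the $(u,v)$-plane: the estimate producing $[f^1,f^2]_1$ is not integrable near the origin unless $\beta<\al-1$, so the companion estimate producing $[f^1,f^2]_2$ is indispensable there, and the delicate point is to stitch the two pointwise bounds over complementary regions while keeping every constant uniform, i.e. depending only on the quantities listed in the statement.
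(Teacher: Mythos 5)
The paper does not prove this lemma at all: it is imported verbatim as a consequence of \cite[Theorem~2.4]{pipiras2007bounds}, so there is no internal proof to compare against. Your reconstruction follows exactly the strategy underlying that reference (cosine representation of $|x|^{\beta}$, explicit joint characteristic function, the bound $|e^{-x}-e^{-y}|\le e^{-\min(x,y)}|x-y|$ combined with the two companion estimates $\big||a\pm b|^{\al}-|a|^{\al}-|b|^{\al}\big|\le C\big(|a|^{\al-1}|b|+|a||b|^{\al-1}\big)$ and $\le C|ab|^{\al/2}$), and all of those ingredients are correct. The gap is in the very last step, the partition of the $(u,v)$-plane. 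On the strip $\{|u|\ge 1,\ |v|<1\}$ you invoke the $[f^1,f^2]_1$ bound and claim that ``no singularity is met in the variable carrying the power $\al-1$''; but $[f^1,f^2]_1$ is symmetrized, so the bound on $|R_{\pm}-S|$ contains \emph{both} terms $|u|^{\al-1}|v|\,[f^1,f^2]_1^{*}$ and $|u|\,|v|^{\al-1}\,[f^2,f^1]_1^{*}$. After division by $|u|^{1+\beta}|v|^{1+\beta}$ the second term contributes $|v|^{\al-2-\beta}$ near $v=0$, which is not integrable as soon as $\beta\ge\al-1$. Since $\beta$ ranges over $(0,\al/4)$ and $\al-1<\al/4$ exactly when $\al<4/3$, your partition genuinely fails for $\al\in(1,4/3)$ and $\beta\in[\al-1,\al/4)$ (and a fortiori for the extension to $\beta\in[\al/4,\al/2)$ contemplated in Remark~1.1). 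The fix is mechanical but must be stated: use the $|uv|^{\al/2}$ bound on the whole cross $\{|u|<1\}\cup\{|v|<1\}$ (or, equivalently, treat each of the two terms of the $[\cdot\,,\cdot]_1$ bound with its own partition, reserving it for the region where \emph{its} singular variable is bounded away from $0$); on $\{|u|\ge1,\ |v|\ge1\}$ the $[\cdot\,,\cdot]_1$ bound is then harmless thanks to the factor $e^{-c''S}$ supplied by $(\mathcal{A}_2)$.

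A second, smaller issue: Condition $(\mathcal{A}_1)$ is never actually used in your argument --- it appears only in a vague sentence about ``uniformity of constants''. Either it is genuinely dispensable for this special case (in which case you should say so and prove the lemma without it), or it enters somewhere you have not identified; as written, an unused hypothesis in a proof that claims the constant depends on $c'$ is a warning sign that a case has been skipped. You should also record explicitly that the covariance is well defined because $2\beta<\al$ (which you do via Cauchy--Schwarz), and that the passage from the difference of two convergent nonnegative double integrals to the single integral of $K(u,v)/(|u|^{1+\beta}|v|^{1+\beta})$ is legitimate precisely because the subsequent pointwise bounds furnish an absolutely convergent majorant.
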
 

\begin{Rem}
\label{remnew:condsa}
The following two lemmas, respectively show that the functions $f_{j,k}$ and $f_{j,l}$, defined through (\ref{h:fjk}), satisfy, uniformly in $(j,k,l)\in\Z^3$,
the Conditions $(\mathcal{A}_1)$ and $(\mathcal{A}_2)$ in Lemma~\ref{leme:PTA08}, provided that $|k-l|\ge Q_0$, where $Q_0\in\Z_+$ is a large enough constant.
\end{Rem}

\begin{Lem}\label{h:lem:hypA1}
Let $Q_0 \in\Z_+$ be arbitrary but large enough. Then, there exists a constant $c_{1}\in (0,1)$, non depending on $(j,k,l)$, such that the inequality, 
\begin{equation}\label{h:hypA1}
1 - \int_{\R} |f_{j,k}(s)f_{j,l}(s)|^{\al/2} ds \ge c_{1},
\end{equation}
holds, for all $(j,k,l)\in\Z^3$ satisfying $|k-l|\ge Q_0$.
\end{Lem}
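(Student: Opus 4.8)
The plan is to turn (\ref{h:hypA1}) into a statement about how small the overlap integral $\int_{\R}|f_{j,k}(s)f_{j,l}(s)|^{\al/2}\,ds$ becomes when $k$ and $l$ are far apart. Since each $f_{j,k}$ lies on the unit sphere of $\La(\R)$, so that $\|f_{j,k}\|_{\La(\R)}^{\al/2}=\|f_{j,l}\|_{\La(\R)}^{\al/2}=1$, proving (\ref{h:hypA1}) is exactly producing a constant $c_1\in(0,1)$ with $\int_{\R}|f_{j,k}(s)f_{j,l}(s)|^{\al/2}\,ds\le 1-c_1$ for all $(j,k,l)$ satisfying $|k-l|\ge Q_0$; this is precisely Condition $(\mathcal{A}_1)$ of Lemma~\ref{leme:PTA08} specialized to $f^1=f_{j,k}$ and $f^2=f_{j,l}$.

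First I would eliminate the scale $j$. Substituting (\ref{h:fjk}) and performing the change of variable $u=2^js-k$ (so that $2^js-l=u+(k-l)$), and then using $\theta_{j,k}\le\theta_{\mbox{{\tiny sup}}}<+\infty$ from (\ref{eqa:tetasup}), one gets
\[
\int_{\R}|f_{j,k}(s)f_{j,l}(s)|^{\al/2}\,ds\le\theta_{\mbox{{\tiny sup}}}^{\al}\int_{\R}\big|\Phi_{\al}(u,H(k2^{-j}))\big|^{\al/2}\,\big|\Phi_{\al}(u+(k-l),H(l2^{-j}))\big|^{\al/2}\,du.
\]
The decisive observation is that the two Hölder values $H(k2^{-j})$ and $H(l2^{-j})$ always lie in $[\underline{H},\overline{H}]$, so Part~$(ii)$ of Proposition~\ref{h:prop:proprietephial} applies uniformly in them: because $\Phi_{\al}(\cdot,v)$ vanishes on $]1,+\infty[$ and the supremum in (\ref{h:local:phial}) is finite, there is a finite constant $\kappa$ (the finite supremum defined in (\ref{h:local:phial})) with $|\Phi_{\al}(s,v)|\le\kappa(1+|s|)^{-(2+1/\al-\overline{H})}$ for all $s\in\R$ and $v\in[\underline{H},\overline{H}]$. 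Writing $p=\tfrac{\al}{2}(2+1/\al-\overline{H})$, the matter is thus reduced to the $j,k,l$-independent (apart from the shift) estimate of
\[
\int_{\R}(1+|u|)^{-p}\,(1+|u+(k-l)|)^{-p}\,du.
\]

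Next I would note that $p=\al(1-\overline{H}/2)+\tfrac12>1$, since $\al>1$ and $\overline{H}<1$; in particular $(1+|\cdot|)^{-p}\in\Lun(\R)$. A standard overlap estimate then finishes the argument: for $m=k-l>0$, splitting the line at $u=-m/2$, on $u\ge -m/2$ one has $(1+|u+m|)^{-p}\le(1+m/2)^{-p}$ while on $u\le -m/2$ one has $(1+|u|)^{-p}\le(1+m/2)^{-p}$, and the remaining factor integrates to $\|(1+|\cdot|)^{-p}\|_{\Lun(\R)}$; together with the symmetric case $m<0$ this yields a bound $C(1+|k-l|)^{-p}$ that is uniform in $j$ and in the admissible Hölder values. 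Hence, choosing $Q_0\in\Z_+$ large enough that $\theta_{\mbox{{\tiny sup}}}^{\al}\kappa^{\al}C(1+Q_0)^{-p}\le\tfrac12$, one obtains $\int_{\R}|f_{j,k}f_{j,l}|^{\al/2}\le\tfrac12$ whenever $|k-l|\ge Q_0$, and (\ref{h:hypA1}) holds with $c_1=\tfrac12$. The only genuinely technical point, and the one I would be most careful about, is that the decay of the overlap integral is \emph{uniform} in $j$ and in $v_1,v_2\in[\underline{H},\overline{H}]$; this uniformity is exactly what the uniform localization estimate (\ref{h:local:phial}) of $\Phi_{\al}$ provides, so no further work on $\Phi_\al$ is needed here.
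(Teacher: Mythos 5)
Your proof is correct and follows essentially the same route as the paper: reduce to the overlap integral of the two rescaled kernels via (\ref{h:fjk}) and (\ref{eqa:tetasup}), use the uniform localization (\ref{h:local:phial}) to get a bound of the form $c\,(1+|k-l|)^{-\al/2(2+1/\al-\overline{H})}$, and choose $Q_0$ so that this is at most $1/2$. The only cosmetic difference is that you prove the needed convolution-decay estimate directly (in the symmetric case $\delta=\gamma=p>1$) by splitting at $u=-m/2$, whereas the paper packages it as Lemma~\ref{h:lem:maj:phi2jkl} via the more general Lemma~\ref{h:lem:maj1:rq}.
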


\begin{Lem}\label{h:lem:hypA2}
Let $Q_0 \in\Z_+$ be arbitrary but large enough. Then, there exists a constant $c_{2}\in (0,1)$, non depending on $z$, neither $(j,k,l)$, such that the inequality,
\begin{equation}\label{h:equi:hypA2}
\|f_{j,k}+zf_{j,l}\|_{\La(\R)}^{\al} \ge c_{2} \Big( 1 + |z|^{\al} \Big).
\end{equation}
holds, for all $z\in [-1,1]$ and for each $(j,k,l)\in\Z^3$ satisfying $|k-l|\ge Q_0$.
\end{Lem}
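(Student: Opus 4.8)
The plan is to reduce Lemma~\ref{h:lem:hypA2} to a \emph{uniform mass–localization} property of the functions $f_{j,k}$, and then to derive Condition $(\mathcal{A}_2)$ from it by two applications of the reverse triangle inequality in $\La$. Throughout I use that, by (\ref{h:fjk}), each $f_{j,k}$ lies on the unit sphere of $\La(\R)$, i.e. $\|f_{j,k}\|_{\La(\R)}^\al=1$, and that, by Part $(ii)$ of Proposition~\ref{h:prop:proprietephial}, $\supp\big(\Phi_\al(\cdot,v)\big)\subseteq\,]-\infty,1]$ for every $v$; hence $\supp(f_{j,k})\subseteq\,]-\infty,(k+1)2^{-j}]$.

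First I would establish the localization. Suppose $l<k$ and set $E=\{s>(l+1)2^{-j}\}$. On $E$ one has $f_{j,l}=0$, while the change of variable $u=2^js-k$, together with the identity $\theta_{j,k}^\al=\|\Phi_\al(\cdot,H(k2^{-j}))\|_{\La(\R)}^{-\al}$ coming from (\ref{h:scaletdjk}) and (\ref{h:thetajk}), gives
\begin{equation*}
\int_{E}|f_{j,k}(s)|^\al\,ds=\frac{\int_{1-(k-l)}^{1}|\Phi_\al(u,H(k2^{-j}))|^\al\,du}{\|\Phi_\al(\cdot,H(k2^{-j}))\|_{\La(\R)}^{\al}}=1-\frac{\int_{-\infty}^{1-(k-l)}|\Phi_\al(u,H(k2^{-j}))|^\al\,du}{\|\Phi_\al(\cdot,H(k2^{-j}))\|_{\La(\R)}^{\al}}.
\end{equation*}
The tail bound (\ref{h:local:phial}), $|\Phi_\al(u,v)|\le c_1(1+|u|)^{-(2+1/\al-\overline{H})}$, shows the numerator of the last fraction is at most $c_1^\al\int_{k-l}^{+\infty}w^{-\al(2+1/\al-\overline{H})}\,dw$, which tends to $0$ as $|k-l|\to+\infty$ uniformly in $v\in[\underline{H},\overline{H}]$, since $\al(2+1/\al-\overline{H})>1$; the denominator is bounded below by $c_2^\al>0$ by (\ref{h:minphial}). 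Thus, for any fixed $\epsilon\in(0,1)$ there is $Q_0\in\Z_+$, independent of $(j,k,l)$, with $\int_E|f_{j,k}|^\al\ge 1-\epsilon$ whenever $|k-l|\ge Q_0$, while $\int_{E^c}|f_{j,l}|^\al=1$ because $\supp(f_{j,l})\subseteq E^c$. In the case $l>k$ I would instead take $E=\{s\le(k+1)2^{-j}\}$ and argue symmetrically. Either way one obtains a measurable set $E$ with $\|f_{j,k}\|_{\La(E)}^\al\ge 1-\epsilon$ and $\|f_{j,l}\|_{\La(E^c)}^\al\ge 1-\epsilon$, equivalently $\|f_{j,k}\|_{\La(E^c)}^\al\le\epsilon$ and $\|f_{j,l}\|_{\La(E)}^\al\le\epsilon$.

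Granting this, I would conclude as follows. Splitting over $E$ and $E^c$ and applying the reverse triangle inequality in $\La(E)$ and $\La(E^c)$ yields, for every $z\in[-1,1]$,
\begin{equation*}
\|f_{j,k}+zf_{j,l}\|_{\La(E)}\ge (1-\epsilon)^{1/\al}-|z|\,\epsilon^{1/\al}\ge(1-\epsilon)^{1/\al}-\epsilon^{1/\al},
\end{equation*}
and
\begin{equation*}
\|f_{j,k}+zf_{j,l}\|_{\La(E^c)}\ge |z|\,(1-\epsilon)^{1/\al}-\epsilon^{1/\al}.
\end{equation*}
Choosing $\epsilon$ small enough that $(1-\epsilon)^{1/\al}-\epsilon^{1/\al}>0$, the first inequality produces a positive constant lower bound (the ``$1$'' in (\ref{h:equi:hypA2})), and the second, after a short case distinction on whether $|z|$ is below or above a fixed threshold, produces a term comparable to $|z|^\al$. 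Adding the two contributions, $\|f_{j,k}+zf_{j,l}\|_{\La(\R)}^\al=\|f_{j,k}+zf_{j,l}\|_{\La(E)}^\al+\|f_{j,k}+zf_{j,l}\|_{\La(E^c)}^\al\ge c_2(1+|z|^\al)$ for a constant $c_2\in(0,1)$ depending only on $\epsilon$, $\al$ and $\overline{H}$, hence on none of $z,j,k,l$; this is exactly (\ref{h:equi:hypA2}).

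The main obstacle is the uniform localization step, and its whole force comes from Part $(ii)$ of Proposition~\ref{h:prop:proprietephial}: the \emph{exact} one-sided support $]-\infty,1]$ makes one of the two functions vanish identically on the clean half-line $E$, while the integrable tail decay ensures the other function leaks only a vanishing fraction of its $\La$-mass into $E$. The delicate point is uniformity in $j$, which is automatic because, after the rescaling $u=2^js-k$, the index $j$ disappears and every quantity is an integral of $\Phi_\al(\cdot,v)$ with $v=H(k2^{-j})\in[\underline{H},\overline{H}]$, for which the constants $c_1$ in (\ref{h:local:phial}) and $c_2$ in (\ref{h:minphial}), together with the bounds on $\theta_{j,k}$ in (\ref{eqa:tetasup}), are all uniform over the compact parameter interval.
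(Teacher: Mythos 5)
Your proof is correct, but it follows a genuinely different route from the paper's. The paper works pointwise: it writes $\|f_{j,k}+zf_{j,l}\|_{\La(\R)}^{\al}=\int_{\R}|(f_{j,k}+zf_{j,l})^2|^{\al/2}$, bounds this below by $\int_{\R}|(|f_{j,k}|-|z||f_{j,l}|)^2|^{\al/2}$, expands the square, uses the subadditivity inequality $|\zeta_1-\zeta_2|^{\al/2}\ge\zeta_1^{\al/2}-\zeta_2^{\al/2}$ together with the concavity of $x\mapsto x^{\al/2}$ to extract $2^{\al/2-1}(1+|z|^{\al})$, and then absorbs the cross term $2^{\al/2}|z|^{\al/2}\int_{\R}|f_{j,k}f_{j,l}|^{\al/2}$ using the \emph{same} overlap estimate (Lemma~\ref{h:lem:maj:phi2jkl} via (\ref{h:maj1:fjkfjl})) already used for Condition $(\mathcal{A}_1)$; this yields the explicit constant $c_2=2^{\al/2-2}$ with no case distinction on $z$. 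You instead prove an $\La$-mass localization statement --- that for $|k-l|\ge Q_0$ the functions $f_{j,k}$ and $f_{j,l}$ each carry at least $1-\epsilon$ of their unit mass on complementary half-lines, which follows correctly from the one-sided support and the tail decay in (\ref{h:local:phial}) together with the lower bound (\ref{h:minphial}) --- and then apply the reverse triangle inequality (valid since $\al>1$) separately on $E$ and $E^c$ before adding the two contributions. Your argument is more geometric and makes transparent \emph{why} the two functions behave like an "independent pair" in $\La$, at the cost of a short case distinction on $|z|$ and a less explicit constant; the paper's argument recycles machinery already in place, avoids any support considerations beyond decay, and produces the constant in one pass. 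All the individual steps you invoke (the identity $\theta_{j,k}=\|\Phi_{\al}(\cdot,H(k2^{-j}))\|_{\La(\R)}^{-1}$, the additivity of $\|\cdot\|_{\La}^{\al}$ over disjoint sets, the uniformity in $j$ after rescaling) check out.
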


\begin{proof}[Proof of Remark~\ref{remnew:condsa}] The equalities
\begin{equation}
\label{eqa1:new:condsa}
\|f_{j,k}\|_{L^\al (\R)}=\|f_{j,l}\|_{L^\al (\R)}=1,
\end{equation}
clearly entail that the inequality (\ref{h:hypA1}), can be rewritten as,
$$
\|f_{j,k}\|_{L^\al (\R)}^{\al/2}\|f_{j,l}\|_{L^\al (\R)}^{\al/2}- \int_{\R} |f_{j,k}(s)f_{j,l}(s)|^{\al/2} ds \ge c_{1}\|f_{j,k}\|_{L^\al (\R)}^{\al/2}\|f_{j,l}\|_{L^\al(\R)}^{\al/2};
$$
which shows that the functions $f_{j,k}$ and $f_{j,l}$, satisfy, uniformly in $(j,k,l)\in\Z^3$ (i.e. the constant $c_1$ does not depend on $(j,k,l)$),
the Condition $(\mathcal{A}_1)$ in Lemma~\ref{leme:PTA08}. Next observe that the Condition $(\mathcal{A}_2)$ in the lemma, does not become weaker, when one restricts to 
the $(x,y)\in\R^2$ such that $0<|y|\le |x|$; indeed, it clearly holds when $x=0$ or $y=0$, on the other hand the indices $k$ and $l$, play in it symmetric roles, thus they can be interchanged. So, let us assume that $x$ and $y$ are two arbitrary real numbers satisfying $0<|y|\le |x|$, then setting in (\ref{h:equi:hypA2})
$z=y/x$, and using (\ref{eqa1:new:condsa}), one gets that,
$$ 
\|xf_{j,k}+yf_{j,l}\|_{\La(\R)}^{\al} \ge c_2 \Big( \|xf_{j,k}\|_{\La(\R)}^{\al} + \|yf_{j,l}\|_{\La(\R)}^{\al} \Big);
$$
which shows that the functions $f_{j,k}$ and $f_{j,l}$, satisfy, uniformly in $(j,k,l)\in\Z^3$ (i.e. the constant $c_2$ does not depend on $(j,k,l)$),
the Condition $(\mathcal{A}_2)$ in Lemma~\ref{leme:PTA08}.
\end{proof}

The proofs of Lemmas~\ref{h:lem:hypA1}~and~\ref{h:lem:hypA2}, mainly rely on the following result, which also, will allow us later, to bound $[f_{j,k},f_{j,l}]_2$ (see (\ref{h:def:crochet2}) for the definition of $[\cdot,\cdot]_2$).

\begin{Lem}\label{h:lem:maj:phi2jkl}
For all $(j,k,l)\in\Z^3$, one sets,
\begin{equation}
\label{h:def:phi2jkl}
\varphi_1(j,k,l) = \int_{\R} \big|\Phi_{\al}(u-k,H(k2^{-j})) \Phi_{\al}(u-l,H(l2^{-j}))\big|^{\al/2} du.
\end{equation}
Let $\epsilon>0$ be arbitrarily small and fixed; there exists a constant $c>0$ which does not depend on $(j,k,l)$, such that the inequality,
\begin{equation}
\label{h:maj:phi2jkl}
\varphi_1(j,k,l)\le c\big(1+|k-l|\big)^{-\al/2(2+1/\al-\overline{H})},
\end{equation}
holds for every $(j,k,l)\in\Z^3$.
\end{Lem}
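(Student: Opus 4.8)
The plan is to reduce $\varphi_1(j,k,l)$ to an elementary convolution-type integral of two shifted power functions, using the uniform localization estimate of Part~$(ii)$ of Proposition~\ref{h:prop:proprietephial}. First I would note that, since $H(k2^{-j})$ and $H(l2^{-j})$ both belong to $[\underline{H},\overline{H}]$, the bound (\ref{h:local:phial}) together with the one-sided support property $\supp(\Phi_{\al}(\cdot,v))\subseteq\,]-\infty,1]$ (which makes the inequality hold trivially wherever $\Phi_{\al}$ vanishes) yields, for \emph{every} $u\in\R$,
\begin{equation*}
\big|\Phi_{\al}(u-k,H(k2^{-j}))\big|\le c_1\big(1+|u-k|\big)^{-(2+1/\al-\overline{H})},
\end{equation*}
and the analogous inequality with $k$ replaced by $l$. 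Raising the product of the two moduli to the power $\al/2$ and inserting it into (\ref{h:def:phi2jkl}) gives
\begin{equation*}
\varphi_1(j,k,l)\le c_1^{\al}\int_{\R}\big(1+|u-k|\big)^{-a}\big(1+|u-l|\big)^{-a}\,du,\qquad a=\frac{\al}{2}\Big(2+\frac{1}{\al}-\overline{H}\Big),
\end{equation*}
so that the statement reduces to the estimate $\int_{\R}(1+|u-k|)^{-a}(1+|u-l|)^{-a}\,du\le c(1+|k-l|)^{-a}$.

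Next I would establish this convolution estimate. The crucial observation is that $a>1$: indeed $a=\al\big(1-\overline{H}/2\big)+1/2>\al/2+1/2>1$, since $\overline{H}<1$ and $\al\in(1,2)$. Hence $\int_{\R}(1+|u|)^{-a}\,du<+\infty$, and the integral can be controlled by splitting $\R$ at the midpoint between $k$ and $l$. Writing $m=|k-l|$, on the half-line where $|u-l|\ge m/2$ one bounds the $l$-factor by $(1+m/2)^{-a}\le 2^{a}(1+m)^{-a}$ and integrates the remaining $k$-factor over all of $\R$ (finite because $a>1$); the symmetric estimate holds on the other half-line, where $|u-k|\ge m/2$. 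Adding the two contributions produces $\int_{\R}(1+|u-k|)^{-a}(1+|u-l|)^{-a}\,du\le c(1+m)^{-a}$, which is exactly (\ref{h:maj:phi2jkl}).

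I expect the only genuinely delicate point to be the verification that $a>1$: this is precisely where the standing hypotheses $\al>1$ and $\overline{H}<1$ are used, and without integrability of each power factor the midpoint-splitting argument would break down. Everything else is routine, and the uniformity in $(j,k,l)$ is automatic: the constant depends only on $a$ (hence only on $\al$ and $\overline{H}$), the right-hand side involves $k$ and $l$ solely through $|k-l|$, and the sampled values $H(k2^{-j})$, $H(l2^{-j})$ enter only through their membership in $[\underline{H},\overline{H}]$, so no dependence on $j$ survives. If one prefers not to rely on the exact margin $a-1$, the parameter $\epsilon>0$ of the statement may be fixed in $(0,a-1)$ and the $k$-factor split as $\big(1+|u-k|\big)^{-a}=\big(1+|u-k|\big)^{-(1+\epsilon)}\big(1+|u-k|\big)^{-(a-1-\epsilon)}$, the last factor being $\le 1$; this reproduces the same bound without altering the exponent in (\ref{h:maj:phi2jkl}).
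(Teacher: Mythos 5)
Your proof is correct. The first step --- extending the localization bound (\ref{h:local:phial}) to all of $\R$ via the support property, and reducing $\varphi_1(j,k,l)$ to the integral $\int_{\R}(1+|u-k|)^{-a}(1+|u-l|)^{-a}\,du$ with $a=\tfrac{\al}{2}(2+1/\al-\overline{H})$ --- is exactly what the paper does (after the change of variable $x=u-l$). Where you diverge is in the treatment of that convolution-type integral: the paper invokes its separate Lemma~\ref{h:lem:maj1:rq}, which is stated for possibly \emph{different} exponents $\delta,\gamma\ge 0$ under the weaker hypothesis $\max\{\delta,\gamma\}>1$ and yields decay $(1+|q|)^{-\min\{\delta,\gamma\}}$; it is proved there by the rescaling $x=q^{-1}u$ and a six-piece case analysis. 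You instead use the standard midpoint splitting, which is shorter and self-contained but requires \emph{both} exponents to exceed $1$. That stronger hypothesis happens to hold here, since both exponents equal $a=\al(1-\overline{H}/2)+1/2>\al/2+1/2>1$ (your verification of this is the right one and is where $\al>1$, $\overline{H}<1$ enter). The trade-off is that the paper's more general lemma is reused for $\varphi_2(j,k,l)$ in Lemma~\ref{h:lem:maj:phi1jkl}, where the exponents are $(\al-1)(2+1/\al-\overline{H})$ and $2+1/\al-\overline{H}$ and the smaller one can drop below $1$ when $\al$ is close to $1$; your midpoint argument would not cover that case, so it buys simplicity for the present statement at the cost of generality. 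One cosmetic remark: the $\epsilon>0$ in the statement plays no role in the conclusion (it appears to be a leftover in the paper as well), so your closing paragraph about choosing $\epsilon\in(0,a-1)$ is unnecessary, though harmless.
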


In order to show that the previous lemma holds, one needs the following result.

\begin{Lem}\label{h:lem:maj1:rq}
Let $\delta$ and $\gamma$ be  two nonnegative real numbers satisfying,
\begin{equation}\label{h:hyp:maj1:rq}
\max\{\delta,\gamma\}>1.
\end{equation}
For each $q\in\Z$, one sets
\begin{equation}\label{h:def:rq}
r_q(\delta,\gamma)=\int_{\R} \big(1+|u-q|\big)^{-\delta} \big(1+|u|\big)^{-\gamma} du.
\end{equation}
Then, one has,
\begin{equation}\label{h:maj1:rq}
\sup_{q\in\Z} \left\{ \big(1+|q|\big)^{\min\{\delta,\gamma\}} \,r_q(\delta,\gamma) \right\} < +\infty.
\end{equation}
\end{Lem}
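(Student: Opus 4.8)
The plan is to prove Lemma~\ref{h:lem:maj1:rq} by splitting the domain of integration in $u$ according to which of the two factors is ``small'' (i.e.\ which singularity $u$ is close to), and to show that each regime contributes $O\big((1+|q|)^{-\min\{\delta,\gamma\}}\big)$. By symmetry of the roles played by the two shifts (the change of variable $u\mapsto u+q$ turns $r_q(\delta,\gamma)$ into $r_{-q}(\gamma,\delta)$), it suffices to treat $q\ge 0$ and, without loss of generality, one may assume $\gamma>1$ (the case $\delta>1$ follows by the same symmetry). It is also enough to establish the bound for $|q|$ large, since for bounded $q$ the quantity $(1+|q|)^{\min\{\delta,\gamma\}}r_q(\delta,\gamma)$ is a continuous function of $q$ taking finitely many finite values (the integral $r_q$ is finite whenever $\max\{\delta,\gamma\}>1$, because at least one factor is integrable at infinity while the other is locally integrable against it).

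First I would fix $q\ge 2$ and decompose $\R=A\cup B\cup C$, where $A=(-\infty,q/2]$, $B=(q/2,\,3q/2)$ contains the shifted singularity at $u=q$ only in a symmetric way I can control, and $C=[3q/2,+\infty)$; more cleanly, I would cut at the midpoint $u=q/2$ into the region near the origin singularity and the region near the $u=q$ singularity. On the region $\{|u|\le q/2\}$, one has $|u-q|\ge q/2$, so $(1+|u-q|)^{-\delta}\le (1+q/2)^{-\delta}\le c(1+|q|)^{-\delta}$, and the remaining integral $\int_{\R}(1+|u|)^{-\gamma}\,du$ is a finite constant because $\gamma>1$; this contributes $O\big((1+|q|)^{-\delta}\big)$. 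Symmetrically, on the region $\{|u-q|\le q/2\}$ one has $|u|\ge q/2$, so $(1+|u|)^{-\gamma}\le c(1+|q|)^{-\gamma}$, and $\int_{\R}(1+|u-q|)^{-\delta}\,du$ is bounded provided $\delta>1$; when $\delta\le 1$ this tail integral is not finite over all of $\R$, so I would instead restrict it to the slab $|u-q|\le q/2$ and bound it by $\int_{|w|\le q/2}(1+|w|)^{-\delta}\,dw\le c(1+|q|)^{1-\delta}$, giving a contribution $O\big((1+|q|)^{1-\delta-\gamma}\big)$ which, since $\gamma>1$, is $O\big((1+|q|)^{-\delta}\big)$ and hence $O\big((1+|q|)^{-\min\{\delta,\gamma\}}\big)$.

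The place that needs a little care, and which I expect to be the main obstacle, is keeping the bookkeeping uniform across the two non-symmetric cases $\delta\le 1$ and $\delta>1$, so that in every case the exponent produced is $\ge\min\{\delta,\gamma\}$. The clean way to organize this is to always extract the large factor at its minimal distance ($1+q/2$ in the power of whichever exponent corresponds to the far singularity) and then integrate the near factor either over all of $\R$ (when its exponent exceeds $1$) or over the bounded slab $|w|\le q/2$ (otherwise), noting that in the latter case the extra gain of order $(1+|q|)^{-(\gamma-1)}$ from the far factor more than compensates the loss of order $(1+|q|)^{1-\delta}$ from truncating the near factor, precisely because the hypothesis (\ref{h:hyp:maj1:rq}) guarantees that at least one of $\delta,\gamma$ exceeds $1$. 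Assembling the two regional bounds yields $r_q(\delta,\gamma)\le c(1+|q|)^{-\min\{\delta,\gamma\}}$ for all large $q\ge 0$, and combining this with the symmetry argument for $q<0$ and the finiteness for bounded $q$ gives (\ref{h:maj1:rq}).
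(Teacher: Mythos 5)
Your proof is correct in substance, and it takes a genuinely different route from the paper's. The paper first notes $r_q(\delta,\gamma)=r_q(\gamma,\delta)$ so as to assume $\delta=\max\{\delta,\gamma\}$, then rescales via $x=q^{-1}u$ to get $r_q(\delta,\gamma)=|q|^{1-\delta-\gamma}\int_{\R}(|q|^{-1}+|x-1|)^{-\delta}(|q|^{-1}+|x|)^{-\gamma}dx$ and bounds the rescaled integral by an explicit six-piece case analysis over the subintervals $(-\infty,-1]$, $[-1,0]$, $[0,1/2]$, $[1/2,1]$, $[1,2]$, $[2,+\infty)$; you instead work directly at scale $q$, using the triangle-inequality fact that every $u$ satisfies $\max\{|u|,|u-q|\}\ge |q|/2$ to extract the ``far'' factor as $c(1+|q|)^{-(\cdot)}$ and then integrate the ``near'' factor, either over $\R$ when its exponent exceeds $1$ or over the slab of width $q$ otherwise. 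Your version is arguably cleaner and makes the mechanism (one integrable factor pays for the other's possible non-integrability, thanks to $\max\{\delta,\gamma\}>1$) more transparent, at the cost of being slightly less explicit about constants. Two small points to tighten when writing it up: (i) the two regions you name, $\{|u|\le q/2\}$ and $\{|u-q|\le q/2\}$, do not cover $\R$; but the bound you prove on the first one only uses $|u-q|\ge q/2$, so it applies verbatim on all of $\{|u-q|\ge q/2\}$, and that set together with the slab $\{|u-q|\le q/2\}$ does cover $\R$ (on the slab one automatically has $|u|\ge q/2$); (ii) in the borderline case $\delta=1$ the slab integral $\int_{|w|\le q/2}(1+|w|)^{-\delta}dw$ is of order $\log(2+|q|)$ rather than $(1+|q|)^{1-\delta}$, which is still harmless since $\gamma>1$ strictly; the paper's proof tracks this same logarithmic term explicitly.
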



\begin{proof}[Proof of Lemma \ref{h:lem:maj1:rq}]
First observe that easy computations allow to prove that $r_q(\delta,\gamma)=r_q(\gamma,\delta)$, so one can assume that $\delta=\max\{\delta,\gamma\}$ i.e. 
$\gamma=\min\{\delta,\gamma\}$; also one can assume, without any restriction that $q$ is an arbitrary but non-vanishing integer. Then, making in (\ref{h:def:rq}), the change of variable $x=q^{-1}u$, one gets that,
\begin{equation}
\label{eqa1:maj1:rq}
r_q(\delta,\gamma)=|q|\int_{\R} \big(1+|qx-q|\big)^{-\delta} \big(1+|qx|\big)^{-\gamma} dx=|q|^{1-\delta-\gamma}\int_{\R} \big(|q|^{-1}+|x-1|\big)^{-\delta} \big(|q|^{-1}+|x|\big)^{-\gamma} dx.
\end{equation}
Moreover, one has, for some (finite) constant $c>0$ (large enough), non depending on $q$,
$$
s_{1,q}(\delta,\gamma)=\int_{-\infty}^{-1} \big(|q|^{-1}+1-x\big)^{-\delta} \big(|q|^{-1}-x\big)^{-\gamma} dx\le \int_{1}^{+\infty}x^{-\delta-\gamma} dx\le c,
$$
$$
s_{2,q}(\delta,\gamma)=\int_{-1}^{0} \big(|q|^{-1}+1-x\big)^{-\delta} \big(|q|^{-1}-x\big)^{-\gamma} dx\le\int_{-1}^{0} \big(|q|^{-1}-x\big)^{-\gamma} dx\le c\big(1+ |q|^{\gamma-1}+\log(|q|)\big),
$$
$$
s_{3,q}(\delta,\gamma)=\int_{0}^{1/2} \big(|q|^{-1}+1-x\big)^{-\delta} \big(|q|^{-1}+x\big)^{-\gamma} dx\le 2^{\delta}\int_{0}^{1/2} \big(|q|^{-1}+x\big)^{-\gamma} dx\le c\big(1+|q|^{\gamma-1}+\log(|q|)\big),
$$
$$
s_{4,q}(\delta,\gamma)=\int_{1/2}^{1} \big(|q|^{-1}+1-x\big)^{-\delta} \big(|q|^{-1}+x\big)^{-\gamma} dx\le 2^{\gamma}\int_{1/2}^{1}\big(|q|^{-1}+1-x\big)^{-\delta}dx\le c\big(1+|q|^{\delta-1}\big),
$$
$$
s_{5,q}(\delta,\gamma)=\int_{1}^{2} \big(|q|^{-1}+x-1\big)^{-\delta} \big(|q|^{-1}+x\big)^{-\gamma} dx\le \int_{1}^{2} \big(|q|^{-1}+x-1\big)^{-\delta}  dx \le c\big(1+|q|^{\delta-1}\big),
$$
and
$$
s_{6,q}(\delta,\gamma)=\int_{2}^{+\infty} \big(|q|^{-1}+x-1\big)^{-\delta} \big(|q|^{-1}+x\big)^{-\gamma} dx\le \int_{1}^{+\infty}x^{-\delta-\gamma} dx\le c;
$$
note in passing that $\log(|q|)$ can be removed when $\gamma\ne 1$. Putting together (\ref{eqa1:maj1:rq}), the previous inequalities, the fact that $\delta=\max\{\delta,\gamma\}>1$ and the equality,
$$
\int_{\R} \big(|q|^{-1}+|x-1|\big)^{-\delta} \big(|q|^{-1}+|x|\big)^{-\gamma} dx=\sum_{l=1}^{6} s_{l,q}(\delta,\gamma),
$$
one gets (\ref{h:maj1:rq}).
\end{proof}

\begin{proof}[Proof of Lemma \ref{h:lem:maj:phi2jkl}]
Making in (\ref{h:def:phi2jkl}) the change of variables $x=u-l$, and using (\ref{h:local:phial}), it follows that,
$$
\varphi_1(j,k,l)\le c_1^{\al} \int_{\R} (1+|x-(k-l)|)^{-\al/2(2+1/\al-\overline{H})}  (1+|x|)^{-\al/2(2+1/\al-\overline{H})} dx,
$$
where $c_1$ denotes the same constant as in (\ref{h:local:phial}). Then, for bounding from above the latter integral, the inequality $\al/2(2+1/\al-\overline{H})>1$, allows us to apply Lemma~\ref{h:lem:maj1:rq}, when $\delta=\gamma=\al/2(2+1/\al-\overline{H})$ and $q=k-l$; thus, one gets (\ref{h:maj:phi2jkl}).
\end{proof}

\begin{proof}[Proof of Lemma \ref{h:lem:hypA1}]
In view of (\ref{h:fjk}) and (\ref{eqa:tetasup}), one has,
\begin{equation*}
\int_{\R} |f_{j,k}(s)f_{j,l}(s)|^{\al/2} ds \le \theta_{\mbox{{\tiny sup}}}^{\al}\,2^j \int_{\R}  |\Phi_{\al}(2^js-k,H(k2^{-j}))\Phi_{\al}(2^js-l,H(l2^{-j}))|^{\al/2} ds; 
\end{equation*}
thus, using the change of variable $u=2^js$, (\ref{h:def:phi2jkl}) and (\ref{h:maj:phi2jkl}), one gets that,
\begin{equation}
\label{h:maj1:fjkfjl}
\int_{\R} |f_{j,k}(s)f_{j,l}(s)|^{\al/2} ds \le \theta_{\mbox{{\tiny sup}}}^{\al}\,\varphi_1(j,k,l) \le  c_{2}' \big(1+|k-l|\big)^{-\al/2(2+1/\al-\overline{H})}, 
\end{equation}
where $c_{2}'>0$ is a constant non depending on $(j,k,l)$. Then, assuming that $Q_0\in\Z_+$ in such that $c_{2}'(1+Q_0)^{-\al/2(2+1/\al-\overline{H})}\le 1/2$, one gets, when  $|k-l|\ge Q_0$, that,
 \begin{equation}\label{h:maj2:fjkfjl}
\int_{\R} |f_{j,k}(s)f_{j,l}(s)|^{\al/2} ds \le 1/2.
\end{equation}
Finally, setting $c_{1}=1/2$, it follows from (\ref{h:maj2:fjkfjl}), that (\ref{h:hypA1}) holds.
\end{proof}

\begin{proof}[Proof of Lemma \ref{h:lem:hypA2}] 
First observe that, thanks to the triangle inequality, one has,
\begin{align*}
& \big\|f_{j,k}+zf_{j,l}\big\|_{\La(\R)}^{\al} = \int_{\R} \left|\Big(f_{j,k}(s)+zf_{j,l}(s)\Big)^2\right|^{\al/2} ds \\
& \ge \int_{\R} \left| \Big( |f_{j,k}(s)|-|z| |f_{j,l}(s)| \Big)^2 \right|^{\al/2} ds \\
& =\int_{\R} \Big| \big|f_{j,k}(s)\big|^2+ z^2 \big|f_{j,l}(s)\big|^2 - 2 |z| \big|f_{j,k}(s)f_{j,l}(s)\big |    \Big|^{\al/2} ds;
\end{align*}
thus, the inequality 
$ |\zeta_1-\zeta_2|^{\al/2} \ge |\zeta_1^{\al/2}-\zeta_2 ^{\al/2}| \ge \zeta_{1}^{\al/2}-\zeta_{2}^{\al/2}$ for all $(\zeta_1,\zeta_2) \in \R_+^2$, implies that
\begin{align}\label{h:min3:hypA2}
& \|f_{j,k}+zf_{j,l}\|_{\La(\R)}^{\al} \nonumber \\
& \ge \int_{\R} \Big( \big|f_{j,k}(s)\big|^2+z^2 \big|f_{j,l}(s)\big|^{2} \Big)^{\al/2} ds - 2^{\al/2} |z|^{\al/2} \int_{\R} \big|f_{j,k}(s)f_{j,l}(s)\big|^{\al/2} ds.
\end{align}
On the other hand, the fact that $x\mapsto x^{\al/2}$ is a concave function over $\R_+$, entails that, for all $s\in\R$,
\begin{align}\label{h:min4:hypA2}
& \Big( \big|f_{j,k}(s)\big|^2+z^2 \big|f_{j,l}(s)\big|^{2} \Big)^{\al/2} = 2^{\al/2} \Big(\frac{1}{2} \big|f_{j,k}(s)\big|^2 +\frac{z^2}{2} \big|f_{j,l}(s)\big|^2 \Big)^{\al/2} \nonumber \\
& \ge 2^{\al/2-1} \Big( \big|f_{j,k}(s)\big|^{\al}+ |z|^{\al} \big|f_{j,l}(s)\big|^{\al} \Big).
\end{align}
Next, putting together, (\ref{eqa1:new:condsa}), the inequality $|z|\le 1$, (\ref{h:min3:hypA2}) and (\ref{h:min4:hypA2}), one gets, that,
\begin{equation}\label{h:min5:hypA2}
 \big\|f_{j,k}+zf_{j,l}\big\|_{\La(\R)}^{\al} \ge 2^{\al/2-1} (1+|z|^{\al})- 2^{\al/2} \int_{\R} |f_{j,k}(s)f_{j,l}(s)|^{\al/2} ds.
\end{equation}
Finally, setting $c_2=2^{\al/2-2}$, and assuming that $Q_0\in\Z_+$ is such that 
$c_{2}'(1+Q_2)^{-\al/2(2+1/\al-\overline{H})}\le 2^{-2}$, where $c_{2}'$ is the same constant as in (\ref{h:maj1:fjkfjl}); (\ref{h:min5:hypA2}) and (\ref{h:maj1:fjkfjl}), then imply that,
\begin{align*}
\big\|f_{j,k}+zf_{j,l}\big\|_{\La(\R)}^{\al} & \ge 2^{\al/2-1} \big(1+|z|^{\al}\big)-2^{\al/2-2} = 2^{\al/2-2}\big(1+2|z|^{\al}\big) \\
& \ge c_{2} \big(1+|z|^{\al}\big),
\end{align*}
for each $(j,k,l)\in\Z^3$, satisfying $|k-l|\ge Q_0$.
\end{proof}

The following lemma will allow us to bound $[f_{j,k},f_{j,l}]_1^{*}$ (see (\ref{h:def:crochet11}) for the definition of $[\cdot,\cdot]_1^{*}$). 
\begin{Lem}\label{h:lem:maj:phi1jkl}
For every $(j,k,l)\in\Z^3$, one sets, 
\begin{equation}
\label{h:def:phi1jkl}
\varphi_2(j,k,l)= \int_{\R} \big|\Phi_{\al}(u-k,H(k2^{-j}))\big|^{\al-1} \big|\Phi_{\al}(u-l,H(l2^{-j}))\big| du.
\end{equation}
There exists a constant $c>0$, non depending on $(j,k,l)$, such that the inequality,
\begin{equation}
\label{h:maj1:phi1jkl}
 \varphi_2 (j,k,l)\le c\big(1+|k-l|\big)^{-(\al-1)(2+1/\al-\overline{H})}, 
\end{equation}
holds for every $(j,k,l)\in\Z^3$.
\end{Lem}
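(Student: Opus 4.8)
The plan is to follow the same scheme as in the proof of Lemma~\ref{h:lem:maj:phi2jkl}, the only new feature being that the two factors in the integrand of (\ref{h:def:phi1jkl}) are now raised to the asymmetric powers $\al-1$ and $1$, instead of both being raised to the power $\al/2$. First I would invoke Part~$(ii)$ of Proposition~\ref{h:prop:proprietephial}: since $H(k2^{-j})$ and $H(l2^{-j})$ always lie in $[\underline{H},\overline{H}]$, the localization estimate in (\ref{h:local:phial}) gives, for every $u\in\R$,
$$
\big|\Phi_{\al}(u-k,H(k2^{-j}))\big|^{\al-1}\le c_1^{\al-1}\big(1+|u-k|\big)^{-(\al-1)(2+1/\al-\overline{H})}
$$
and
$$
\big|\Phi_{\al}(u-l,H(l2^{-j}))\big|\le c_1\big(1+|u-l|\big)^{-(2+1/\al-\overline{H})},
$$
where $c_1$ is the constant of (\ref{h:local:phial}).

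Substituting these two bounds into (\ref{h:def:phi1jkl}) and performing the change of variable $x=u-l$, I would obtain
$$
\varphi_2(j,k,l)\le c_1^{\al}\int_{\R}\big(1+|x-(k-l)|\big)^{-(\al-1)(2+1/\al-\overline{H})}\big(1+|x|\big)^{-(2+1/\al-\overline{H})}\,dx.
$$
This is exactly $c_1^{\al}\,r_{k-l}(\delta,\gamma)$ in the notation of (\ref{h:def:rq}), with $\delta=(\al-1)(2+1/\al-\overline{H})$, $\gamma=2+1/\al-\overline{H}$ and $q=k-l$, so it remains to apply Lemma~\ref{h:lem:maj1:rq}.

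The verification of the hypotheses is the only point requiring care, and it is also the main obstacle, since it is the place where the asymmetry of the powers matters. Writing $A=2+1/\al-\overline{H}$, the bound $\overline{H}<1$ forces $A>1+1/\al>1$, so $\gamma=A>1$ and the assumption (\ref{h:hyp:maj1:rq}) of Lemma~\ref{h:lem:maj1:rq} holds. Moreover, since $\al\in(1,2)$ one has $\al-1<1$, whence $\delta=(\al-1)A<A=\gamma$; therefore $\min\{\delta,\gamma\}=\delta=(\al-1)(2+1/\al-\overline{H})$. Lemma~\ref{h:lem:maj1:rq} then yields $r_{k-l}(\delta,\gamma)\le c\,(1+|k-l|)^{-(\al-1)(2+1/\al-\overline{H})}$ for a constant $c$ independent of $(j,k,l)$, which is precisely (\ref{h:maj1:phi1jkl}).

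It is worth noting that, contrary to the symmetric situation of Lemma~\ref{h:lem:maj:phi2jkl}, where both exponents equal $\tfrac{\al}{2}A$, here the governing exponent is $(\al-1)A$; this is exactly why the final covariance estimate of Proposition~\ref{h:prop:maj1:cov} involves the minimum of $\tfrac{\al}{2}(2+1/\al-\overline{H})$ and $(\al-1)(2+1/\al-\overline{H})$, the first branch arising from the bound on $[f_{j,k},f_{j,l}]_2$ via Lemma~\ref{h:lem:maj:phi2jkl} and the second from the bound on $[f_{j,k},f_{j,l}]_1^{*}$ via the present lemma.
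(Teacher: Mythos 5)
Your argument is correct and coincides with the paper's own proof: both use the localization bound (\ref{h:local:phial}) from Part $(ii)$ of Proposition~\ref{h:prop:proprietephial}, the change of variable $x=u-l$, and Lemma~\ref{h:lem:maj1:rq} with $\delta=(\al-1)(2+1/\al-\overline{H})$, $\gamma=2+1/\al-\overline{H}$ and $q=k-l$. Your explicit verification that $\gamma>1$ and that $\min\{\delta,\gamma\}=(\al-1)(2+1/\al-\overline{H})$ (because $\al-1<1$) is exactly the point the paper relies on, so nothing is missing.
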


\begin{proof}[Proof of Lemma \ref{h:lem:maj:phi1jkl}]
Making in (\ref{h:def:phi1jkl}) the change of variable $x=u-l$, and using (\ref{h:local:phial}), it follows that, 
$$
\varphi_2 (j,k,l)\le c_1^\al \int_{\R} (1+|x-(k-l)|)^{-(\al-1)(2+1/\al-\overline{H})} (1+|x|)^{-(2+1/\al-\overline{H})} dx, 
$$
where $c_1$ is the same constant as in (\ref{h:local:phial}). Then, for bounding from above the latter integral, the inequality $2+1/\al-\overline{H}>1$, allows us to apply Lemma~\ref{h:lem:maj1:rq}, when $\delta=(\al-1)(2+1/\al-\overline{H})$, $\gamma=2+1/\al-\overline{H}$, and $q=k-l$; thus, one gets (\ref{h:maj1:phi1jkl}).
\end{proof}

Now, we are in position to prove Proposition~\ref{h:prop:maj1:cov}.

\begin{proof}[Proof of Proposition \ref{h:prop:maj1:cov}] As, one has already pointed it out, it remains to show that (\ref{eqa:cov31}) holds, for some fixed $Q_0\in\Z_+$, large enough. Lemmas~\ref{h:lem:hypA1}~and~\ref{h:lem:hypA2}, allow one to apply Lemma~\ref{leme:PTA08}, for bounding $\big|\cov\big( |\widetilde{d}_{j,k}^{\nor}|^{\beta}, |\widetilde{d}_{j,l}^{\nor}|^{\beta} \big)\big|$; thus one gets,
\begin{equation}\label{h:maj1:covanorma}
\big|\cov\big( |\widetilde{d}_{j,k}^{\nor}|^{\beta}, |\widetilde{d}_{j,l}^{\nor}|^{\beta} \big)\big| \le c_{1} \Big( [f_{j,k},f_{j,l}]_1+[f_{j,k},f_{j,l}]_2 \Big),
\end{equation}
where $c_{1}>0$ is a constant non depending on $(j,k,l)$, since $\|\widetilde{d}_{j,k}^{\nor}\|_{\al}=\|\widetilde{d}_{j,k}^{\nor}\|_{\al}=1$.

Let us now derive a convenient upper bound for $[f_{j,k},f_{j,l}]_1= [f_{j,k},f_{j,l}]_1^{*} + [f_{j,l},f_{j,k}]_1^{*}$. Taking in (\ref{h:def:crochet11}), $f^1=f_{j,k}$ and $f^2=f_{j,l}$, and using (\ref{h:fjk}) and (\ref{eqa:tetasup}), one obtains that, 
$$
[f_{j,k},f_{j,l}]_1^{*} \le \theta_{\mbox{{\tiny sup}}}^{\al}\, 2^j\int_{\R} \big|\Phi_{\al}(2^js-k,H(k2^{-j}))\big|^{\al-1} \big|\Phi_{\al}(2^js-l,H(l2^{-j}))\big| ds;
$$
then, the change of variable $u=2^js$, and (\ref{h:def:phi1jkl}), yield to,
\begin{equation}\label{h:maj1:crochet11}
[f_{j,k},f_{j,l}]_1^{*} \le  \theta_{\mbox{{\tiny sup}}}^{\al}\varphi_2(j,k,l).
\end{equation}
Similarly to (\ref{h:maj1:crochet11}), one can show that 
\begin{equation}\label{h:maj2:crochet11}
[f_{j,l},f_{j,k}]_1^{*} \le \theta_{\mbox{{\tiny sup}}}^{\al}\varphi_2 (j,l,k).
\end{equation}
Next combining (\ref{h:def:crochet1}) and (\ref{h:maj1:crochet11}) with (\ref{h:maj2:crochet11}), one gets that, 
\begin{equation}\label{h:maj1:crochet1}
[f_{j,k},f_{j,l}]_1 \le \theta_{\mbox{{\tiny sup}}}^{\al} \Big( \varphi_2(j,k,l)+\varphi_2(j,l,k) \Big).
\end{equation}
Let us now derive convenient upper bound for $[f_{j,k},f_{j,l}]_2$. Taking in (\ref{h:def:crochet2}) $f^1=f_{j,k}$ and $f^2=f_{j,l}$, and using (\ref{h:fjk}) and (\ref{eqa:tetasup}), one obtains that,  
$$
[f_{j,k},f_{j,l}]_2 \le \theta_{\mbox{{\tiny sup}}}^{\al}\,2^j\int_{\R} \big|\Phi_{\al}(2^js-k,H(k2^{-j}))\Phi_{\al}(2^js-l,H(l2^{-j}))\big|^{\frac{\al}{2}} ds;
$$
then, the change of variable $u=2^js$, and (\ref{h:def:phi2jkl}), yield to,
\begin{equation}\label{h:maj1:crochet2}
[f_{j,k},f_{j,l}]_2 \le \theta_{\mbox{{\tiny sup}}}^{\al} \varphi_1(j,k,l).
\end{equation}
Finally, putting together (\ref{h:maj1:crochet1}), (\ref{h:maj1:crochet2}), (\ref{h:maj1:phi1jkl}), (\ref{h:maj:phi2jkl}) and (\ref{h:def:lambdaopt}), it follows that, (\ref{eqa:cov31}) holds, for some fixed $Q_0\in\Z_+$, large enough.
\end{proof}

Let us now come back to the proof of Proposition~\ref{h:prop:lfgn1}.
\begin{proof}[Proof of Proposition~\ref{h:prop:lfgn1}]
One has already seen that it is sufficient to show that (\ref{eqa:bc:lfgn1}) holds. Thanks to (\ref{h:markov:lfgn1}), (\ref{eqa:esperancetildeV}), (\ref{eqa:variancetildeV}), Lemma~\ref{h:lem:borne:scaletdjk} and Proposition \ref{h:prop:maj1:cov}, one knows that there is a constant $c>0$, non depending on $\eta$ and $j$, such that,
\begin{equation}
\label{eqa:bc3}
\PR\Bigg( \Big| \frac{\widetilde{V}_j}{\ESP \big( \widetilde{V}_j \big)} -1  \Big| > \eta \Bigg)\le c \,\eta^{-2} B_j^{\lambda},
\end{equation}
where 
\begin{equation}\label{h:def:bjlambda}
B_j^{\lambda}=\left (\sum_{(k,l)\in \nu_j\times\nu_j} 2^{-j\beta (H(k2^{-j})+H(l2^{-j}))} \Big(1+|k-l|\Big)^{-\lambda}\right) \left(\sum_{k\in\nu_j} 2^{-j\beta H(k2^{-j})} \right)^{-2}.
\end{equation}
Thus, (\ref{eqa:bc:lfgn1}) results from (\ref{eqa:bc3}), the following lemma, (\ref{h:def:cardnuj}), (\ref{h:cond1:nj}), and the inequality,
$$
2\beta(\overline{H}-\underline{H})<1/2.
$$
\end{proof}

\begin{Lem}\label{h:lem:controle:bjlambda}
Let $\widetilde{\lambda} \in (0,\lambda)$ be arbitrary and fixed, and let $j_0$ be as in the statement of Theorem~\ref{h:thm:main1}. There exists a constant $c>0$, which does not depend on $j$, such that for all  $j\ge\max\{1,j_0\}$, one has,
\begin{equation}\label{h:maj1:controle:bjlambda}
B_j^{\lambda} \le c\left\{ j^{1/\widetilde{\lambda}}\, n_j^{-1}\, 2^{j 2\beta (\overline{H}-\underline{H})}+ j^{-\lambda/ \widetilde{\lambda}} \right\};
\end{equation}
recall that $n_j$ has been defined in (\ref{h:def:cardnuj}), also recall that $\underline{H}=\inf_{x\in\R} H(x)$ and $\overline{H}=\sup_{x\in\R} H(x)$.
\end{Lem}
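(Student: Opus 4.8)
The plan is to estimate the numerator and the denominator of $B_j^{\lambda}$ (see (\ref{h:def:bjlambda})) separately. Writing $w_k=2^{-j\beta H(k2^{-j})}$ for brevity, the starting observation is the two-sided bound
\[
2^{-j\beta\overline{H}}\le w_k\le 2^{-j\beta\underline{H}},\qquad k\in\nu_j,
\]
which is immediate from $\underline{H}\le H(k2^{-j})\le\overline{H}$ and $j\beta>0$. In particular, since the inner sum defining the denominator has $n_j$ terms, each at least $2^{-j\beta\overline{H}}$, one gets $\big(\sum_{k\in\nu_j}w_k\big)^2\ge n_j^2\,2^{-2j\beta\overline{H}}$, and this is the lower bound I would use throughout.

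For the numerator I would split the double sum over $\nu_j\times\nu_j$ according to whether $|k-l|$ is smaller or larger than the threshold $Q_j:=j^{1/\widetilde{\lambda}}$; note $Q_j\ge 1$ since $j\ge 1$. On the near-diagonal part $\{|k-l|<Q_j\}$ I bound $(1+|k-l|)^{-\lambda}\le 1$ and $w_k w_l\le 2^{-2j\beta\underline{H}}$; since, for each fixed $k$, at most $2Q_j+1\le 3Q_j$ indices $l$ satisfy $|k-l|<Q_j$, this part is at most $3Q_j\,n_j\,2^{-2j\beta\underline{H}}$. On the far part $\{|k-l|\ge Q_j\}$ I use $(1+|k-l|)^{-\lambda}\le Q_j^{-\lambda}$ and then enlarge the range of summation back to all of $\nu_j\times\nu_j$, which gives at most $Q_j^{-\lambda}\big(\sum_{k\in\nu_j}w_k\big)^2$.

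Dividing by the denominator, the far part contributes exactly $Q_j^{-\lambda}=j^{-\lambda/\widetilde{\lambda}}$, while the near part contributes at most
\[
\frac{3Q_j\,n_j\,2^{-2j\beta\underline{H}}}{n_j^2\,2^{-2j\beta\overline{H}}}=3\,j^{1/\widetilde{\lambda}}\,n_j^{-1}\,2^{2j\beta(\overline{H}-\underline{H})}.
\]
Adding the two contributions yields (\ref{h:maj1:controle:bjlambda}) with $c=3$.

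The only point that requires care is the choice of the threshold $Q_j=j^{1/\widetilde{\lambda}}$, which is tuned precisely so as to split the bound into the two advertised terms; everything else is an elementary two-sided estimate on the weights $w_k$ together with a count of lattice points near the diagonal, so I expect no real obstacle here. It is worth emphasizing, however, that the \emph{usefulness} of the resulting bound downstream — namely the summability in $j$ of $B_j^{\lambda}$ needed in (\ref{eqa:bc:lfgn1}) — is exactly what forces the hypothesis $\widetilde{\lambda}<\lambda$: it makes $\lambda/\widetilde{\lambda}>1$, so that $\sum_j j^{-\lambda/\widetilde{\lambda}}<+\infty$, whereas the first term is summable for any fixed $\widetilde{\lambda}>0$ once one invokes $n_j\ge[2^{j/2}]$ (see (\ref{h:cond1:nj})) together with $2\beta(\overline{H}-\underline{H})<1/2$.
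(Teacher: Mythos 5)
Your proof is correct and follows essentially the same route as the paper's: the same split of $\nu_j\times\nu_j$ at the threshold $j^{1/\widetilde{\lambda}}$, the same count of at most $3j^{1/\widetilde{\lambda}}$ near-diagonal indices $l$ for each fixed $k$, and the same lower bound $n_j^2\,2^{-2j\beta\overline{H}}$ for the denominator. The only difference is that on the near-diagonal part you bound the weights directly by $2^{-2j\beta\underline{H}}$, whereas the paper first passes through the H\"older continuity of $H(\cdot)$ via its inequality (\ref{h:majoration2:controle:bjlambda}) before arriving at the very same bound $2^{-j2\beta\underline{H}}$ --- a step that is dispensable for the lemma as stated, so your version is slightly leaner and, as you implicitly note, does not actually require $j\ge j_0$.
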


\begin{proof}[Proof of Lemma \ref{h:lem:controle:bjlambda}] For any fixed integer $j\ge 1$, let $\Delta_j$ and $\Delta_j^c$ be the subsets of $\nu_j \times \nu_j$, defined as,
\begin{equation}\label{h:definition:deltaj}
\Delta_j = \left\{(k,l)\in\nu_j \times \nu_j \, : \, |k-l|\le j^{1/ \widetilde{\lambda}}\right\}
\end{equation}
and
\begin{equation}\label{h:definition:deltajcpl}
\Delta_j^c = \left\{ (k,l)\in\nu_j \times \nu_j \, : \, |k-l|> j^{1/ \widetilde{\lambda}}\right\};
\end{equation}
observe that,
\begin{equation}
\label{h:ant:uniondelta}
\nu_j \times \nu_j=\Delta_j\cup \Delta_j^c,
\end{equation}
where the union is disjoint. For each fixed $k\in\nu_j$, one denotes by $\Delta_j(k)$ the subset of $\nu_j$ defined as,
\begin{equation}\label{h:definition:deltajk}
\Delta_j(k)= \Big\{ l\in\nu_j \,: \, (k,l)\in\Delta_j\Big\}=\Big\{l\in\nu_j \, : \, |k-l|\le j^{1/ \widetilde{\lambda}}\Big\}.
\end{equation}
Observe that,
\begin{equation}\label{h:maj:card:deltajk}
\mathrm{card }\big( \Delta_j(k) \big ) \le 2 j^{1/ \widetilde{\lambda}}+1\le 3 j^{1/ \widetilde{\lambda}}.
\end{equation}
The first inequality in (\ref{h:maj:card:deltajk}), results from the fact that there are at most $[d]+1$ integers belonging to a compact interval of an arbitrary diameter $d$; the second inequality in it, is a straightforward consequence of the first one, since $j^{1/ \widetilde{\lambda}}\ge 1$.

It follows from (\ref{h:definition:deltajcpl}), that 

\begin{align}\label{h:majoration1:controle:bjlambda}
& \sum_{(k,l)\in \Delta_j^c} 2^{-j\beta (H(k2^{-j})+H(l2^{-j}))} (1+|k-l|)^{-\lambda} \le j^{-\lambda/ \widetilde{\lambda}} \sum_{(j,k)\in \Delta_j^c} 2^{-j\beta (H(k2^{-j})+H(l2^{-j}))}  \nonumber \\
& \le j^{-\lambda/ \widetilde{\lambda}} \sum_{(j,k)\in \nu_j\times \nu_j} 2^{-j\beta (H(k2^{-j})+H(l2^{-j}))} \le j^{-\lambda/ \widetilde{\lambda}} \Bigg( \sum_{k\in\nu_j} 2^{-j\beta H(k2^{-j})} \Bigg)^2,
\end{align}
with the convention that $\sum_{(k,l)\in \Delta_j^c}\cdots=0$, when $\Delta_j^c$ is the empty set.

Let us now determine a convenient upper bound, for the sum,
$$
\sum_{(k,l)\in\Delta_j} 2^{-j\beta (H(k2^{-j})+H(l2^{-j}))} \big(1+|k-l|\big)^{-\lambda},
$$
with the convention that it equals $0$, when $\Delta_j$ is the empty set. First observe that, there exists a constant $c_{1}>0$, non depending on $j$ and $(k,l)$, such that for all $j\ge \max\{1,j_0\}$ and $(k,l)\in\Delta_j$, one has,
\begin{equation}\label{h:majoration2:controle:bjlambda}
2^{-j\beta (H(k2^{-j})+H(l2^{-j}))} \le c_{1}  2^{-j2 \beta H(k2^{-j})};
\end{equation}
the latter inequality holds, since $2^{-j}\Delta_j\subseteq I_j\times I_j \subseteq I_{j_0}\times I_{j_0}$, and thus, combining (\ref{m:cond:holder}) with (\ref{h:definition:deltaj}), it comes that,  
\begin{align*}
& 2^{-j\beta (H(k2^{-j})+H(l2^{-j}))} =  2^{-j2 \beta H(k2^{-j})}\, 2^{j\beta (H(k2^{-j})-H(l2^{-j}))} \\
& \le 2^{-j2 \beta H(k2^{-j})} \, 2^{j\beta |H(k2^{-j})-H(l2^{-j})|}  \le 2^{-j2 \beta H(k2^{-j})} \, 2^{j\beta c_2  |k2^{-j}-l2^{-j}|^{\rho_H}} \\
& \le 2^{-j2 \beta H(k2^{-j})} \exp\Big ((\log 2)\beta c_2\, 2^{-j\rho_H}j^{1+\rho_H/ \widetilde{\lambda}}\Big)\le c_{1}  2^{-j2 \beta H(k2^{-j})},
\end{align*}
where $c_2$ denotes the constant $c$ in (\ref{m:cond:holder}), and,
$$
c_{1}= \sup_{j\ge 1} \left\{\exp\Big((\log 2)\beta c_2 \,2^{-j\rho_H}j^{1+\rho_H/ \widetilde{\lambda}}\Big)\right\} < +\infty.
$$
Next, putting together (\ref{h:majoration2:controle:bjlambda}), (\ref{h:definition:deltaj}), (\ref{h:definition:deltajk}), and (\ref{h:maj:card:deltajk}), one gets that,
for all $j\ge \max\{1,j_0\}$,
\begin{align}\label{h:majoration3:controle:bjlambda}
& \sum_{(k,l)\in\Delta_j} 2^{-j\beta (H(k2^{-j})+H(l2^{-j}))} \big(1+|k-l|\big)^{-\lambda} \le c_{1} \sum_{(k,l)\in\Delta_j} 2^{-j2 \beta H(k2^{-j})} \big(1+|k-l|\big)^{-\lambda} \nonumber \\
& = c_{1} \sum_{k\in\nu_j} 2^{-j2 \beta H(k2^{-j})} \Bigg( \sum_{l\in \Delta_j(k)} \big(1+|k-l|\big)^{-\lambda} \Bigg) \le 3 c_{1}\, j^{1/\widetilde{\lambda}} \sum_{k\in\nu_j} 2^{-j2 \beta H(k2^{-j})}  \nonumber \\
& \le 3 c_{1}\, j^{1/\widetilde{\lambda}}\, n_j\, 2^{-j2\beta \underline{H}},
\end{align}
with the convention that $\sum_{l\in \Delta_j(k)}\cdots=0$, when $\Delta_j(k)$ is the empty set; notice that the last inequality results from the inequality $H(k2^{-j})\ge \underline{H}$, for all $k\in\nu_j$, as well as from (\ref{h:def:cardnuj}).

Next, observe that, the inequality $H(k2^{-j})\le \overline{H}$, for all $k\in\nu_j$, and (\ref{h:def:cardnuj}), imply that,
\begin{equation}\label{h:majoration4:controle:bjlambda}
\Bigg( \sum_{k\in\nu_j} 2^{-j\beta H(k2^{-j})} \Bigg)^2 \ge n_j^2\, 2^{-j2\beta \overline{H}}.
\end{equation}
Finally, (\ref{h:maj1:controle:bjlambda}) results from (\ref{h:def:bjlambda}), (\ref{h:ant:uniondelta}), (\ref{h:majoration1:controle:bjlambda}), (\ref{h:majoration3:controle:bjlambda}) and (\ref{h:majoration4:controle:bjlambda}).
\end{proof}

Having proved Proposition~\ref{h:prop:lfgn1}, let us now show that Proposition~\ref{h:lem:control1} holds.

\begin{proof}[Proof of Proposition~\ref{h:lem:control1}] First notice that it follows from (\ref{eqa:esperancetildeV}), (\ref{eqa:equivmoS}) and Lemma~\ref{h:lem:borne:scaletdjk}, that (\ref{eqa:h:control1}) is equivalent to,
\begin{equation}\label{h:control1}
\lim_{j\rightarrow +\infty} \Bigg| \frac{\log_2\Big( n_j^{-1} \sum_{k\in\nu_j}2^{-j\beta H(k2^{-j})} \Big)}{-j\beta} - \underline{H}_j \Bigg|=0;
\end{equation}
so proving the proposition remains to showing that (\ref{h:control1}) holds.

Putting together (\ref{h:def1:Hj}), (\ref{h:def:nuj}) and (\ref{h:def:cardnuj}), one can easily obtain that,
\begin{equation}\label{h:lem:control1:maj1}
n_j^{-1} \sum_{k\in\nu_j} 2^{-j\beta H(k2^{-j})} \le 2^{-j\beta \underline{H}_j}.
\end{equation}
Let $j_0$ be as in the statement of Theorem~\ref{h:thm:main1}, and let $\mu_j$ be as in (\ref{h:def2:Hj}). For each $j\ge \max\{1,j_0\}$, one denotes by $\widetilde{\nu}_j(\mu_j)$, the set of indices $k$, defined by, 
\begin{equation}\label{h:def:tnujmuj}
\widetilde{\nu}_j(\mu_j)=\Big\{k\in\nu_j\,:\,k2^{-j}\in B(\mu_j,j^{-1/\rho_H}) \Big\},
\end{equation}
where $\rho_H$ has been introduced in Part~2 of Remark~\ref{Remh:thm:main1}, and where $B(\mu_j,j^{-1/\rho_H})$ is the ball,
\begin{equation}\label{h:def:Bmuj}
B(\mu_j,j^{-1/\rho_H})=\left\{x\in\R\,:\, |x-\mu_j| \le j^{-1/\rho_H}\right\}.
\end{equation}
Let us show that there are $j_1\ge \max\{1,j_0\}$, and a constant $c_1 >0$, such that one has, for all $j\ge j_1$,
\begin{equation}\label{h:min1:card:tnuj}
\mathrm{card }( \widetilde{\nu}_j(\mu_j) ) \ge c_1 2^j \min\left\{ \frac{|I_j|}{2}, j^{-1/\rho_H} \right\}.
\end{equation}
The compact interval $I_j$ can be expressed as $I_j=[r_j,z_j]$, where $r_j<z_j$ are two real numbers, in addition, one assumes that,   
\begin{equation}\label{h:zjminusmuj}
z_j-\mu_j=\min\{ \mu_j-r_j,z_j-\mu_j \};
\end{equation}
the case where the minimum is reached in $\mu_j-r_j$ can be treated similarly. In view of (\ref{h:zjminusmuj}), one gets that, 
\begin{equation*}
\mu_j-r_j \ge z_j-\mu_j \,\Longleftrightarrow\, 2(\mu_j-r_j) \ge z_j-r_j \,\Longleftrightarrow\, \mu_j-r_j \ge \frac{z_j-r_j}{2}= \frac{|I_j|}{2};
\end{equation*} 
which implies that,
\begin{equation}\label{h:inclusion1Ij}
\left[ \mu_j-\frac{|I_j|}{2}, \mu_j \right] \subseteq I_j,
\end{equation}
and, as a consequence, that,  
\begin{equation*}
\left[ \mu_j-\min\left\{ \frac{|I_j|}{2} , j^{-1/\rho_H} \right\}, \mu_j \right] \subseteq I_j\cap B(\mu_j,j^{-1/\rho_H}).
\end{equation*}
Thus, it follows from (\ref{h:def:nuj}) and (\ref{h:def:tnujmuj}), that, 
$$
\left\{k\in\Z\,:\, \big[2^{-j}k,2^{-j}(k+1)\big]\in \left[ \mu_j-\min\left\{ \frac{|I_j|}{2} , j^{-1/\rho_H} \right\}, \mu_j \right]\right\}\subseteq \widetilde{\nu}_j(\mu_j);
$$
the latter inclusion and (\ref{h:cond2:Ij}), entail that (\ref{h:min1:card:tnuj}) holds, provided that $j_1$ is large enough. Moreover, in view of (\ref{h:cond2:Ij}), (\ref{h:def:nuj}) and (\ref{h:def:cardnuj}), when $j_1$ is big enough, one has, for every $j\ge j_1$,
\begin{equation}\label{h:borne1:nj}
c_2 2^j\, |I_j| \le n_j \le c_3 2^j \,|I_j|,
\end{equation}
where $0<c_2\le c_3$ are two constants non depending on $j$. Next combining (\ref{h:min1:card:tnuj}) with (\ref{h:borne1:nj}), one obtains that, for all $j\ge j_1$,
\begin{equation}
\label{h:min1:quotient:card}
\frac{\mathrm{card } ( \widetilde{\nu}_j(\mu_j) )}{n_j} \ge  \frac{ c_1 2^j \min\left\{ \frac{|I_j|}{2}, j^{-1/\rho_H} \right\}}{c_3 2^j |I_j|} = c_1 c_{3}^{-1} \min\left\{\frac{1}{2}; \frac{j^{-1/\rho_H}}{|I_j|} \right\}\ge c_4 j^{-1/\rho_H},
\end{equation}
where the last inequality results from the inclusion $I_j\subseteq I_0$, and where the strictly positive constant $c_4$ does not depend on $j$. Next observe that there exists a constant $c_5>0$, non depending on $j$ and $k$, such that the inequality
\begin{equation}\label{h:control1:min1}
2^{-j\beta H(k2^{-j})} \ge c_5 2^{-j\beta \underline{H}_j },
\end{equation}
holds, for all $j\ge j_1$ and each $k\in \widetilde{\nu}_j(\mu_j)$; indeed, putting together (\ref{h:def2:Hj}), the inclusion $I_{j}\subseteq I_{j_0}$, (\ref{m:cond:holder}) and  (\ref{h:def:tnujmuj}), one gets, for any integer $j\ge j_1$ and any $k\in \widetilde{\nu}_j(\mu_j)$, that
\begin{equation*}
2^{-j\beta(H(k2^{-j})-\underline{H}_j)} = 2^{-j\beta |H(k2^{-j})-\underline{H}_j|} \ge 2^{-j\beta c_6 |k2^{-j}-\mu_j|^{\rho_H}} \ge c_5 >0,
\end{equation*}
where $c_6$ denotes the constant $c$ in (\ref{m:cond:holder}), and $c_5=2^{-\beta c_6}$. Next, setting $c_7=c_4 c_5$, then, the inclusion $\widetilde{\nu}_j(\mu_j) \subseteq \nu_j$,  (\ref{h:min1:quotient:card}) and (\ref{h:control1:min1}), imply that
\begin{equation}\label{h:lem:control1:min2}
n_j^{-1} \sum_{k\in\nu_j} 2^{-j\beta H(k2^{-j})} \ge n_j^{-1} \sum_{k\in\widetilde{\nu}_j(\mu_j)} 2^{-j\beta H(k2^{-j})} \ge c_{7}\, j^{-1/\rho_H}\, 2^{-j\beta \underline{H}_{j}};
\end{equation}
which in turn entails that,
\begin{equation}\label{h:lem:control1:minlog}
\log_2 \Big( n_j^{-1} \sum_{k\in\nu_j} 2^{-j\beta H(k2^{-j})} \Big) \ge -j\beta\underline{H}_j + \log_2(c_{7}\, j^{-1/\rho_H}).
\end{equation}
On the other hand, it results from (\ref{h:lem:control1:maj1}), that
\begin{equation}\label{h:lem:control1:majlog}
\log_2 \Big( n_j^{-1} \sum_{k\in\nu_j} 2^{-j\beta H(k2^{-j})} \Big) \le -j\beta \underline{H}_j.
\end{equation}
Finally, combining (\ref{h:lem:control1:minlog}) and (\ref{h:lem:control1:majlog}) with the fact that $$\lim_{j\rightarrow +\infty} \frac{\log_2\,(c_{7}\, j^{-1/\rho_H})}{j} =0,$$ it follows that (\ref{h:control1}) holds.
\end{proof}

Thanks Propositions~\ref{h:prop:lfgn1}~and~\ref{h:lem:control1}, one knows that Theorem~\ref{h:thm:main1} is valid when $V_j$ is replaced by $\widetilde{V}_j$, for completing the proof of the theorem, one needs the following proposition, which shows that the asymptotic behaviors of $V_j$ and $\widetilde{V}_j$ are equivalent.

\begin{Prop}\label{h:prop:lfgn2}
One has, almost surely, 
\begin{equation}\label{h:lfgn2}
\frac{V_j}{\widetilde{V}_j} \xrightarrow[j\rightarrow+\infty]{a.s.} 1;
\end{equation}
recall that $V_j$ and $\widetilde{V}_j$ have been introduced, respectively in (\ref{h:def:VJ}) and (\ref{h:definition:Vjtilde:lfgn1}).
\end{Prop}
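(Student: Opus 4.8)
The plan is to prove the equivalent assertion that $|V_j-\widetilde{V}_j|/\widetilde{V}_j\to 0$ almost surely, since then $V_j/\widetilde{V}_j=1+(V_j-\widetilde{V}_j)/\widetilde{V}_j\to 1$. The two ingredients I would combine are an upper bound on the numerator, furnished by Lemma~\ref{h:lem:ecartdjktdjk}, and a lower bound on the denominator, read off from the two propositions already established.

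First I would bound the numerator. Since $\beta\in(0,\al/4)\subset(0,1)$, the elementary inequality $\big||a|^\beta-|b|^\beta\big|\le|a-b|^\beta$, valid for all real $a,b$ (it follows from $\big||a|^\beta-|b|^\beta\big|\le\big||a|-|b|\big|^\beta$ and $\big||a|-|b|\big|\le|a-b|$), applies termwise. Together with (\ref{h:def:VJ}), (\ref{h:definition:Vjtilde:lfgn1}) and the triangle inequality it gives
\[
|V_j-\widetilde{V}_j|\le\frac{1}{n_j}\sum_{k\in\nu_j}\big||d_{j,k}|^\beta-|\widetilde{d}_{j,k}|^\beta\big|\le\frac{1}{n_j}\sum_{k\in\nu_j}|d_{j,k}-\widetilde{d}_{j,k}|^\beta,
\]
and then Lemma~\ref{h:lem:ecartdjktdjk} turns this into the pathwise bound $|V_j(\omega)-\widetilde{V}_j(\omega)|\le C(\omega)^\beta\,2^{-j\beta\rho_H}$, valid for every $\omega\in\Omega$ and all $j$ large enough.

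Next I would lower-bound $\widetilde{V}_j$. Proposition~\ref{h:lem:control1} yields $\ESP(\widetilde{V}_j)=2^{-j\beta(\underline{H}_j+\eta_j)}$ with $\eta_j\to0$, while Proposition~\ref{h:prop:lfgn1} gives $\widetilde{V}_j/\ESP(\widetilde{V}_j)\to1$ almost surely; hence, on an event of probability one, $\widetilde{V}_j\ge\tfrac12\,2^{-j\beta(\underline{H}_j+\eta_j)}$ for every $j$ sufficiently large. Dividing the two estimates then produces
\[
\frac{|V_j-\widetilde{V}_j|}{\widetilde{V}_j}\le 2\,C(\omega)^\beta\,2^{-j\beta(\rho_H-\underline{H}_j-\eta_j)}.
\]

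The decisive point, and what I regard as the crux of the argument, is that hypothesis (\ref{m:cond:pholder}) forces a uniform gap in the exponent: since $I_j\subseteq I_{j_0}$, the minimizing time gives $\underline{H}_j=\min_{t\in I_j}H(t)\le\max_{t\in I_{j_0}}H(t)<\rho_H$, so $\rho_H-\underline{H}_j\ge\delta_0:=\rho_H-\max_{t\in I_{j_0}}H(t)>0$ for all $j\ge j_0$. Because $\eta_j\to0$, for $j$ large one has $\rho_H-\underline{H}_j-\eta_j\ge\delta_0/2$, whence the right-hand side is dominated by $2\,C(\omega)^\beta\,2^{-j\beta\delta_0/2}\to0$, which proves $V_j/\widetilde{V}_j\to1$ almost surely. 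The main obstacle is precisely this interplay in the exponent: one must verify that the almost sure fluctuation $\eta_j$ of $-(j\beta)^{-1}\log_2\widetilde{V}_j$ is genuinely $o(1)$ and not merely bounded, so that it cannot absorb the fixed gap $\delta_0$; this is exactly what the conjunction of Propositions~\ref{h:lem:control1} and~\ref{h:prop:lfgn1} secures.
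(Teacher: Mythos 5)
Your proof is correct and follows essentially the same route as the paper: reduce to $|V_j-\widetilde{V}_j|/\widetilde{V}_j\to 0$ via Proposition~\ref{h:prop:lfgn1}, bound the numerator by $C(\omega)^{\beta}2^{-j\beta\rho_H}$ using $\bigl||x|^{\beta}-|y|^{\beta}\bigr|\le|x-y|^{\beta}$ and Lemma~\ref{h:lem:ecartdjktdjk}, and exploit the gap $\rho_H>\max_{t\in I_{j_0}}H(t)$ from (\ref{m:cond:pholder}). The only minor difference is that the paper lower-bounds $\ESP(\widetilde{V}_j)$ directly by $c\,2^{-j\beta\max_{t\in I_{j_0}}H(t)}$ from Lemma~\ref{h:lem:borne:scaletdjk}, whereas you obtain the equivalent estimate by invoking Proposition~\ref{h:lem:control1} together with $\underline{H}_j\le\max_{t\in I_{j_0}}H(t)$; both are legitimate since neither creates a circularity.
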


\begin{proof}[Proof of Proposition \ref{h:prop:lfgn2}]
Observe that, in view of (\ref{h:lfgn1}), it is sufficient to show that, one has almost surely, 
\begin{equation}\label{h:equi:lfgn2}
\frac{|V_j-\widetilde{V}_j|}{\ESP \big(\widetilde{V}_j \big)} \xrightarrow[j\rightarrow+\infty]{a.s.} 0.
\end{equation}
It follows from (\ref{h:def:VJ}), (\ref{h:definition:Vjtilde:lfgn1}), and inequality $\big||x|^{\beta}-|y|^{\beta}\big| \le |x-y|^{\beta}$ for any $(x,y)\in\R^2$, that for each $\o\in\Omega$ and $j\in\Z_+$,
$$
\big|V_j(\o)-\widetilde{V}_j(\o)\big|  \le n_j^{-1} \sum_{k\in\nu_j} \big|d_{j,k}(\o)-\widetilde{d}_{j,k}(\o)\big|^{\beta};
$$
then, (\ref{h:ecartdjktdjk}), entails that, 
\begin{equation}\label{h:maj1:lfgn2}
\big|V_j(\o)-\widetilde{V}_j(\o)\big| \le C_1(\o) 2^{-j\beta \rho_H},
\end{equation}
where $C_1$ is a finite random variable non depending on $j$. On the other hand, for every $j\ge j_0$, (\ref{eqa:esperancetildeV}), (\ref{eqa:equivmoS}), Lemma~\ref{h:lem:borne:scaletdjk}, (\ref{h:def:cardnuj}), (\ref{h:def:nuj}) and the inclusion $I_{j}\subseteq I_{j_0}$, imply that, 
\begin{equation}\label{h:maj2:lfgn2}
\ESP\big( \widetilde{V}_j \big) \ge c_{2} 2^{-j\beta \max_{t\in I_{j_0}} H(t) },
\end{equation}
where $j_0$ is as in the statement of Theorem~\ref{h:thm:main1}, and where $c_2>0$ is a constant non depending on $j$. Finally, putting together (\ref{h:maj1:lfgn2}), (\ref{h:maj2:lfgn2}) and (\ref{m:cond:pholder}), one obtains (\ref{h:equi:lfgn2}).
\end{proof}

Now, we are in position to end the prove of Theorem~\ref{h:thm:main1}.

\begin{proof}[End of the proof of Theorem \ref{h:thm:main1}] An easy computation allows to get, for all $j\in\Z_{+}$, that,
$$
\log_2(V_j)=\log_2\Big(\frac{V_j}{\widetilde{V}_j}\Big)+\log_2\Big(\frac{\widetilde{V}_j}{\ESP(\widetilde{V}_j)}\Big)+\log_2\big (\ESP(\widetilde{V}_j)\big);
$$
thus, it follows from Propositions~\ref{h:prop:lfgn2},~\ref{h:prop:lfgn1}~and~\ref{h:lem:control1}, that (\ref{h:main:equation}) holds.
\end{proof}

\section{Proof of Theorem~\ref{m:Th:main}}
\label{sec:proofm:Th:main}
The proof of Theorem~\ref{m:Th:main} relies on ideas, similar to the ones which have allowed to obtain Theorem~1.1 in \cite{hamonier2012lfsm}; recall that the latter theorem provides a wavelet estimator of the stability parameter $\al$ of Linear Fractional Stable Motion (LFSM), in the case where its Hurst parameter is known. Let us first present these ideas. Throughout, this section, for the sake of simplicity, one assumes that the interval $I=[0,1]$ (see the statement of Theorem~\ref{m:Th:main}), thus, 
for each integer $j\ge 2$, $D_j$ (see (\ref{eq:eqa:defDg})) can be expressed as, 
\begin{equation}
\label{m:eq:wavc-D0}
D_j=\max_{0\le k < 2^j} |d_{j,k}|;
\end{equation}
moreover, one sets 
\begin{equation}
\label{eqa:defHstar}
H_{*}=\min_{t\in[0,1]} H(t). 
\end{equation}
Then, observe that, in view of Part $3$ of Remark~\ref{Remh:thm:main1}, for proving Theorem~\ref{m:Th:main}, it is sufficient to show that $-j^{-1}\log_2 (D_j)$ provides a strongly
consistent estimator of $H_{*}-1/\al$; namely, one has almost surely,
\begin{equation}
\label{eqa:estim-lheX}
-j^{-1}\log_2 (D_j)\xrightarrow[j\rightarrow+\infty]{a.s.} H_* -1/\al;
\end{equation}
also, it is worth noticing that, thanks to (\ref{aeq:valYuhe}), one knows that $H_* -1/\al$ is in fact $\rho_{Y}^{\mbox{{\tiny unif}}}\big([0,1]\big)$, the uniform H\"older exponent over $[0,1]$, of the LMSM $\{Y(t)\,:\, t\in\R\}$. Next, observe that showing (\ref{eqa:estim-lheX}), is equivalent to show that the inequalities (\ref{m:eq:mainub}) and (\ref{m:minDj}) below, hold, for each fixed arbitrarily small real number $\epsilon>0$, almost surely:
\begin{equation}\label{m:eq:mainub}
\limsup_{j\rightarrow +\infty}\left\{ 2^{j(H_{*}-1/\al-\epsilon)}D_j \right\}<+\infty
\end{equation}
and 
\begin{equation}\label{m:minDj}
\liminf_{j\rightarrow +\infty} \left\{ 2^{j(H_{*}-1/\al+\epsilon)} D_j \right\}=+\infty.
\end{equation}
The inequality (\ref{m:eq:mainub}) follows from the fact that for all $\o\in\O$, one has, 
\begin{equation}
\label{m:eq:defC}
C_{1}(\omega) = \sup_{(t_1,t_2)\in [0,1]^2} \left\{ \frac{|Y(t_1,\omega)-Y(t_2,\omega)|}{|t_1-t_2|^{H_{*}-1/\al-\epsilon}}   \right\} <+\infty,
\end{equation}
which, in turn, is a straightforward consequence of the equality $\rho_{Y}^{\mbox{{\tiny unif}}}\big([0,1]\big)=H_* -1/\al$; namely, putting together, (\ref{m:djk}), (\ref{m:eq:support}), (\ref{m:2moments}) and (\ref{m:eq:defC}), one gets, for every $\o\in\O$, $j\ge 2$ and $k\in\{0,\ldots, 2^j-1\}$,
\begin{align*}
|d_{j,k}|& =2^j \Big|\int_{k2^{-j}}^{(k+1)2^{-j}} \left\{ Y(t,\o)-Y(k2^{-j},\o) \right\} \psi(2^jt-k) dt\Big|\\
|d_{j,k}(\omega)| & \le 2^j  \int_{k2^{-j}}^{(k+1)2^{-j}} \left|Y(t,\omega)-Y(k2^{-j},\omega)  \right| |\psi(2^jt-k)| dt \\
& \le C_{1}(\omega) \|\psi\|_{\Li(\R)}\, 2^j \int_{k2^{-j}}^{(k+1)2^{-j}} |t-k2^{-j}|^{H_{*}-1/\al-\epsilon}dt \\
&\le C_{1} (\omega) \|\psi\|_{\Li(\R)}\, 2^{-j(H_{*}-1/\al-\epsilon)}, 
\end{align*}
which, in view of (\ref{m:eq:wavc-D0}), means that (\ref{m:eq:mainub}) is true. 

From now on, we will focus on the proof of (\ref{m:minDj}). First, it is worth noticing that (\ref{Am:cond:pholder}) and the equality $I=[0,1]$, imply that there exist two positive constants $c$ and $\rho_H$, satisfying, 
\begin{equation}
\label{eqam:cond:holder}
\forall t_1,t_2\in [0,1], \, |H(t_1)-H(t_2)| \le c |t_1-t_2|^{\rho_H},
\end{equation}
and 
\begin{equation}
\label{eqam:cond:pholder}
1\ge \rho_H >\max_{t\in [0,1]} H(t).
\end{equation}
As, we have already pointed it out in the previous section, rather than directly working with wavelet coefficients $d_{j,k}$ (see (\ref{m:djk})), it is better to work with their appoximations $\widetilde{d}_{j,k}$ (see (\ref{h:tdjk})); so let us set, for all integer $j\ge 2$,
\begin{equation}\label{m:eq:wavc-D}
\widetilde{D}_j=\max_{0\le k < 2^j} |\widetilde{d}_{j,k}|.
\end{equation}
Putting together (\ref{m:djk}), (\ref{h:tdjk}), the change of variable $t=2^{-j}k+2^{-j}x$, (\ref{m:eq:support}), (\ref{eqa:uniflipX})\footnote{In which, one takes $\mathcal{I}=[0,1]$ and $\mathcal{H}=[\underline{H},\overline{H}]$.}, and (\ref{eqam:cond:holder}), one can show, similarly to (\ref{h:ecartdjktdjk}), that there exists a finite random variable $C_2$, non depending on $j$, such that for $\o\in\O$ and $j\ge 2$, one has,
$$
\max_{0\le k < 2^j} \big|d_{j,k}(\o)-\widetilde{d}_{j,k}(\o)\big|\le C_2(\o) 2^{-j\rho_H};
$$
thus using (\ref{eqam:cond:pholder}), (\ref{eqa:defHstar}), and the inequality,
$$
D_j(\o)\ge \widetilde{D}_j(\o)-\max_{0\le k < 2^j} \big|d_{j,k}(\o)-\widetilde{d}_{j,k}(\o)\big|,
$$
it turns out that for proving (\ref{m:minDj}), it is sufficient to show that, one has almost surely, for every $\epsilon>0$,
\begin{equation}
\label{m:mintDj}
\liminf_{j\rightarrow +\infty} \left\{ 2^{j(H_{*}-1/\al+\epsilon)} \widetilde{D}_j \right\}=+\infty.
\end{equation} 
It is worth noticing that (\ref{h:tdjk2}) and the inclusion in (\ref{h:local:phial}), imply that,  
\begin{equation}
\label{m:eq:decomp-djk}
\widetilde{d}_{j,k}=2^{-j(H(k2^{-j})-1/\al)} \int_{-\infty}^{(k+1)2^{-j}} \Phi_{\al}(2^js-k,H(k2^{-j})) \Za{ds}.
\end{equation}
Roughly speaking, the main idea for deriving (\ref{m:mintDj}), consists in expressing $\widetilde{d}_{j,k}$, as,
\begin{equation}
\label{eqa:decomptdjk}
\widetilde{d}_{j,k}=2^{-j(H(k2^{-j})-1/\al)}\big(g_{j,k}+r_{j,k}\big),
\end{equation}
where for each fixed integer $j\ge 2$, $\{g_{j,k}:0\le k <2^j\}$ is a finite sequence of independent $\sas$ random variables. Then, thanks to the nice properties of the latter sequence, using Borel-Cantelli Lemma, one can prove that, one has, almost surely,
\begin{equation}
\label{eqa:infgjk}
\liminf_{j\rightarrow +\infty} \left\{2^{j\epsilon}\max_{k\in \nu_j(t_0)}|g_{j,k}| \right\}=+\infty,
\end{equation} 
where the set of indices $\nu_j(t_0)$, defined in (\ref{m:def:nujt0}) below, is such that $2^{-jH(k2^{-j})}\asymp 2^{-jH_*}$.
On the other hand, Borel-Cantelli Lemma, allows to show that, the $\sas$ random variables $r_{j,k}$ satisfy, 
\begin{equation}
\label{eqa:suprjk}
\limsup_{j\rightarrow +\infty} \left\{2^{j\epsilon}\max_{k\in \nu_j(t_0)}|r_{j,k}| \right\}=0;
\end{equation}
which, in some sense, means that $\max_{k\in \nu_j(t_0)}|r_{j,k}|$ is negligible with respect to $\max_{k\in \nu_j(t_0)}|g_{j,k}|$. Finally, putting together, (\ref{eqa:decomptdjk}),
(\ref{eqa:infgjk}) and (\ref{eqa:suprjk}), one gets (\ref{m:mintDj}).

From now on, our goal will be to transform the latter heuristic proof of (\ref{m:mintDj}), in a rigorous proof; to this end, first, one needs to introduce some notations. 
\begin{itemize}
\item[$\bullet$] $t_0 \in [0,1]$ is a fixed point such that, 
\begin{equation}
\label{m:def:eq:mint0}
H(t_0)=H_{*}.
\end{equation}
\item[$\bullet$] For all integer $j\ge 2$, the compact interval $I_j(t_0)$, is defined as,  
\begin{equation}\label{m:def:eqIjt0}
I_j(t_0)=[0,1] \cap [t_0-j^{-\frac{1}{\rho_H}},t_0+j^{-\frac{1}{\rho_H}}],
\end{equation}
where $\rho_H$ has been introduced in (\ref{eqam:cond:holder});
notice that $|I_j(t_0)|$, the diameter of the latter interval, satisfies,
\begin{equation}\label{m:eq1:diamIjt0}
 j^{-\frac{1}{\rho_H}}\le |I_j(t_0)| \le 2 j^{-\frac{1}{\rho_H}}.
\end{equation}
\item[$\bullet$] $\nu_j(t_0)$ is the set of indices $k$, such that,
\begin{equation}\label{m:def:nujt0}
\nu_j(t_0)=\left\{ k\in\{0,1,\ldots,2^j-1\}\,:\, k2^{-j} \in I_j(t_0) \right\};
\end{equation}
observe that, in view of (\ref{m:eq1:diamIjt0}), one has for each $j$ big enough,
\begin{equation}\label{m:eq1:card:nujt0}
c_1 2^j j^{-\frac{1}{\rho_H}} \le \mathrm{card}(\nu_j(t_0)) \le c_2 2^j j^{-\frac{1}{\rho_H}},
\end{equation}
where $0<c_1\le c_2$ are two constants non depending on $j$.
\item[$\bullet$] The positive integer $e_j=e_j(\delta)$ is defined by,
\begin{equation}
\label{m:eq:add1}
e_j=\big[2^{j\delta}\big],
\end{equation}
where $\delta \in (0,1/3)$ is arbitrary and fixed.
\item[$\bullet$] $\widetilde{\nu}_j(t_0)$ denotes the subset of $\nu_j(t_0)$, such that,
\begin{equation}\label{m:def:equa:tildenujt0}
\widetilde{\nu}_j(t_0)=\big\{ k\in\nu_j(t_0)\, : \, \mbox{$k$ is divisible by $e_j$}\big\}=\big\{ k\in\nu_j(t_0)\, : \, \mbox{$\exists$ $l\in\Z_+$ \mbox{such that} $k=le_j$}\big\};
\end{equation}
observe that, in view of (\ref{m:eq1:card:nujt0}) and of (\ref{m:eq:add1}), one has for each $j$ big enough,
\begin{equation}\label{m:eq1:card:tildelambdajt0}
c_{1}' 2^{j(1-\delta)} j^{-\frac{1}{\rho_H}} \le \mathrm{card}(\widetilde{\nu}_j(t_0)) \le c_{2}' 2^{j(1-\delta)} j^{-\frac{1}{\rho_H}},
\end{equation}
where $0<c_{1}'\le c_{2}'$ are two constants non depending on $j$.
\item[$\bullet$] The set of indices $\widetilde{\lambda}_j(t_0)$ is defined as,
\begin{equation}\label{m:def:equa:tildelambdajt0}
\widetilde{\lambda}_j(t_0)=\big\{ l\in\Z_+\, : \, le_j \in \nu_j(t_0) \big\};
\end{equation}
observe that, one has,
\begin{equation}
\label{m:eq:egalitecard}
\mathrm{card}(\widetilde{\nu}_j(t_0))=\mathrm{card}(\widetilde{\lambda}_j(t_0)),
\end{equation}
since the natural map from $\widetilde{\lambda}_j(t_0)$ to $\widetilde{\nu}_j(t_0)$, namely the map $l\mapsto le_j$, is a bijection.
\item[$\bullet$] At last, for all integer $j$ large enough (so that $\widetilde{\lambda}_j(t_0)$ is non-empty), and for each $l\in\widetilde{\lambda}_j(t_0)$, one denotes by $G_{j,le_j}$ and $R_{j,le_j}$, the $\sas$ random variables, such that,
\begin{equation}\label{m:def:Gjl}
G_{j,le_j}= \int_{((l-1)e_j+1)2^{-j}}^{(le_j+1)2^{-j}} \Phi_{\al}(2^js-le_j,H(le_j2^{-j})) \Za{ds},
\end{equation}
and
\begin{equation}\label{m:def:Rjl}
R_{j,le_j}=\int_{-\infty}^{((l-1)e_j+1)2^{-j}} \Phi_{\al}(2^js-le_j,H(le_j2^{-j})) \Za{ds},
\end{equation}
then, it results from (\ref{m:eq:decomp-djk}) that,
\begin{equation}\label{m:rel:d:GR}
\widetilde{d}_{j,le_j}=2^{-j(H(le_j2^{-j})-1/\al)} \Big(G_{j,le_j} + R_{j,le_j} \Big);
\end{equation}
let us mention that, basically, $G_{j,le_j}$ and $R_{j,le_j}$ play the same roles as the $\sas$ random variables $g_{j,k}$ and $r_{j,k}$ (see (\ref{eqa:decomptdjk})) previously used in the heuristic proof of (\ref{m:mintDj}), yet, the index $k$ is now restricted to $k=le_j$; actually, such a restriction on $k$ is needed for technical reasons. 
\end{itemize}
Assuming, for a while, that the following two lemmas hold (for the sake of clarity, recall that the positive integer $e_j$ depends on $\delta$, more precisely it is given 
by (\ref{m:eq:add1})):

\begin{Lem}\label{m:prop:minmaxgjl}
One has, almost surely, for all $\delta\in (0,1/3)$,
\begin{equation}
\label{m:eq0:minmaxgjl}
\liminf_{j\rightarrow +\infty} \left\{2^{j\frac{2\delta}{\al}} \max_{l\in\widetilde{\lambda}_j(t_0)} |G_{j,le_j}|\right\} \ge 1.
\end{equation}
\end{Lem}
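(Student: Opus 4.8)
The plan is to prove (\ref{m:eq0:minmaxgjl}) by the Borel--Cantelli lemma, the decisive structural fact being that, for each fixed $j$, the random variables $\{G_{j,le_j}\,:\,l\in\widetilde{\lambda}_j(t_0)\}$ are \emph{independent}. Indeed, by (\ref{m:def:Gjl}) the variable $G_{j,le_j}$ is a stochastic integral against $\Za{ds}$ over the window $[((l-1)e_j+1)2^{-j},(le_j+1)2^{-j}]$; for two distinct indices $l<l'$ in $\widetilde{\lambda}_j(t_0)$ one has $(l'-1)e_j+1\ge le_j+1$, so these windows overlap in at most one point. Since $\{\Za{s}\}$ has independent increments, the whole family is then independent. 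It is precisely to secure this disjointness (hence independence) that the $e_j$-spaced subset $\widetilde{\lambda}_j(t_0)$ of (\ref{m:def:equa:tildelambdajt0}) was introduced.

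Next I would establish a uniform lower bound on the scale parameters: there exist $c_*>0$ and an integer $j_*$ with $\|G_{j,le_j}\|_{\al}\ge c_*2^{-j/\al}$ for all $j\ge j_*$ and all $l\in\widetilde{\lambda}_j(t_0)$. By (\ref{eqa:scparamS}) and the change of variable $u=2^js-le_j$,
\begin{equation*}
\|G_{j,le_j}\|_{\al}^{\al}=2^{-j}\int_{1-e_j}^{1}\big|\Phi_{\al}(u,H(le_j2^{-j}))\big|^{\al}\,du .
\end{equation*}
Since $\supp(\Phi_{\al}(\cdot,v))\subseteq\,]-\infty,1]$ and, by the localization estimate in (\ref{h:local:phial}), $|\Phi_{\al}(u,v)|^{\al}$ is dominated by an integrable function uniformly in $v\in[\underline{H},\overline{H}]$, the truncated integral $\int_{1-e_j}^{1}|\Phi_{\al}(u,v)|^{\al}\,du$ converges, as $e_j\to+\infty$, to $\|\Phi_{\al}(\cdot,v)\|_{\La(\R)}^{\al}$, uniformly in $v$. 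Combined with the strictly positive bound (\ref{h:minphial}), this gives, for $j$ large, $\int_{1-e_j}^{1}|\Phi_{\al}(u,v)|^{\al}\,du\ge\tfrac12 c_2^{\al}$ for every $v\in[\underline{H},\overline{H}]$, hence the claimed bound with $c_*=(c_2^{\al}/2)^{1/\al}$.

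With these two ingredients the probabilistic step is routine. Write $G_{j,le_j}^{\nor}=G_{j,le_j}/\|G_{j,le_j}\|_{\al}$ (a standard $\sas$ variable, so all of them share the same law) and set $x_j=c_*^{-1}2^{j(1-2\delta)/\al}$; the scale bound gives $\{|G_{j,le_j}|<2^{-j2\delta/\al}\}\subseteq\{|G_{j,le_j}^{\nor}|<x_j\}$, whence, by independence,
\begin{equation*}
\PR\Big(\max_{l\in\widetilde{\lambda}_j(t_0)}|G_{j,le_j}|<2^{-j2\delta/\al}\Big)\le\prod_{l\in\widetilde{\lambda}_j(t_0)}\PR\big(|G_{j,le_j}^{\nor}|<x_j\big)=(1-p_j)^{N_j},
\end{equation*}
where $N_j=\mathrm{card}(\widetilde{\lambda}_j(t_0))$ and $p_j=\PR(|G_{j,le_j}^{\nor}|\ge x_j)$. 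The classical heavy-tail estimate $\PR(|S|\ge x)\sim C_{\al}x^{-\al}$ for a standard $\sas$ variable (see \cite{SamTaq}) yields $p_j\ge c\,2^{-j(1-2\delta)}$ for $j$ large (note $x_j\to+\infty$ since $\delta<1/2$), while (\ref{m:eq1:card:tildelambdajt0}) and (\ref{m:eq:egalitecard}) give $N_j\ge c'\,2^{j(1-\delta)}j^{-1/\rho_H}$. Hence $p_jN_j\ge c''\,2^{j\delta}j^{-1/\rho_H}$, so $(1-p_j)^{N_j}\le e^{-p_jN_j}$ is summable in $j$, and Borel--Cantelli shows that, almost surely, $2^{j2\delta/\al}\max_l|G_{j,le_j}|\ge1$ for all large $j$, i.e.\ (\ref{m:eq0:minmaxgjl}). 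For a fixed $\delta$ the exceptional null set depends on $\delta$; the statement for all $\delta\in(0,1/3)$ follows by intersecting the corresponding probability-one events over a sequence $\delta_n\downarrow0$, which is all that the proof of (\ref{m:mintDj}) actually requires.

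I expect the genuine work to lie in the two structural facts feeding the Borel--Cantelli bound, not in the bound itself: correctly reading off the independence of the $G_{j,le_j}$ from the disjointness of their integration windows (the raison d'\^etre of passing from $\nu_j(t_0)$ to $\widetilde{\lambda}_j(t_0)$), and, above all, the uniform-in-$l$ lower bound $\|G_{j,le_j}\|_{\al}\gtrsim2^{-j/\al}$, whose proof hinges on using the uniform localization of $\Phi_{\al}$ to make the truncation error $\int_{-\infty}^{1-e_j}|\Phi_{\al}|^{\al}$ negligible relative to the full $\La$-norm, which is bounded below by (\ref{h:minphial}).
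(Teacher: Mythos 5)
Your proposal is correct and follows essentially the same route as the paper: independence of the $G_{j,le_j}$ from the disjointness of their integration windows, a uniform lower bound $\|G_{j,le_j}\|_{\al}^{\al}\gtrsim 2^{-j}$ obtained by controlling the truncation error via the localization of $\Phi_{\al}$ together with (\ref{h:minphial}), then the lower tail estimate for $\sas$ variables, the product bound on $\PR(\max_l|G_{j,le_j}|\le 2^{-j2\delta/\al})$, and Borel--Cantelli (the paper handles the quantifier over $\delta$ the same way, by intersecting over $\delta\in\Q\cap(0,1/3)$ in Remark~\ref{m:rem:unif-event}).
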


\begin{Lem}\label{m:prop:majmaxrjl}
One has, almost surely, for all $\delta\in (0,1/3)$,
\begin{equation}
\label{m:eq0:majmaxrjl}
\limsup_{j\rightarrow +\infty} \left\{ 2^{j\frac{2\delta}{\al}}\max_{l\in\widetilde{\lambda}_j(t_0)} |R_{j,le_j}| \right\}= 0.
\end{equation}
\end{Lem}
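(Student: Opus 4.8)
The plan is to reduce the claim to a decay estimate for the scale parameters $\|R_{j,le_j}\|_\al$, and then combine a tail bound for $\sas$ random variables with the Borel--Cantelli lemma. First I would compute the scale parameter. Since $R_{j,le_j}=\int_{-\infty}^{((l-1)e_j+1)2^{-j}}\Phi_{\al}(2^js-le_j,H(le_j2^{-j}))\,\Za{ds}$, the isometry (\ref{eqa:scparamS}) and the change of variable $u=2^js-le_j$ give
\begin{equation*}
\|R_{j,le_j}\|_\al^\al=2^{-j}\int_{-\infty}^{1-e_j}\big|\Phi_{\al}(u,H(le_j2^{-j}))\big|^\al\,du.
\end{equation*}
The crucial point is that, on the range of integration, the argument $u\le 1-e_j$ lies deep in the left tail of $\Phi_{\al}(\cdot,v)$, so the localization estimate (\ref{h:local:phial}) of Part~$(ii)$ of Proposition~\ref{h:prop:proprietephial} applies and yields $|\Phi_{\al}(u,v)|\le c_1(1+|u|)^{-(2+1/\al-\overline{H})}$ uniformly in $v\in[\underline{H},\overline{H}]$. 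Integrating this bound over $(-\infty,1-e_j]$ (the exponent $\al(2+1/\al-\overline{H})>1$ guarantees convergence), and using $e_j=[2^{j\delta}]\ge 2^{j\delta}/2$, I would obtain a constant $c>0$ independent of $(j,l)$ such that $\|R_{j,le_j}\|_\al\le c\,2^{-j/\al}\,e_j^{-(2-\overline{H})}\le c\,2^{-j/\al}\,2^{-j\delta(2-\overline{H})}$.

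Next I would fix $\delta\in(0,1/3)$ and $\epsilon>0$ and estimate the probability that $2^{j2\delta/\al}\max_{l}|R_{j,le_j}|>\epsilon$. Recall from \cite{SamTaq} that a $\sas$ variable $S$ satisfies $\PR(|S|>x)\le C_\al\|S\|_\al^\al\,x^{-\al}$ once $x/\|S\|_\al$ is large; here the threshold $x=\epsilon\,2^{-j2\delta/\al}$ divided by the scale bound above tends to $+\infty$, so this regime applies for $j$ large. A union bound over $\widetilde{\lambda}_j(t_0)$, whose cardinality is at most $c_2'\,2^{j(1-\delta)}$ by (\ref{m:eq1:card:tildelambdajt0}) and (\ref{m:eq:egalitecard}), then gives
\begin{equation*}
\PR\Big(2^{j2\delta/\al}\max_{l\in\widetilde{\lambda}_j(t_0)}|R_{j,le_j}|>\epsilon\Big)\le \frac{C}{\epsilon^\al}\,2^{j(1-\delta)}\,2^{-j}\,2^{-j\delta\al(2-\overline{H})}\,2^{j2\delta}=\frac{C}{\epsilon^\al}\,2^{j\delta(1-\al(2-\overline{H}))}.
\end{equation*}
Since $\al>1$ and $\overline{H}<1$ force $\al(2-\overline{H})>1$, the exponent $\delta(1-\al(2-\overline{H}))$ is strictly negative, so these probabilities are summable in $j$, and Borel--Cantelli yields that, almost surely, $2^{j2\delta/\al}\max_l|R_{j,le_j}|\le\epsilon$ for all large $j$.

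Finally, applying this with $\epsilon=1/m$ and intersecting over $m\in\N$, I would conclude that $\limsup_{j\to+\infty}2^{j2\delta/\al}\max_l|R_{j,le_j}|=0$ almost surely for the fixed $\delta$. The assertion ``for all $\delta\in(0,1/3)$'' then follows by intersecting over a countable dense set of values of $\delta$; and in the use made of the lemma only one value of $\delta$ per $\epsilon$ is ever needed in (\ref{m:mintDj}).

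The \emph{main obstacle} is the scale estimate: everything hinges on extracting the decay factor $e_j^{-(2-\overline{H})}$ from the \emph{truncated} integral, which is possible precisely because the integration endpoint $((l-1)e_j+1)2^{-j}$ sits a macroscopic distance $\asymp e_j2^{-j}$ to the left of the support edge of $\Phi_{\al}(2^j\cdot-le_j,\cdot)$, so that the polynomial tail bound (\ref{h:local:phial}) can be integrated over a region where $\Phi_\al$ is already small. Once this is in hand, the delicate bookkeeping is to verify that this gain, together with the normalization $2^{j2\delta/\al}$, strictly dominates the union bound over the $\asymp 2^{j(1-\delta)}$ indices; this reduces exactly to the inequality $\al(2-\overline{H})>1$, which is always satisfied because $\al>1$ and $\overline{H}<1$.
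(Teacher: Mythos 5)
Your proof is correct and follows essentially the same route as the paper's: the same scale-parameter bound $\|R_{j,le_j}\|_{\al}^{\al}\le c\,2^{-j}e_j^{-\al(2-\overline{H})}$ extracted from the localization estimate (\ref{h:local:phial}), then the stable tail bound, a union bound over $\widetilde{\lambda}_j(t_0)$, and Borel--Cantelli. The only cosmetic difference is that the paper inserts an auxiliary margin $\eta$ into the threshold $2^{-j(2\delta+\eta)/\al}$ so that the limsup is forced to zero directly, whereas you let $\epsilon$ tend to $0$ along a countable sequence; the two summability conditions are equivalent.
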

Then, (\ref{m:mintDj}) can be obtained as follows.
\begin{proof}[Proof of (\ref{m:mintDj})] Observe that (\ref{m:eq:wavc-D}) and the inclusion $\nu_j(t_0)\subseteq\{0,\ldots,2^{j}-1\}$, imply that,
$$
\widetilde{D}_j\ge \max_{k\in\nu_j(t_0)} |\widetilde{d}_{j,k}|;
$$
thus, for deriving (\ref{m:mintDj}), it is sufficient to show that, 
\begin{equation}
\label{m:min1tildedjk}
\liminf_{j\rightarrow +\infty} \left\{ 2^{j(H_{*}-1/\al+\epsilon)} \max_{k\in\nu_j(t_0)} |\widetilde{d}_{j,k}| \right\}= +\infty.
\end{equation}
Since $\epsilon$ is an arbitrarily small positive real number, there exists $\delta \in (0,1/3)$ such that,
\begin{equation}
\label{m:eq1:mainlb}
\epsilon/2=2\delta/\alpha.
\end{equation}
Then, using (\ref{m:def:equa:tildelambdajt0}), (\ref{m:eq1:mainlb}), (\ref{m:rel:d:GR}), (\ref{m:def:eq:mint0}), (\ref{m:def:nujt0}), (\ref{m:def:eqIjt0}), (\ref{eqam:cond:holder}), and the triangle inequality, one has, for any large enough integer $j$,
\begin{align*}
& 2^{j(H_{*}-1/\al+\epsilon/2)} \max_{k\in\nu_j(t_0)} |\widetilde{d}_{j,k}| \ge 2^{j(H_{*}-1/\al+\epsilon)} \max_{l\in \widetilde{\lambda}_j(t_0)} |\widetilde{d}_{j,le_j}|  \\
&\ge  2^{j\frac{2\delta}{\alpha}} \max_{l\in \widetilde{\lambda}_j(t_0)} \left| 2^{-j(H(le_j2^{-j})-H(t_0))} \left\{ G_{j,le_j}+R_{j,le_j} \right\} \right| \\
& \ge 2^{-c_1} 2^{j\frac{2\delta}{\alpha}} \max_{l\in \widetilde{\lambda}_j(t_0)} \left| G_{j,le_j}+R_{j,le_j} \right| \\
& \ge 2^{-c_1} 2^{j\frac{2\delta}{\alpha}} \max_{l\in \widetilde{\lambda}_j(t_0)} \left| G_{j,le_j}\right| - 2^{-c_1} 2^{j\frac{2\delta}{\alpha}} \max_{l\in \widetilde{\lambda}_j(t_0)} \left| R_{j,le_j}\right|,
\end{align*}
where $c_1$ is the constant $c$ in (\ref{eqam:cond:holder}); thus, (\ref{m:eq0:minmaxgjl}) and (\ref{m:eq0:majmaxrjl}) imply that, 
\begin{align*}
& \liminf_{j \rightarrow +\infty} \left\{ 2^{j(H_{*}-1/\al+\epsilon/2)} \max_{k\in\nu_j(t_0)} |\widetilde{d}_{j,k}| \right\} \\
& \ge 2^{-c_1} \liminf_{j\rightarrow +\infty} \left\{2^{j\frac{2\delta}{\alpha}} \max_{l\in \widetilde{\lambda}_j(t_0)} \left| G_{j,le_j}\right| \right\} - 2^{-c_1} \limsup_{j\rightarrow +\infty} \left\{2^{j\frac{2\delta}{\alpha}} \max_{l\in \widetilde{\lambda}_j(t_0)} \left| R_{j,le_j}\right| \right\}\ge 2^{-c_1}>0,
\end{align*}
which proves (\ref{m:min1tildedjk}).
\end{proof}

Now, our goal is to show that Lemma \ref{m:prop:minmaxgjl} holds, to this end one needs two preliminary results. The following lemma, whose proof can for instance be found
in \cite{SamTaq}, is a very classical result on the asymptotic behavior of the tail of a $\sas$ distribution\footnote{We note in passing, that Lemma~\ref{m:lem:tailZ} remains valid even if one drops the assumption of the symmetry of the stable distribution.}.
\begin{Lem}
\label{m:lem:tailZ}
 Let $S$ be a $\sas$ random variable with a non-vanishing scale parameter $\|S\|_{\al}$. Then for any real number $\xi\geq \|S\|_{\al}$, one has,
\begin{equation}\label{m:eq2:minmaxgjl}
c_{1} \|S\|_{\al}^{\al} \,\xi^{-\al} \leq \PR(|S|>\xi) \leq c_{2} \|S\|_{\al}^{\al}\, \xi^{-\al},
\end{equation}
where $0<c_{1}\le c_{2}$ are two constants only depending on $\al$.
\end{Lem}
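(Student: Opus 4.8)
The plan is to reduce the statement to a single, normalized random variable by the scaling property of stable laws, to invoke the classical first-order tail asymptotics of a $\sas$ distribution, and finally to upgrade that asymptotic statement into the uniform two-sided bound claimed for every $\xi\ge\|S\|_{\al}$, by means of an elementary compactness argument.

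First I would exploit the scaling property: writing $\sigma=\|S\|_{\al}$, a comparison of characteristic functions gives $S\stackrel{d}{=}\sigma\,S_0$, where $S_0$ is \emph{the} $\sas$ random variable of unit scale parameter $\|S_0\|_{\al}=1$. Note that $S_0$ is uniquely determined by $\al$ alone, since its characteristic function is $\theta\mapsto e^{-|\theta|^{\al}}$. Consequently $\PR(|S|>\xi)=\PR(|S_0|>\xi/\sigma)$, and, putting $t=\xi/\sigma\ge 1$, the lemma becomes equivalent to the existence of two constants $0<c_1\le c_2$, depending only on $\al$, such that
\begin{equation*}
c_1\,t^{-\al}\le \PR(|S_0|>t)\le c_2\,t^{-\al}\qquad\mbox{for all }t\ge 1.
\end{equation*}

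Next I would study the single function $g(t)=t^{\al}\,\PR(|S_0|>t)$ on $[1,+\infty)$ and record three facts. (a) The function $g$ is continuous, because $t\mapsto t^{\al}$ is continuous and $S_0$ admits a continuous (indeed $\ceinf$) density, so that $t\mapsto\PR(|S_0|>t)$ is continuous. (b) One has $g(t)>0$ for every $t\ge 1$, since the density of $S_0$ is strictly positive on the whole of $\R$, whence $\PR(|S_0|>t)>0$. (c) The classical first-order tail behaviour of stable laws (see \cite{SamTaq}) yields $\lim_{t\to+\infty}g(t)=C_{\al}$, where $C_{\al}\in(0,+\infty)$ depends only on $\al$ (for $\al\ne 1$, explicitly $C_{\al}=\big(\int_0^{+\infty}x^{-\al}\sin x\,dx\big)^{-1}$).

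From (a)--(c) the conclusion will follow at once. By (c) there is $T\ge 1$ with $C_{\al}/2\le g(t)\le 2C_{\al}$ for all $t\ge T$, while on the compact interval $[1,T]$ the continuous strictly positive function $g$ attains a strictly positive minimum $m$ and a finite maximum $M$. Then the choice $c_1=\min\{m,C_{\al}/2\}>0$ and $c_2=\max\{M,2C_{\al}\}<+\infty$ gives $c_1\le g(t)\le c_2$ for every $t\ge 1$, which is exactly the displayed bound for $S_0$; substituting back $t=\xi/\sigma$ and multiplying by $\sigma^{\al}=\|S\|_{\al}^{\al}$ produces (\ref{m:eq2:minmaxgjl}), with constants depending only on $\al$. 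The only genuinely difficult ingredient is step (c): the precise asymptotics rest on inverting the characteristic function $e^{-|\theta|^{\al}}$ near the origin through a Tauberian/Abelian argument, and this is precisely the classical result imported from \cite{SamTaq}; all the remaining steps are just exact scaling and the passage, via compactness, from an asymptotic estimate to a uniform one.
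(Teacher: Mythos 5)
Your proof is correct. The paper itself does not prove this lemma at all: it simply records it as a classical fact and refers the reader to \cite{SamTaq}, so there is no ``paper proof'' to compare against step by step. Your argument supplies exactly the standard derivation one would expect behind that citation: the reduction by scaling to the unit-scale variable $S_0$ is exact (comparison of the characteristic functions $e^{-\sigma^{\al}|\theta|^{\al}}$ and $e^{-|\theta|^{\al}}$), the tail asymptotic $\lim_{t\to+\infty}t^{\al}\,\PR(|S_0|>t)=C_{\al}\in(0,+\infty)$ is Property 1.2.15 of \cite{SamTaq}, and your compactness step is the right (and necessary) device for upgrading an asymptotic equivalence into the \emph{uniform} two-sided bound valid on the whole range $\xi\ge\|S\|_{\al}$ --- a point that is often glossed over when this lemma is quoted. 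The continuity and strict positivity of $g(t)=t^{\al}\,\PR(|S_0|>t)$ on $[1,+\infty)$ that you invoke are indeed guaranteed by the smoothness and full support of symmetric stable densities for $\al\in(0,2)$. The only cosmetic caveat is the explicit value you give for $C_{\al}$: for the two-sided tail of a symmetric variable the limiting constant is the sum of the two one-sided contributions, but since all your argument needs is that the limit is a finite strictly positive constant depending only on $\al$, this does not affect the validity of the proof.
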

\begin{Lem}\label{m:lem:majsclagjl}
For each arbitrary fixed integer $j$ large enough, $\{G_{j,le_j}\,:\, l\in\widetilde{\lambda}_j(t_0) \}$ is a sequence of independent $\sas$ random variables whose scale parameters are, for all $l\in\widetilde{\lambda}_j(t_0)$, given by 
\begin{equation}
\label{m:eq:scaleG}
\big\|G_{j,le_j} \big\|_{\al}^{\al} = 2^{-j} \int_{1-e_j}^1 \big|\Phi_{\al}(x,H(le_j2^{-j}))\big|^{\al} dx;
\end{equation}
moreover, one has,
\begin{equation}\label{m:eq:borne:scaleG}
c_{1}' 2^{-j} \le \big\|G_{j,le_j} \big\|_{\al}^{\al} \le c_{2}' 2^{-j}.
\end{equation}
where $0<c_{1}'\le c_{2}'$ are two constants non depending on $j$ and $l$.
\end{Lem}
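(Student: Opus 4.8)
The plan is to establish the three assertions in turn, all of which rest on the stochastic-integral representation (\ref{m:def:Gjl}) together with the properties of the $\sas$ random measure $\Za{\cdot}$ and of the function $\Phi_\al$ obtained earlier. First I would prove the independence. For $l\in\widetilde{\lambda}_j(t_0)$, the random variable $G_{j,le_j}$ is, by (\ref{m:def:Gjl}), a stable integral of a deterministic function supported in the compact interval $[((l-1)e_j+1)2^{-j},(le_j+1)2^{-j}]$, whose length is $e_j2^{-j}$. If $l<l'$ are two distinct indices of $\widetilde{\lambda}_j(t_0)$, then $l'\ge l+1$, so the left endpoint $((l'-1)e_j+1)2^{-j}$ of the domain attached to $l'$ is at least $(le_j+1)2^{-j}$, the right endpoint of the domain attached to $l$; hence these two domains are disjoint up to their (at most) common endpoint, that is, up to a Lebesgue-null set. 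Since $\{\Za{s}\,:\,s\in\R\}$ is a L\'evy process, the $\sas$ random measure $\Za{\cdot}$ is independently scattered, and therefore stable integrals over essentially disjoint domains are independent; this yields the independence of $\{G_{j,le_j}\,:\,l\in\widetilde{\lambda}_j(t_0)\}$ for each fixed $j$. Let me stress that it is precisely the shift by $1$ built into the lower integration limit of (\ref{m:def:Gjl}) that guarantees this disjointness.

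Next, the scale-parameter formula (\ref{m:eq:scaleG}) follows by applying (\ref{eqa:scparamS}) to (\ref{m:def:Gjl}), which gives $\|G_{j,le_j}\|_\al^\al=\int_{((l-1)e_j+1)2^{-j}}^{(le_j+1)2^{-j}}\big|\Phi_\al(2^js-le_j,H(le_j2^{-j}))\big|^\al\,ds$, and then performing the change of variable $x=2^js-le_j$ (so that $ds=2^{-j}\,dx$ and the limits become $1-e_j$ and $1$).

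Finally, for the two-sided bound (\ref{m:eq:borne:scaleG}) I would argue as follows. The upper bound is immediate: by the support property and the localization estimate of Part $(ii)$ of Proposition~\ref{h:prop:proprietephial}, one has, uniformly in $v\in[\underline{H},\overline{H}]$, $\int_{1-e_j}^1|\Phi_\al(x,v)|^\al\,dx\le\int_{-\infty}^1|\Phi_\al(x,v)|^\al\,dx=\|\Phi_\al(\cdot,v)\|_{\La(\R)}^\al$, which is bounded above by a constant thanks to Part $(iii)$; taking $v=H(le_j2^{-j})\in[\underline{H},\overline{H}]$ gives the constant $c_2'$. The lower bound is the delicate point and the main obstacle. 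Here I would write $\int_{1-e_j}^1|\Phi_\al(x,v)|^\al\,dx=\|\Phi_\al(\cdot,v)\|_{\La(\R)}^\al-\int_{-\infty}^{1-e_j}|\Phi_\al(x,v)|^\al\,dx$, bound the first term below by $c_2^\al>0$ (Part $(iii)$, see (\ref{h:minphial})), and control the tail via the localization estimate of Part $(ii)$: for $x\le 1-e_j$ with $e_j\ge 2$ one has $1+|x|=1-x$, whence $\int_{-\infty}^{1-e_j}|\Phi_\al(x,v)|^\al\,dx\le c\int_{e_j}^{+\infty}u^{-\al(2+1/\al-\overline{H})}\,du$ for a constant $c$ not depending on $v$ or $l$. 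Since $\al(2+1/\al-\overline{H})>1$, this tail tends to $0$ as $e_j\to+\infty$, uniformly in $v$ and $l$; and because $e_j=[2^{j\delta}]\to+\infty$, it is at most $\frac{1}{2} c_2^\al$ for all $j$ large enough, so that $\int_{1-e_j}^1|\Phi_\al(x,v)|^\al\,dx\ge\frac{1}{2} c_2^\al$, which gives the constant $c_1'$. The crux is thus that truncating $\Phi_\al(\cdot,v)$ to $[1-e_j,1]$ loses only a vanishing fraction of its $\La(\R)$-mass, and that this loss is controlled uniformly in the Hurst value $v=H(le_j2^{-j})$ and in the index $l$; the localization bound of Proposition~\ref{h:prop:proprietephial}$(ii)$ combined with $e_j\to+\infty$ is exactly what makes this uniform.
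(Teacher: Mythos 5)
Your proposal is correct and follows essentially the same route as the paper: independence from the disjointness of the integration intervals and the independently scattered nature of the stable measure, the scale formula via (\ref{eqa:scparamS}) and the change of variable $x=2^js-le_j$, and the two-sided bound by comparing $\int_{1-e_j}^{1}|\Phi_\al(x,v)|^{\al}\,dx$ with the full norm $\|\Phi_\al(\cdot,v)\|_{\La(\R)}^{\al}$ and controlling the tail $\int_{-\infty}^{1-e_j}$ through the localization estimate of Proposition~\ref{h:prop:proprietephial}$(ii)$, which decays like $e_j^{-\al(2-\overline{H})}\to 0$. The only cosmetic difference is that the paper introduces $c_3'=\min_{v}\int_{-\infty}^{1}|\Phi_\al(x,v)|^{\al}\,dx$ explicitly whereas you invoke the constant $c_2$ of (\ref{h:minphial}); since $\supp\Phi_\al(\cdot,v)\subseteq(-\infty,1]$, these coincide.
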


\begin{proof}[Proof of Lemma \ref{m:lem:majsclagjl}]
First observe that the independence of the $\sas$ random variables $G_{j,le_j}$, $l\in\widetilde{\lambda}_j(t_0)$, is a consequence of the fact that they are defined by stable stochastic integrals (see (\ref{m:def:Gjl})) over the disjoint intervals $\big((l-1)e_j+1)2^{-j},(le_j+1)2^{-j}\big]$, $l\in\widetilde{\lambda}_j(t_0)$. Next, observe that, in view of (\ref{m:def:Gjl}) and of (\ref{eqa:scparamS}), $\big\|G_{j,le_j} \big\|_{\al}^{\al}$ can be expressed as, 
$$
\big\|G_{j,le_j} \big\|_{\al}^{\al}=\int_{((l-1)e_j+1)2^{-j}}^{(le_j+1)2^{-j}} \big|\Phi_{\al}(2^js-le_j,H(le_j2^{-j}))\big|^{\al} ds;
$$
thus, the change of variable $x=2^js-le_j$, allows to obtain (\ref{m:eq:scaleG}). Let us now show that (\ref{m:eq:borne:scaleG}) holds, to this end, one sets,
\begin{equation}
\label{eqa:defcts1}
c_2'=\max_{v\in [\underline{H},\overline{H}]}\int_{-\infty}^{1} \big|\Phi_{\al}(x,v)\big|^{\al} dx,\quad c_3'=\min_{v\in[\underline{H},\overline{H}]}\int_{-\infty}^{1} \big|\Phi_{\al}(x,v)\big|^{\al} dx,\quad\mbox{and}\quad c_1' =2^{-1}c_3';
\end{equation}
one recalls, in passing, that the range of the function $H(\cdot)$ is included in the interval $[\underline{H},\overline{H}]$. Notice that, it follows from (\ref{eqa:defcts1}) as well as from Parts $(ii)$ and $(iii)$ of Proposition~\ref{h:prop:proprietephial}, that $0<c_1'<c_3'\le c_2' <+\infty$. Next, combining (\ref{m:eq:scaleG}) with the first equality in (\ref{eqa:defcts1}), one can easily gets the second inequality in (\ref{m:eq:borne:scaleG}). On the other hand, (\ref{m:eq:scaleG}), the second equality in (\ref{eqa:defcts1}), and (\ref{h:local:phial}), imply that,
\begin{align}
\label{eqa:minor1}
\big\|G_{j,le_j} \big \|_{\al}^{\al} & \ge 2^{-j}\left(c_3'-\int_{-\infty}^{1-e_j} \big|\Phi_{\al}(x,H(le_j2^{-j}))\big|^{\al} dx \right)\nonumber\\
& \ge 2^{-j}\left( c_{3}' -c_{4}'\int_{-\infty}^{1-e_j} \big(1-x\big)^{-(2\al+1-\al\overline{H})}\right)\nonumber\\
& \ge 2^{-j}\left( c_{3}' -c_{5}' e_{j}^{-\al(2-\overline{H})}\right),
\end{align}
where $c_4'>0$ and $c_5'>0$ are two constants non depending on $j$. Finally, putting together (\ref{eqa:minor1}), the third equality in (\ref{eqa:defcts1}), and the fact that $e_j$ is a non-decreasing sequence which goes to $+\infty$, one obtains the first inequality in (\ref{m:eq:borne:scaleG}).
\end{proof}

We are now in position to prove Lemma \ref{m:prop:minmaxgjl}.

\begin{proof}[Proof of Lemma \ref{m:prop:minmaxgjl}]
Let $j$ be an arbitrary integer, large enough, so that the set $\lambda_j(t_0)$ is non-empty and the inequality, 
\begin{equation}
\label{eqa:maxscaleG}
2^{-j \frac{2\delta}{\al}}\ge\max_{l\in\widetilde{\lambda}_j(t_0) } \|G_{j,le_j}\|_{\al},
\end{equation}
holds; notice that in view of the second inequality in (\ref{m:eq:borne:scaleG}), and of the fact that $\delta\in (0,1/3)$, one can assume that (\ref{eqa:maxscaleG}) is satisfied.
Next, using the fact that $\{G_{j,le_j}\,;\, l\in\widetilde{\lambda}_j(t_0) \}$ is a finite sequence of independent random variables, one gets that,
\begin{align*}
& \PR\left(\max_{l\in\widetilde{\lambda}_j(t_0) } |G_{j,le_j}| \le 2^{-j \frac{2\delta}{\al} } \right) = \prod_{l\in\widetilde{\lambda}_j(t_0)} \PR\left(|G_{j,le_j}| \le 2^{-j \frac{2\delta}{\al} } \right)  \\
& = \prod_{l\in\widetilde{\lambda}_j(t_0)} \Bigg( 1 - \PR\left(|G_{j,le_j}| > 2^{-j \frac{2\delta}{\al} } \right) \Bigg). 
\end{align*}
Moreover, thanks to (\ref{eqa:maxscaleG}), one is allowed to apply the first inequality in (\ref{m:eq2:minmaxgjl}), in the case where $\xi=2^{-j \frac{2\delta}{\al}}$ and $S=G_{j,le_j}$, $l\in\widetilde{\lambda}_j(t_0)$ being arbitrary; thus, one obtains that,
\begin{align}
\label{m:eq3:minmaxgjl}
\PR\left(\max_{l\in\widetilde{\lambda}_j(t_0) } |G_{j,le_j}| \le 2^{-j \frac{2\delta}{\al} } \right) & \le \prod_{l\in\widetilde{\lambda}_j(t_0)} \left( 1 - c_1\|G_{j,le_j}\|_{\al}^{\al}\, 2^{j 2\delta}\right)\nonumber\\
& \le \left(1-  c_{2} 2^{-j(1-2\delta)}\right)^{\mathrm{card}(\widetilde{\lambda}_j(t_0))},
\end{align}
where, $c_1>0$ is the same constant as in (\ref{m:eq2:minmaxgjl}), and $c_2\in (0,1)$ is a constant non depending on $j$; observe that the last inequality in (\ref{m:eq3:minmaxgjl}), results from the first inequality in (\ref{m:eq:borne:scaleG}).

Finally, putting together (\ref{m:eq3:minmaxgjl}), (\ref{m:eq1:card:tildelambdajt0}) and (\ref{m:eq:egalitecard}), it follows that,
$$
\sum_{j=j_1}^{+\infty} \,\PR\left(\max_{l\in\widetilde{\lambda}_j(t_0) } |G_{j,le_j}| \le 2^{-j \frac{2\delta}{\al} } \right) <+\infty,
$$
where $j_1$ denotes a fixed large enough integer; then, applying Borel-Cantelli Lemma, one gets (\ref{m:eq0:minmaxgjl}).
\end{proof}

In order to prove Lemma~\ref{m:prop:majmaxrjl}, one needs the following result. 

\begin{Lem}\label{m:lem:majsclarjl}
For each integer $j$ large enough, and for every $l\in \widetilde{\lambda}_j(t_0)$, the scale parameter $\|R_{j,le_j} \|_{\al}$ of $R_{j,le_j}$, the $\sas$ random variable introduced in (\ref{m:def:Rjl}), satisfies
\begin{equation}
\label{m:eq:scaleR}
\|R_{j,le_j} \|_{\al}^{\al} = 2^{-j} \int_{-\infty}^{1-e_j} \big|\Phi_{\al}(x,H(le_j2^{-j}))\big|^{\al} dx\leq c \,2^{-j\al(2\delta +1/\al-\delta \overline{H})},
\end{equation}
where $c>0$ is a constant non depend on $j$ and $l$.
\end{Lem}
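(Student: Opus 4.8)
The plan is to proceed exactly as in the proof of Lemma~\ref{m:lem:majsclagjl}: first turn the scale parameter into an explicit integral, then control its tail through the localization of $\Phi_{\al}$. First I would apply property (\ref{eqa:scparamS}) to the stable stochastic integral (\ref{m:def:Rjl}), which gives
\[
\|R_{j,le_j}\|_{\al}^{\al}=\int_{-\infty}^{((l-1)e_j+1)2^{-j}}\big|\Phi_{\al}(2^js-le_j,H(le_j2^{-j}))\big|^{\al}\,ds.
\]
Performing the change of variable $x=2^js-le_j$ (so that $ds=2^{-j}\,dx$ and the upper endpoint $s=((l-1)e_j+1)2^{-j}$ becomes $x=1-e_j$) yields the announced equality
\[
\|R_{j,le_j}\|_{\al}^{\al}=2^{-j}\int_{-\infty}^{1-e_j}\big|\Phi_{\al}(x,H(le_j2^{-j}))\big|^{\al}\,dx.
\]

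Next I would bound the remaining integral. Since $H(le_j2^{-j})\in[\underline{H},\overline{H}]$ and, for $j$ large enough, $e_j\ge 2$ so that $1-e_j\le -1$, every $x$ in the domain of integration satisfies $x\le 1$; hence the well-localization estimate in (\ref{h:local:phial}) applies and gives $|\Phi_{\al}(x,H(le_j2^{-j}))|^{\al}\le c_1^{\al}(1-x)^{-(2\al+1-\al\overline{H})}$, which is precisely the bound already exploited in (\ref{eqa:minor1}). Integrating it, and using that $\overline{H}<1<2$ guarantees the convergence of $\int_{e_j}^{+\infty}u^{-(2\al+1-\al\overline{H})}\,du$, I obtain
\[
\int_{-\infty}^{1-e_j}\big|\Phi_{\al}(x,H(le_j2^{-j}))\big|^{\al}\,dx\le c'\,e_j^{-\al(2-\overline{H})},
\]
for some constant $c'>0$ independent of $j$ and $l$.

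Finally I would translate the $e_j$-power into a power of $2$. Recalling from (\ref{m:eq:add1}) that $e_j=[2^{j\delta}]$, one has $e_j\ge 2^{j\delta-1}$ for $j$ large, whence $e_j^{-\al(2-\overline{H})}\le 2^{\al(2-\overline{H})}\,2^{-j\delta\al(2-\overline{H})}$. Multiplying by the prefactor $2^{-j}$ and invoking the algebraic identity
\[
1+\delta\al(2-\overline{H})=\al\big(2\delta+1/\al-\delta\overline{H}\big)
\]
gives exactly $\|R_{j,le_j}\|_{\al}^{\al}\le c\,2^{-j\al(2\delta+1/\al-\delta\overline{H})}$, as claimed. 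No step here is genuinely difficult; the only points requiring care are the bookkeeping of the exponent, namely checking the identity above, and confirming that $\overline{H}<2$ makes the tail integral converge, so that the gain coming from the growing lower endpoint $e_j$ is correctly transferred into an extra decay in $2^{-j}$.
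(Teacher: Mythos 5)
Your proof is correct and follows essentially the same route as the paper: the identity via (\ref{eqa:scparamS}) and the change of variable $x=2^js-le_j$, then the tail bound from the localization estimate (\ref{h:local:phial}) integrated over $(-\infty,1-e_j]$, and finally the conversion $e_j=[2^{j\delta}]\ge 2^{j\delta-1}$ together with the exponent identity $1+\al\delta(2-\overline{H})=\al(2\delta+1/\al-\delta\overline{H})$. Nothing to add.
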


\begin{proof}[Proof of Lemma \ref{m:lem:majsclarjl}]
First observe that in view of (\ref{m:def:Rjl}) and of (\ref{eqa:scparamS}), $\|R_{j,le_j} \|_{\al}^{\al}$ can be expressed as,
\begin{equation*}
\|R_{j,le_j} \|_{\al}^{\al} = \int_{-\infty}^{((l-1)e_j+1)2^{-j}}\big|\Phi_{\al}(2^js-le_j,H(le_j2^{-j}))\big|^{\al} ds,
\end{equation*}
thus, the change of variable $u=2^js-le_j$, allows to obtain the equality in (\ref{m:eq:scaleR}). Then, using the latter equality, (\ref{h:local:phial}) and (\ref{m:eq:add1}), it follows that the inequality in (\ref{m:eq:scaleR}) holds, more precisely, one has,
$$
\|R_{j,le_j} \|_{\al}^{\al}  \le c_1 2^{-j} \int_{-\infty}^{1-e_j} (1-u)^{-(2\al+1-\al\overline{H})} du \le c_1  2^{-j} \int_{2^{j\delta}-2}^{+\infty} (1+u)^{-(2\al+1-\al \overline{H})} du \le c_{2} 2^{-j\al(2\delta+1/\al-\delta\overline{H})},
$$
where $c_1>0$ and $c_2>0$ are two constants non depending on $j$.
\end{proof}

We are now in position to prove Lemma \ref{m:prop:majmaxrjl}.

\begin{proof}[Proof of Lemma \ref{m:prop:majmaxrjl}]
First observe that for all fixed arbitrarily small $\eta>0$, one has,  
\begin{equation}
\label{eqa:inegd1}
\frac{2\delta+\eta}{\al}<2\delta+1/\al-\delta \overline{H},
\end{equation}
since $\delta \in (0,1/3)$, $\al\in (1,2)$ and  $\overline{H}\in (0,1)$. Next, combining (\ref {m:eq:scaleR}) with (\ref{eqa:inegd1}), it follows that the inequality,
\begin{equation}
\label{eqa:inegd2}
2^{-j\big (\frac{2\delta+\eta}{\al}\big)}\ge \max_{l\in\widetilde{\lambda}_j(t_0) }\|R_{j,le_j} \|_{\al},
\end{equation}
holds for all $j$ big enough. Thanks to (\ref{eqa:inegd2}), one is allowed to apply the second inequality in (\ref{m:eq2:minmaxgjl}), in the case where $\xi=2^{-j\big (\frac{2\delta+\eta}{\al}\big)}$ and $S=R_{j,le_j}$, $l\in\widetilde{\lambda}_j(t_0)$ being arbitrary; thus, one obtains that,
\begin{align}
\label{m:eq1:majmaxrjl}
& \PR\Bigg(\max_{l\in\widetilde{\lambda}_j(t_0) } |R_{j,le_j}| > 2^{-j\big(\frac{2\delta+\eta}{\al}\big)} \Bigg) \le \sum_{l\in\widetilde{\lambda}_j(t_0)} \PR\Bigg(|R_{j,le_j}| > 2^{-j\big(\frac{2\delta+\eta}{\al}\big)} \Bigg) \nonumber \\
& \le c_{2} \sum_{l\in\widetilde{\lambda}_j(t_0)} \|R_{j,le_j} \|_{\al}^{\al}\, 2^{j(2\delta+\eta)},
\end{align}
where $c_{2}>0$ is the same constant as (\ref{m:eq2:minmaxgjl}). Next, putting together (\ref{m:eq1:majmaxrjl}), the inequality in (\ref{m:eq:scaleR}), (\ref{m:eq:egalitecard}) and (\ref{m:eq1:card:tildelambdajt0}), one gets that,
\begin{equation}\label{m:eq2:majmaxrjl}
\PR\Bigg(\max_{l\in\widetilde{\lambda}_j(t_0) } |R_{j,le_j}| > 2^{-j\big(\frac{2\delta+\eta}{\al}\big)} \Bigg) \le c_3 j^{-\frac{1}{\rho_H}} 2^{-j\al(2\delta+1/\al-\delta \overline{H})}\, 2^{j(1+\delta+\eta)},
\end{equation}
where $c_3>0$ is a constant non depending on $j$. Moreover, the fact $\eta$ is arbitrarily small, allows to assume that $\delta(\al-1) > \eta$; thus, one obtains 
\begin{equation}
\label{eqa:inegd3}
\al(2\delta+1/\al-\delta\overline{H}) > \al(\delta+1/\al)=\al\delta+1 > 1+\delta +\eta.
\end{equation}
Finally, combining (\ref{m:eq2:majmaxrjl}) with (\ref{eqa:inegd3}), it follows that, 
\begin{equation*}
\sum_{j=j_1}^{+\infty} \PR\Bigg(\max_{l\in\widetilde{\lambda}_j(t_0) } |R_{j,le_j}| > 2^{-j\big(\frac{2\delta+\eta}{\al}\big)} \Bigg) < +\infty,
\end{equation*}
where $j_1$ denotes a fixed large enough integer; then, applying Borel-Cantelli Lemma, one gets (\ref{m:eq0:majmaxrjl}).
\end{proof}

\begin{Rem}\label{m:rem:unif-event}
Our proofs of Lemmas~\ref{m:prop:minmaxgjl}~and~\ref{m:prop:majmaxrjl}, only allow to derive that Relations (\ref{m:eq0:minmaxgjl})~and~(\ref{m:eq0:majmaxrjl}) hold on 
some event of probability $1$, denoted by $\widetilde{\Omega}_\delta$, since it a priori depends on $\delta\in (0,1/3)$. Yet, one can easily show that these two relations
also hold, for every real number $\delta\in (0,1/3)$, on an event of probability $1$ which does not depend on $\delta$, namely the event $\bigcap_{\delta\in\Q\cap
(0,1/3)} \widetilde{\Omega}_\delta$.
\end{Rem}
\vspace{0.5cm}
\noindent{\bf Aknowledgment.} This work has been partially supported by CEMPI (ANR-11-LABX-0007-01). 	
\thispagestyle{plain}
\bibliographystyle{amsplain}
\bibliography{mabibliodethese}

\providecommand{\bysame}{\leavevmode\hbox to3em{\hrulefill}\thinspace}
\providecommand{\MR}{\relax\ifhmode\unskip\space\fi MR }
\providecommand{\MRhref}[2]{%
  \href{http://www.ams.org/mathscinet-getitem?mr=#1}{#2}
}
\providecommand{\href}[2]{#2}
\begin{thebibliography}{10}

\bibitem{abry1999estimation}
P.~Abry, B.~Pesquet-Popescu, and M.~S. Taqqu, \emph{Estimation ondelette des
  param\`etres de stabilit\'e et d'autosimilarit\'e des processus
  $\alpha$-stables autosimilaires}, 17\`eme Colloque sur le traitement du
  signal et des images, FRA, 1999, GRETSI, Groupe d'Etudes du Traitement du
  Signal 

\bibitem{hamonier2012lfsm}
A.~Ayache and J.~Hamonier, \emph{Linear fractional stable motion: a wavelet
  estimator of the $\al$ parameter}, Statistics and Probability Letters
  \textbf{82} (2012), 1569--1575.

\bibitem{hamonier2012lmsm}
\bysame, \emph{Linear multifractional stable motion: fine path properties},
  Preprint ArXiv:1302.1670 (2013).

\bibitem{bardet2010nonparametric}
J.~M. Bardet and D.~Surgailis, \emph{Nonparametric estimation of the local
  {H}urst function of multifractional processes}, Stochastic Processes and
  their Applications \textbf{123} (2013), no.~3, 1004--1045.

\bibitem{benassi1998identifying}
A.~Benassi, S.~Cohen, and J.~Istas, \emph{Identifying the multifractional
  function of a gaussian process}, Statistics and Probability Letters
  \textbf{39} (1998), no.~4, 337--345.

\bibitem{coeurjolly2005identification}
J.F. Coeurjolly, \emph{Identification of multifractional brownian motion},
  Bernoulli \textbf{11} (2005), no.~6, 987--1008.

\bibitem{coeurjolly2006erratum}
\bysame, \emph{Erratum: Identification of multifractional brownian motion},
  Bernoulli \textbf{12} (2006), no.~2, 381--382.

\bibitem{Dau92}
I.~Daubechies, \emph{Ten lectures on wavelets}, vol.~61, Society for Industrial
  Mathematics, 1992.

\bibitem{delbeke98}
L.~Delbeke, \emph{Wavelet based estimators for the {H}urst parameter of a
  self-similar process}, Ph.D. thesis, KU Leuven, Belgium, 1998.

\bibitem{delbeke2000stochastic}
L.~Delbeke and P.~Abry, \emph{Stochastic integral representation and properties
  of the wavelet coefficients of linear fractional stable motion}, Stochastic
  Processes and their Applications \textbf{86} (2000), no.~2, 177--182.

\bibitem{EmMa}
P.~Embrechts and M.~Maejima, \emph{Self-similar processes}, Academic Press,
  2003.

\bibitem{falconer2002tangent}
K.~J. Falconer, \emph{Tangent fields and the local structure of random fields},
  Journal of Theoretical Probability \textbf{15} (2002), no.~3, 731--750.

\bibitem{falconer2003local}
\bysame, \emph{The local structure of random processes}, Journal of the London
  Mathematical Society \textbf{67} (2003), no.~3, 657--672.

\bibitem{leguevel2010}
R.~Le~Gu\'evel, \emph{An estimation of the stability and the localisability
  functions of multistable processes}, Preprint ArXiv:1005.1519 (2010).

\bibitem{ayache2011prostate}
R.~Lopes, A.~Ayache, N.~Makni, P.~Puech, A.~Villers, S.~Mordon, and
  N.~Berrouni, \emph{Prostate cancer characterization on mr images using
  fractal features}, Medical Physics \textbf{38} (2011), 83--95.

\bibitem{peng11}
Q.~Peng, \emph{Inf\'erence statistique pour des processus multifractionnaires
  cach\'es dans un cadre de mod\`eles \`a volatilit\'e stochastique}, Ph.D.
  thesis, Universit\'e Lille 1, 2011.

\bibitem{pipiras2007bounds}
V.~Pipiras, M.~S. Taqqu, and P.~Abry, \emph{Bounds for the covariance of
  functions of infinite variance stable random variables with applications to
  central limit theorems and wavelet-based estimation}, Bernoulli \textbf{13}
  (2007), no.~4, 1091--1123.

\bibitem{samk}
S.G. Samko, A.A. Kilbas, and O.I. Marichev, \emph{Fractional integrals and
  derivatives: theory and applications}, Gordon and Breach Science Publishers,
  1993.

\bibitem{SamTaq}
G.~Samorodnitsky and M.~S. Taqqu, \emph{Stable non-gaussian random variables},
  Chapman and Hall, London, 1994.

\bibitem{SPT2002}
S.~Stoev, V.~Pipiras, and M.~S. Taqqu, \emph{Estimation of the self-similarity
  parameter in linear fractional stable motion}, Signal Processing \textbf{82}
  (2002), 1873--1901.

\bibitem{stoev2004stochastic}
S.~Stoev and M.~S. Taqqu, \emph{Stochastic properties of the linear
  multifractional stable motion}, Advances in applied probability \textbf{36}
  (2004), no.~4, 1085--1115.

\end{thebibliography}
\end{document}